\newtheorem{theorem}{Theorem}[section]
\newtheorem{lemma}[theorem]{Lemma} 
\newtheorem{proposition}[theorem]{Proposition} 
\newtheorem{corollary}[theorem]{Corollary}
\newlength{\noteWidth}
\long\def\notes#1{\ifinner
	{\footnotesize #1}
	\else 
	\marginpar{\parbox[t]{\noteWidth}{\raggedright\tiny#1}}  
	\fi\typeout{#1}}
\def\Lyapsol{\sf M}
\def\pmax{p^{\text{\tiny\sf max}}}
\def\thbias{\upnu}
\Crefname{corollary}{Corollary}{Corollaries}
\Crefname{eqnarray}{eq.}{eqs.}
\Crefname{equation}{eq.}{eqs.}
\Crefname{figure}{Fig.}{Figs.}
\Crefname{tabular}{Tab.}{Tabs.}
\Crefname{table}{Tab.}{Tabs.}
\Crefname{proposition}{Prop.}{Propositions}
\Crefname{theorem}{Thm.}{Thms.}
\Crefname{definition}{Def.}{Defs.} 
\Crefname{section}{Section}{Sections}
\Crefname{lemma}{Lemma}{Lemmas}
\Crefname{assumption}{Assumption}{Assumptions}
\def\urls#1{{\footnotesize\url{#1}}}
 \def\baru{\bar{u}}
\def\SigmaCLT{\Sigma_{\text{\tiny \sf CLT}}}
\def\tilthetaPR{\tilde{\theta}^{\text{\tiny\sf  PR}}}
\def\thetaPR{\theta^{\text{\tiny\sf PR}}}
\def\sigmaPR{\sigma^{\text{\tiny\sf PR}}}
 \def\SigmaTheta{\Sigma_{\uptheta}}
\def\SigmaPR{\Sigma^{\text{\tiny\sf PR}}}
\def\whamb{\wham{$\bullet$}}
\def\bdd#1{b^{\text{\rm\tiny\ref{#1}}}}
\def\bdde#1{\varrho^{\text{\rm\tiny\ref{#1}}}}
\def\barM{\widebar{M}}
\def\MD{\clW}
\def\clT{\mathcal{T}}
\def\clD{\mathcal{D}}
\def\barG{\bar{G}}
\def\mindex#1{\index{#1}}
\def\ocp{*}   
 \def\baru{\bar{u}}
\DeclareFontFamily{U}{mathx}{\hyphenchar\font45}
\DeclareFontShape{U}{mathx}{m}{n}{<-> mathx10}{}
\DeclareSymbolFont{mathx}{U}{mathx}{m}{n}
\DeclareMathAccent{\widebar}{0}{mathx}{"73}
\def\barUpupsilon{\widebar{\Upupsilon}}
\def\odestate{\upvartheta}
\newcommand{\bbblot}{\raise1pt\hbox{\vrule height .4ex width .4ex depth .05ex}}
\long\def\defbox#1{\framebox[.9\hsize][c]{\parbox{.85\hsize}{%
\parindent=0pt
\baselineskip=12pt plus .1pt      
\parskip=6pt plus 1.5pt minus 1pt 
 #1}}}
\long\def\beginbox#1\endbox{\subsection*{}%
\hbox{\hspace{.05\hsize}\defbox{\medskip#1\bigskip}}%
\subsection*{}}
\def\endbox{}
 \def\archival#1{} 
\def\FRAC#1#2#3{\genfrac{}{}{}{#1}{#2}{#3}}
\def\ddt{{\mathchoice{\FRAC{1}{d}{dt}}%
{\FRAC{1}{d}{dt}}%
{\FRAC{3}{d}{dt}}%
{\FRAC{3}{d}{dt}}}}
\def\ddtp{{\mathchoice{\FRAC{1}{d^{\hbox to 2pt{\rm\tiny +\hss}}}{dt}}%
{\FRAC{1}{d^{\hbox to 2pt{\rm\tiny +\hss}}}{dt}}%
{\FRAC{3}{d^{\hbox to 2pt{\rm\tiny +\hss}}}{dt}}%
{\FRAC{3}{d^{\hbox to 2pt{\rm\tiny +\hss}}}{dt}}}}
\def\ddyp{{\mathchoice{\FRAC{1}{d^{\hbox to 2pt{\rm\tiny +\hss}}}{dy}}%
{\FRAC{1}{d^{\hbox to 2pt{\rm\tiny +\hss}}}{dy}}%
{\FRAC{3}{d^{\hbox to 2pt{\rm\tiny +\hss}}}{dy}}%
{\FRAC{3}{d^{\hbox to 2pt{\rm\tiny +\hss}}}{dy}}}}
\def\half{{\mathchoice{\FRAC{1}{1}{2}}%
{\FRAC{1}{1}{2}}%
{\FRAC{3}{1}{2}}%
{\FRAC{3}{1}{2}}}}
\def\state{{\sf X}}
\def\bx{{{\cal B}(\state)}}
\def\bx{{{\cal B}(\state)}}
\def\bfmath#1{{\mathchoice{\mbox{\boldmath$#1$}}%
{\mbox{\boldmath$#1$}}%
{\mbox{\boldmath$\scriptstyle#1$}}%
{\mbox{\boldmath$\scriptscriptstyle#1$}}}}
\def\bfPhi{\bfmath{\Phi}}
\def\bfPsi{\bfmath{\Psi}}
\def\bfDelta{\bfmath{\Delta}}
\def\bfmY{\bfmath{Y}}
\def\bfmhhaY{\bfmath{\hhaY}} 
\def\bfmhhaY{\hbox to 0pt{$\widehat{\bfmY}$\hss}\widehat{\phantom{\raise 1.25pt\hbox{$\bfmY$}}}}
\def\haf{{\hat f}}
\def\hag{{\hat g}}
\def\hah{{\hat h}}
\def\haA{\widehat A}
\def\tiltheta{{\tilde \theta}}
\def\tilg{\tilde g}
\def\clA{{\cal A}}
\def\clB{{\cal B}}
\def\clE{{\cal E}}
\def\clF{{\cal F}}
\def\clR{{\cal R}}
\def\clV{{\cal V}}
\def\clW{{\cal W}}
\def\clZ{{\cal Z}}
\def\eqdef{\mathbin{:=}}
\def\Prob{{\sf P}}
\def\Expect{{\sf E}}
\def\Cov{\hbox{\sf Cov}}
\def\lgmath#1{{\mathchoice{\mbox{\large #1}}%
{\mbox{\large #1}}%
{\mbox{\tiny #1}}%
{\mbox{\tiny #1}}}}
\def\Zero{{\mathchoice{\lgmath{\sf 0}}%
{\mbox{\sf 0}}%
{\mbox{\tiny \sf 0}}%
{\mbox{\tiny \sf 0}}}}
\def\as{{\rm a.s.}}
\def\ind{\bbbone}
 \def\epsy{\varepsilon}
\def\varble{\,\cdot\,}
\def\formtmp#1#2{{\vskip12pt\noindent\fboxsep=0pt\colorbox{#1}{\vbox{\vskip3pt\hbox to \textwidth{\hskip3pt\vbox{\raggedright\noindent\textbf{#2\vphantom{Qy}}}\hfill}\vspace*{3pt}}}\par\vskip2pt%
\noindent\kern0pt}}
\titleformat\subparagraph[runin]
\titlespacing\subparagraph{0pt}
                       {.1ex minus 0.2ex}
                       {.75em}
\def\barb{{\overline {b}}}
\def\barf{{\widebar{f}}}
\def\barg{{\widebar{g}}}
\def\barh{{\overline {h}}}
\def\barx{{\overline {x}}}
\def\barA{{\bar{A}}}
\def\baralpha{{\bar{\alpha}}}
\def\ass(#1:#2){(#1\ref{#1:#2})}
\def\ritem#1{
\item[{\sf \ass(\current_model:#1)}]
}
\newenvironment{recall-ass}[1]{%
\begin{description}
\def\current_model{#1}}{
\end{description}
}
\def\sq{\hbox{\rlap{$\sqcap$}$\sqcup$}}
\def\qed{\ifmmode\sq\else{\unskip\nobreak\hfil
\penalty50\hskip1em\null\nobreak\hfil\sq
\parfillskip=0pt\finalhyphendemerits=0\endgraf}\fi}
\newcommand{\blot}{\vrule height 1.1ex width .9ex depth -.1ex }
\def\qedb{\ifmmode\blot\else{\vspace{-.2cm}\unskip\nobreak\hfil
\penalty50\hskip1em\null\nobreak\hfil\blot
\parfillskip=0pt\finalhyphendemerits=0\endgraf}\fi}
\newtheoremstyle{example}{15pt}{20pt}%
     {}
     {}
     {\bfseries}
     {}
     {1pt}
     {\thmname{#1}\thmnumber{ #2.}~\thmnote{\textit{\textbf{#3}}}%
     \\[.15cm]\unskip\nobreak}
\newcounter{rmnum}
\newenvironment{romannum}{\begin{list}{{\upshape (\roman{rmnum})}}{\usecounter{rmnum}
\setlength{\leftmargin}{8pt}
\setlength{\rightmargin}{8pt}
\setlength{\itemindent}{2pt}
}}{\end{list}}
\newcounter{anum}
\newcommand{\field}[1]{\mathbb{#1}}
\def\Re{\field{R}}
\def\Prob{{\sf P}}
\def\Expect{{\sf E}}
\def\transpose{{\intercal}}
\def\ind{\hbox{\large \bf 1}}
\def\trace{\hbox{\rm tr\,}}  
\def\epsy{\varepsilon}
\def\varble{\,\cdot\,}
\def\haY{\widehat{Y}}
\def\hhaY{\hbox to 0pt{$\haY$\hss}\widehat{\phantom{\raise 1.25pt\hbox{Y}}}}
\def\hab{\widehat b}
\def\haA{\widehat A}
\def\haG{{\widehat G}}
\def\haM{{\widehat M}}
\def\haY{\widehat Y}
\def\bfPhi{\bfmath{\Phi}}
\newlength{\dhatheight}
\def\barUpupsilon{\widebar{\Upupsilon}}
 \def\bdds#1{\clE^{\text{\rm\tiny\ref{#1}}}}
\def\bdde#1{\varrho^{\text{\rm\tiny\ref{#1}}}}
\newcommand\gobblepars{%
	\@ifnextchar\par%
	{\expandafter\gobblepars\@gobble}%
	{}}
\def\whamit#1{\smallbreak\pagebreak[3]%
	\noindent\textit{#1}\ \ \gobblepars}
\def\wham#1{\smallbreak\pagebreak[3]%
	\noindent\textbf{#1}\ \ \gobblepars}
\def\whamrm#1{\smallbreak\pagebreak[3]%
	\noindent{{\upshape\rm#1}}\ \ \gobblepars}
 \def\tilthetaPRi{\tilde{\theta}^{{\text{\tiny\sf  PR}^i } }}
\title{Revisiting Step-Size Assumptions in Stochastic Approximation}
\author{Caio Kalil Lauand and Sean Meyn
	\thanks{This work was supported by ARO award W911NF2010055,  and NSF grants  
			CCF 2306023,
			EPCN 1935389.} 
	\thanks{Caio Kalil Lauand and Sean Meyn are with the Department of Electrical and Computer Engineering, University of Florida, Gainesville, FL, USA.
		Emails: {\tt\small caio.kalillauand@ufl.edu} and {\tt\small meyn@ece.ufl.edu}}%
}
\begin{document}

\maketitle

\begin{abstract}
	Many machine learning and optimization algorithms are built upon the framework of stochastic approximation (SA), for which the selection of step-size (or learning rate) $\{\alpha_n\}$ is crucial for success.   An essential condition for convergence is the assumption that $\sum_n \alpha_n = \infty$.    Moreover, in all theory to date it is assumed that $\sum_n \alpha_n^2 < \infty$ (the sequence is square summable).   In this paper it is shown for the first time that this assumption is not required for convergence and finer results.  

The main results are restricted to the special case  $\alpha_n = \alpha_0 n^{-\rho}$ with $\rho \in (0,1)$.  The theory allows for parameter dependent Markovian noise as found in many applications of interest to the machine learning and optimization research communities.   Rates of convergence are obtained for the standard algorithm, and for estimates obtained via the averaging technique of Polyak and Ruppert.

\whamb    Parameter estimates converge with probability one, and in $L_p$ for any $p\ge 1$.    
Moreover, the rate of convergence of the the mean-squared error (MSE) is  $O(\alpha_n)$,  which is improved to $O(\max\{ \alpha_n^2,1/n \})$   with averaging.

Finer results are obtained for linear SA:

\whamb   The covariance of the estimates is  optimal in the sense of  prior work of Polyak and Ruppert.  

\whamb   Conditions are identified under which the bias decays faster than $O(1/n)$.  When these conditions are violated,   the bias at iteration $n$ is approximately $\upbeta_\uptheta\alpha_n$ for a vector $\upbeta_\uptheta$ identified in the paper.   Results from numerical experiments illustrate that $\upbeta_\uptheta$ may be large due to a combination of multiplicative noise and Markovian memory.
\end{abstract}

\clearpage

\tableofcontents

\clearpage

\section{Introduction}
\label{s:Intro}

Many problems of interest to the machine learning and optimization research communities hinge upon one task: root-finding in the presence of noise.
That is, the goal is to estimate the vector $\theta^*$ solving $\barf(\theta^*) = 0$, in which  $\barf\colon \Re^d \to \Re^d$ is defined by an expectation
\begin{equation}
	\barf(\theta) \eqdef \Expect[f(\theta,\Phi)]
	\label{e:barfdef}
\end{equation}
where $\Phi$ is a random vector which takes values in a set $\state$,   and $f\colon\Re^d\times\state\to\Re^d$ satisfies appropriate continuity and measurability assumptions.

The standard approach is stochastic approximation (SA), defined as a $d$-dimensional recursion: for an initial condition $\theta_0 \in \Re^d$,  
\begin{equation}
	\theta_{n+1} = \theta_{n}+ \alpha_{n+1} f(\theta_n,\Phi_{n+1}) \, , 
	\qquad 
	n\ge 0 
	\label{e:SA_recur}
\end{equation}
in which $\{\Phi_n\}$ is a sequence of random vectors converging in distribution to $\Phi$ as $n \to \infty$ and $\{\alpha_n\}$ is known as a ``step-size'' sequence.   For ease of exposition we focus on step-size sequences of the form $\alpha_n = \alpha_0  n^{-\rho}$ in which $\alpha_0>0$  and    $\rho \in [0,1]$.

Two canonical examples are
\begin{align}
	\text{Stochastic Gradient Descent (SGD):} &&  
	f(\theta_n,\Phi_{n+1})  & = -\nabla L\, (\theta_n)  +   \Delta_{n+1} 
	\label{e:SGD}
	\\
	\text{Temporal Difference (TD) Learning:} &&  
	f(\theta_n,\Phi_{n+1})  &=    \clD_{n+1} \zeta_n
	\label{e:TD}
\end{align}
In \eqref{e:SGD} the function  $L\colon\Re^d\to\Re$ is a loss function to be minimized, and $\bfDelta$ is a zero-mean sequence (often i.i.d.).
The SA recursion  \eqref{e:TD} describes a temporal difference method, in which $\{ \clD_{n+1} : n\ge 0\}$ is the scalar sequence of temporal differences,  and the sequence of vectors $\{ \zeta_{n} : n\ge 0\}$ are known as eligibility vectors.     In TD-learning with linear function approximation,  the sequence $\{\zeta_n \}$ does not depend upon $\theta_n$ and   $\clB(\theta_n,\Phi_n) \eqdef \Expect[ \clD_{n+1} \mid \Phi_0,\dots,\Phi_n]$ is the Bellman error associated with the parameter $\theta_n$, evaluated at $\Phi_n$  (which summarizes states and actions at iteration $n$).

In these two instances of SA the use of \textit{exploration} implies that  the evolution of  $\bfPhi \eqdef \{\Phi_n\}$ will depend upon the parameter sequence;
one example of this   is the application of $\epsilon$-greedy policies in Q-learning \cite{sutbar18,CSRL}. To address this reality it is assumed that this process satisfies a conditional Markov property as in \cite{pezheu97,rambha19,chedevborkonmey21}.    If the state space $\state$ is finite then   the conditional Markov property requires a  family of transition matrices $\{ P_\theta \colon \theta \in \Re^d \}$,   and for each $n$ and $x'\in\state$ it is assumed that  
\begin{equation}
	\Prob\{\Phi_{n+1} = x' \mid  \Phi_0,\dots,\Phi_n, \theta_n\}    =   P_{\theta_n} (x, x')\,, \qquad \textit{when $\Phi_n=x\in\state$.}
	\label{e:PthetaExplained}
\end{equation}

There is extensive theory for the Markovian setting in which $P_\theta$ does not depend on $\theta$,  and this theory is largely restricted to two  settings: (i) vanishing step-size with $\rho \in (1/2,1]$,
and (ii) constant step-size, in which $\rho=0$. 
In either case, bounds on the mean-squared error (MSE) of estimates are determined by the step-size:   subject to assumptions,  $\Expect[\|  \theta_n - \theta^*\|^2] = O(\alpha_n)$.   These  assumptions are most subtle when $\rho=1$:
see the CLT for SA in \cite{benmetpri12,bor20a} or the MSE theory in \cite{chedevborkonmey21,chezhadoaclamag22}.

The Polyak-Ruppert (PR) averaged estimates are defined by 
\begin{equation}
	\thetaPR_N = \frac{1}{N-N_0+1} \sum_{k=N_0}^N  \theta_k\,,\qquad k\ge N_0
	\label{e:PR}
\end{equation}
in which the interval $\{0,\dots,N_0 \}$ is known as the \textit{burn-in} period.    
Averaging was introduced in prior work to optimize the covariance  in the CLT for the parameter estimates, also known as the  \textit{asymptotic covariance}.     For any SA recursion, the asymptotic covariances associated with estimates with and without averaging
are defined by
\begin{align}
	\SigmaTheta^\alpha &\eqdef   \lim_{n\to\infty} n^\rho \Cov(\theta_n)  
		\label{e:Avar1}
	\\
		\SigmaPR &\eqdef   \lim_{n\to\infty} n  \Cov(\thetaPR_n)  
	\label{e:Avar2}
\end{align}
Moreover, the matrix $\SigmaTheta \eqdef   \lim_{n\to\infty} n \Cov(\theta_n) $, obtained by scaling the covariance of unaveraged estimates by $n$, is typically infinite. Optimality of the asymptotic covariance is defined in a sense similar to the bound of Cram\`er and Rao:   provided $\{\theta_n\}$ is convergent to $\theta^*$ we must have $\SigmaTheta^\alpha  \ge \SigmaPR$,  where the inequality is in the matricial sense.    An expression for $\SigmaPR$ is given in 
\eqref{e:Cov_lowerbound_def}.    Provided the bias decays sufficiently rapidly, 	\eqref{e:Avar2} implies that the MSE satisfies
\begin{equation}
	\lim_{n\to\infty}
	n 	\Expect[\| \thetaPR_n - \theta^* \|^2 ]   = \trace(\SigmaPR)   
	\label{e:PR_MSElimit}
\end{equation}
The error bound $\Expect[\|  \thetaPR_n - \theta^*\|^2] = O(1/n)$ is  far faster than achieved without averaging if $\rho\in [0,1)$.

\textit{However, there is general theory only for the limited range $\rho\in  (1/2,1)$}.  
The major contribution of this paper is to show that these remarkable conclusions from SA theory hold for arbitrary  $\rho\in (0,1)$. 
The results have surprising consequences to step-size selection in practice.   We first recall the motivation for choosing $\rho=0$.

\whamit{Argument for constant step-size.}   
There is intuition based on the noise-free recursion defined by  $x_{n+1} = x_{n}+  \alpha_{n+1}   \barf(x_n) $,  $n\ge 0$,
an Euler approximation of the \textit{mean flow},
\begin{equation}
	\ddt \odestate_t = \barf(\odestate_t)
	\label{e:meanflow}
\end{equation}      
Under the assumptions of this paper, the solutions to \eqref{e:meanflow}  converge to $\theta^*$ with exponential rate of convergence,
and $\{ x_n \}$ also converges geometrically quickly,   provided that  $\rho=0$ and $\alpha_n\equiv \alpha_0$ is chosen sufficiently small.       

It is true that transients  in the parameter error will show a similar decay rate for the corresponding SA recursion. 
This is seen by expressing   \eqref{e:SA_recur} as a \textit{noisy} Euler approximation of the mean flow: 
\begin{equation}
	\theta_{n+1} = \theta_{n}+ \alpha_{n+1} [\barf(\theta_n) + \Delta_{n+1}] \, , 
	\quad 
	n\ge 0 
	\label{e:NoisyEuler}
\end{equation}
with $\Delta_{n+1} = f(\theta_n,\Phi_{n+1}) - \barf(\theta_n)$.   
In proofs of convergence as well as convergence rates for SA, an approximation of the form   $ \theta_n\approx \odestate_{t_n}$ is obtained,   in which $t_n = \sum_{k=1}^n \alpha_k$.    For  fixed step-size the convergence $\odestate_{t_n} \to \theta^*$
holds    geometrically quickly.    For vanishing step-size with $0<\rho<1$ we have  $t_n\approx  \alpha_0  n^{1-\rho}/ (1-\rho)$,  so that  the convergence rate   is not geometric, but it remains    \textit{very fast}.

Of course, the approximation  $ \theta_n\approx \odestate_{t_n}$  is  valid only for a limited range of $n$.
Eventually   noise dominates the rate of convergence:  \textit{The CLT tells us that the rate of convergence of the MSE can be no faster than $O(1/n)$},  which is  far slower than anticipated by the mean flow approximation.   

\whamit{Argument for vanishing step-size.}    
It is argued in this paper that this is a means to attenuate   bias.
This is most clearly explained for the special case of linear SA:   the proof of \Cref{t:stats_summary} is based on the following approximation, for arbitrary  $\rho\in (0,1)$:
\begin{equation}
	\begin{aligned}
		\thetaPR_n & =\theta^*  +     \alpha_n   \upbeta_n   +  \tfrac{1}{\sqrt{n}}     \clZ_n   
		\\[.35em]
		\Expect[\| \thetaPR_n - \theta^* \|^2 ]  &= \alpha_n^2   \| \upbeta_n\|^2  +  \tfrac{1}{n} 	\Expect[\| \clZ_n  \|^2] 
	\end{aligned}
	\label{e:biasVariance}
\end{equation}  
in which $\{\upbeta_n \}$ is a  deterministic sequence,  convergent to an identified value $\upbeta_\uptheta$, and 
$\{\clZ_n \}$ is  a zero-mean stochastic process satisfying $\Cov(\clZ_n)\to \SigmaPR$ as $n\to\infty$.    
Consequently,   if $\rho<1/2$ and $\upbeta_\uptheta\neq 0$,  then the MSE  is dominated by $\alpha_n^2\gg 1/n $.

A compatible bound is also obtained in the present paper for the general   (nonlinear) SA recursion:
\begin{equation}
	\Expect[\| \thetaPR_n - \theta^* \|^2 ]  \leq \bdd{t:BigBounds}  \max( \alpha_n^2 , 1/n) \, , \qquad \rho\in (0, 1)
	\label{e:PR_MSEBound}
\end{equation}
See \Cref{t:BigBounds}.
This bound together with \eqref{e:PR_MSElimit}  were   previously obtained in  \cite{chedevborkonmey21} in the classical regime $ \rho\in (1/2, 1)$.

\wham{\textit{Contributions}:} The main technical contributions are differentiated by the assumptions on the ``noise''  $\bfDelta =\{\Delta_n : n\ge 1\}$.  
\textbf{MD}:   $\bfDelta $  is a martingale difference sequence, 
\textbf{AD} (additive):     $\bfDelta $ does not depend upon the parameter sequence, and   
\textbf{MU} (multiplicative):   the general setting.      

In the MD setting, $\Delta_{n+1}$ may or may not depend upon $\theta_n$.   For example, the MD assumption holds when $\bfPhi$ is i.i.d..   Another example of MD  is  Q-learning  in the tabular setting \cite{sze10,CSRL}, or   using split sampling \cite{bormey00a}.

Results (i)--(iii) are obtained from \Cref{t:BigBounds} in the general setting with $\rho\in (0,1)$:

\whamrm{(i)} \textit{$L_p$ moment bounds}: 
for any $p\ge 1$ 
there exists a constant $\bdd{t:BigBounds}$ depending on $p$,  $\rho$,  and the initial condition such that for each $n\ge 1$,
\begin{equation}
	\Expect[\| \theta_n - \theta^* \|^p ]\leq \bdd{t:BigBounds}  \alpha^{p/2}_n 
	\label{e:Pmoment_SA}
\end{equation}

\whamrm{(ii)}  \textit{Convergence and target bias}:
The   estimates  $\{\theta_n\}$ converge to $\theta^*$ with probability one. Moreover, the rate of convergence of the average \textit{target bias} is   identified: for a constant $\barUpupsilon^* \in \Re^d$,
\begin{equation}
	\lim_{N \to \infty } \frac{1}{\alpha_{N+1} } 
	\Expect\Big[ \frac{1}{N } \sum_{k=1}^N  \barf(\theta_k)\Big]
	=
	\frac{1}{1-\rho}  \barUpupsilon^* \, , \qquad \rho \in (0,2/3)
	\label{e:target_bias_lim}
\end{equation}
A representation for $\barUpupsilon^*$ is given in \Cref{s:LinSA_proofs}, from which we conclude that 
$\barUpupsilon^* = 0$   in the MD and AD settings.

\whamrm{(iii)}  \textit{Convergence rate}: The MSE bound \eqref{e:PR_MSEBound} is established.
\textit{We are not aware of prior work establishing
	almost sure convergence or 
	the MSE  bound \eqref{e:PR_MSEBound}  for $\rho \in (0,1/2]$ even in the MD setting.
}

The sharpest results  are obtained for linear SA,
\begin{equation}
	f(\theta_n,\Phi_{n+1})  =  A_{n+1} \theta_n  -  b_{n+1}   
	\label{e:linearSA}
\end{equation}
in which the matrix $A_n$ and vector $b_n$ are fixed functions of $\Phi_n$ for each $n$.  An example is TD learning with linear function approximation \eqref{e:TD}.

\whamrm{(iv)} 
The approximation \eqref{e:biasVariance} is established, so that  PR averaging achieves the optimal asymptotic covariance for any $\rho\in (0,1)$.  
Moreover, the scaled MSE satisfies 	\eqref{e:PR_MSElimit} provided $\barUpupsilon^* =0$.

\wham{\textit{Takeaway for the practitioner}.}   Up to now we have left open who is the winner:  those who advocate $\rho=0$, or those advocating vanishing step-size with $\rho \in (1/2,1)$.     \textit{There are no winners}.

\whamb If your application falls in the MD or AD settings,    as in typical implementations of stochastic gradient descent,  
then it is sensible to take a small value of $\rho$.    The new theory in this paper demonstrates that in this case 
$\Expect[\| \thetaPR_n - \theta^* \|^2] =O(1/n)$, because  the bias converges rapidly:  $\upbeta_n \to 0$ faster than $O(n^{-D})$ for any $D \geq 1$.   

However,   there is no theory in this paper that suggests $\rho =0$ will give better performance, or adapted non-vanishing step-size such as proposed as part of 
ADAM~\cite{ADAM}.    
Moreover,  constant gain algorithms present the challenge of choosing $\alpha_0>0$ for stability;  no such challenge presents itself for $\rho\in (0,1)$.

\whamb  If your application falls into the class MU then    $\Expect[\| \thetaPR_n - \theta^* \|^2] = O( \| \upbeta_\uptheta\|^2 \alpha^2_n)$.  
The value of $ \| \upbeta_\uptheta\|^2 $ may be large,   as illustrated in numerical experiments.   

Examples of the MU setting abound in reinforcement learning.  In particular,  TD-learning with linear function approximation is an example of linear SA with Markovian noise.

\def\tbullet{{\scalebox{0.5}{\textbullet}}}
\def\sbullet{{\scalebox{0.75}{\textbullet}}}

\wham{\textit{New approach to analysis}:}  

Establishing convergence rates for SA with Markovian noise often begins with  the noise decomposition of M\'{e}tivier and Priouret \cite{metpri87}:
\begin{equation}
	\Delta_{n+1} = \MD_{n+1} - \clT_{n+1} + \clT_{n}  - \alpha_{n+1} \Upupsilon_{n+1}
	\label{e:DeltaDecomp}
\end{equation}
in which $\{\MD_{n+1} \}$ is a martingale difference sequence and $\{\clT_{n+1} - \clT_{n}  \}$ is a telescoping sequence. 

The results of the present paper also rely on a noise decomposition,  introduced here for the first time:   
\begin{equation}
	\Delta_{n+1}= (-\alpha_{n+1})^{m+1} \Upupsilon^{\sbullet}_{n+1} 
	+
	\MD^\sbullet_{n+1}  - \clT^\sbullet_{n+1} +  \clR^\sbullet_{n}
	+ \alpha_{n+1}
	[
	\barG_{n+1}\tiltheta_n
	- \upbeta^\circ_{n+1}]
	\label{e:Deltadecomp_recur_intro}
\end{equation}
where  $\{\MD^\sbullet_{n+1}\} $ is a martingale difference sequence, 
$\{ \Upupsilon^{\sbullet}_{n+1} \}$   is a stochastic process with bounded $L_p$ moments (uniform in $n$ for each $m$), 
and the deterministic sequence of matrices  $ \{\barG_{n+1}\}  $ and vectors $\{ \upbeta^\circ_{n+1} \}$ are convergent. The sequence $\{\clT^\sbullet_{n} - \clR^\sbullet_{n}  \}$ vanishes in $L_p$ at rate $\alpha_{n+1}/n$ so that $\{- \clT^\sbullet_{n+1} +  \clR^\sbullet_{n}\}$ is approximately telescoping. In particular, the sequence $\{ \upbeta^\circ_{n+1} \}$ dominates bias for large $n$.

This decomposition is crucial in establishing optimality of the asymptotic covariance of $\{\thetaPR_n  \}$.  First,  for any given $\rho$ we may choose $m$ so that $(\alpha_{n+1})^{m+1}  \le (\alpha_0)^m /n$, so that the first term is insignificant in covariance calculations.   Second,  a constant such as $\upbeta^\circ_{n+1} $ does not change the covariance.   Finally,   the term  $ \alpha_{n+1} 	\barG_{n+1}\tiltheta_n  $ may be interpreted as a vanishing perturbation of the linear dynamics.  It is shown that such perturbations do not impact the asymptotic covariance---see \Cref{t:stats_summary}.

\subsection{Literature Survey}
\label{s:litreview}
\wham{Asymptotic Statistics}
The optimal asymptotic covariance $\SigmaPR$ was first introduced in the 1950's for the scalar algorithm \cite{chu54o}. The use of averaging to achieve this lower bound appeared much later in \cite{pol90,poljud92,rup88} for general SA recursions with $\rho \in (1/2,1)$ and  $\{\Delta_k\}$ a martingale difference sequence (this is case MD in the present paper).  Under these stronger assumptions on $\bfDelta$, \cite{poljud92} provides a treatment of the regime $\rho \in (0,1)$ for linear SA, obtaining the following conclusions for PR-averaged estimates: optimality of the CLT covariance, optimal MSE rates and almost sure convergence to $\theta^*$.

In applications to optimization it is more common to take $\rho =0$ \cite{bacmou13,vasbacsch19,jinnetjor20,lijunmouwaijor22}. 
The general constant step-size algorithm with averaging is considered in \cite{moujunwaibarjor20,durmounausam24}  for linear SA,  
where it is shown that the estimates $\{\thetaPR_N\}$ are convergent to $\theta^*$ and that the convergence rate is approximately optimal in a mean-square sense.   
Finite-$n$ bounds are also obtained.  It is assumed in \cite{moujunwaibarjor20} that $\bfPhi$ is i.i.d.\  (independent and identically distributed), which implies the MD setting of the present paper.     The paper  \cite{durmounausam24} goes far further,  allowing for $\bfPhi$ to be an uniformly geometrically ergodic Markov chain,   obtaining $L_p$ bounds on the estimation error,  and improving upon the bounds of \cite{moujunwaibarjor20}  in the MD setting.    
We are not aware of extensions beyond the linear setting.

Also with $\rho =0$ are  the articles \cite{laumey25a,huozhachexie24,huochexie22,laumey23a} that construct bias approximations in Markovian settings. 
The assumptions in \cite{laumey23a} are more closely related to the present paper, in which the following bias representation was obtained for linear SA: for a constant vector $\barUpupsilon^* \in \Re^d$ and $\alpha_n \equiv \alpha_0$,
\begin{equation}
	\lim_{N \to \infty}\Expect[\thetaPR_N] = \theta^* + \alpha_0 [A^*]^{-1} \barUpupsilon^* + O(\alpha_0^2)
	\label{e:biasCDC}
\end{equation}
None of this prior work allows $\bfPhi$ to be parameter dependent.   Extensions to parameter dependent noise may be found in   \cite{allgas24}.

Moreover, the CLT covariance lower bound $\SigmaPR$ is generally not achieved in fixed step-size algorithms with averaging even for linear SA in any of the noise settings \cite{moujunwaibarjor20,laumey23a}.
In the general MU case with $\bfPhi$ parameter independent, it admits the following approximation \cite{laumey23a}:   $\lim_{N \to \infty} N\Cov(\tilthetaPR_N) = \SigmaPR +  \alpha_0 Z + O(\alpha_0^2)$ for a matrix $Z \in \Re^{d \times d}$ identified in the paper.

In the theory of reinforcement learning the value $\rho=1$ is often adopted, as in the original formulation of Q-learning by Watkins \cite{sutbar18};  it was discovered in \cite{devmey17b,wai19a} that this choice will result in poor convergence rate unless $\alpha_0$ is chosen sufficiently large.

\wham{Finite-Time Bounds}
The recursion with vanishing step-size $\alpha_n = \alpha_0  n^{-\rho}$ and $\rho \in (0,1)$ has been previously studied within the context of TD learning in \cite{bharussin18} and stochastic gradient descent in \cite{bacmou11}. In both of these papers, finite-$n$ mean squared error (MSE) bounds are obtained in the MD setting for any $\rho$.

In a Markovian setting, analogous MSE bounds have been established for TD learning and general linear SA algorithms with constant step-size \cite{bharussin18,sriyin19}.

The work \cite{chezhadoaclamag22} is most closely related to the present work because they tackled nonlinear recursions with Markovian noise and vanishing stepsize with $\rho \in (0,1)$.   The standard SA recursion \eqref{e:SA_recur} is considered in  \cite{chezhadoaclamag22},  of the form
\begin{equation}
	f(\theta_n,\Phi_{n+1}) = F(\theta_n, X_{n+1})  +  \MD^0_{n+1}
	\label{e:model_SIVA}
\end{equation} 
where $\{X_n\}$ is   a  Markov chain on a state space $\state$ with transition kernel $P$,  and 
$\{\MD^0_n\}$ a martingale difference sequence satisfying the standard assumptions of the SA literature  \cite{bor20a}.  It is assumed that $\{X_n\}$  is \textit{uniformly ergodic} \cite{MT}: there is a unique invariant measure $\pi$ and fixed constants  $R<\infty$ and $\varrho<1$ such that $\|P^n(x,\varble) - \pi(\varble)\|_{\text{\tiny\sf TV}} \le R\varrho^n$.

\begin{subequations}

	In addition, it is assumed that $V(\theta) = \half \|\theta - \theta^*\|^2$ serves as a Lyapunov function for the mean flow, in the sense that  the following holds, for some $c_0>0$ and all $\theta$: 
	\begin{equation}
		\nabla V(\theta) \cdot \barf(\theta) \le -c_0 V(\theta)
		\label{e:Lyap_SIVA}
	\end{equation}    
	Under these assumptions, the mean square error bound is of the form 
	\begin{equation}
		\Expect[  \|   \tiltheta_n \|^2]  \le    K [ \log\big( {n}/{\alpha_0}  \big) +1] \, \alpha_n  +  \epsy_n  \,,\qquad  K =  L  \frac{1}{c_0} \max 
		\Big\{ 1,\frac{\log(R/\varrho)}{\log(1/\varrho)} \Big\} 
		\label{e:chezhadoaclamag22}
	\end{equation}
	in which $\{\epsy_n \}$ vanishes \textit{quickly}, similar to the bound on the right hand side of \eqref{e:bias_add}.   The   constant $L$ takes the form $L = 520 (L_{\bar{f}}+\sigma^2_{\MD^0})^2\alpha_0 (\|\theta^* \| + 1)^2$, where $L_{\bar{f}}$ is  the Lipschitz constant for $\barf$ and $\sigma^2_{\MD^0}$ is a constant  depending upon the   variance of $\MD^0_{k+1}$.  
	
\end{subequations}

The present paper complements  \cite{chezhadoaclamag22},  in that the slow convergence for $\rho<1/2$ is explained by the large bias in this regime when there is multiplicative noise---see \eqref{e:bias_multi}.   Prior to the present paper, it might have been expected that \eqref{e:chezhadoaclamag22} could be improved  using PR-averaging.     The bias formula \eqref{e:bias_multi} also tells us  that the   bound  \eqref{e:chezhadoaclamag22} is loose due to the   $\log(n)$ coefficient, but this may be a necessary price for such an elegant upper bound---see \Cref{fig:SivavsLaumey} and the discussion surrounding it.

Finite-$n$ asymptotic covariance bounds for applications to TD-learning with PR averaging and vanishing step-size appeared recently in \cite{sri24}.

\wham{Organization:} The paper is organized into three additional sections. \Cref{s:main} introduces the assumptions that are imposed throughout the paper, followed by contributions (i)--(iv). \Cref{s:exp} illustrates the theory in \Cref{s:main} through a numerical experiment. Conclusions and directions for future research are included in \Cref{s:conc}. Technical proofs of the main results in \Cref{s:main} are contained on the Appendix.

\section{Main results}
\label{s:main}
\subsection{Assumptions and Notation}
\label{s:assump}
It is assumed that
$\bfPhi \eqdef \{\Phi_n \colon n \geq 0\}$ is  a stochastic process evolving on a Polish state space $\state$. It is parameter dependent, in the sense that its dynamics are governed by a parameterized family of transition kernels $\{ P_\theta \colon \theta \in \Re^d \}$. 
The process $\bfPhi$ need not be Markovian, instead analysis is based upon the Markov chain $\bfPhi^\theta$ with transition kernel $P_\theta$, for $\theta \in \Re^d$ fixed.  

For each $\theta$, $\bfPhi^\theta$  is assumed geometrically ergodic
with unique invariant measure $\uppi_\theta$, so that $\barf(\theta) = \Expect_{\uppi_\theta}[f(\theta,\Phi^\theta_n)]$, where the subscript $\uppi_\theta$ denotes the expectation is taken in steady state: $\Phi^\theta_n \sim \uppi_\theta$ for each $n$. 

Any functions $g: \state \to \Re^d$, $h: \Re^d \times \state \to \Re^d$ are assumed to be measurable with respect to the Borel sigma-algebras $\bx$ and $\clB(\Re^d\times\state)$, respectively.

\wham{Notation:}  

The joint parameter-disturbance process is expressed as $\bfPsi \eqdef \{ \Psi_n = (\theta_n,\Phi_{n+1}) : n\geq 0 \}$.

For any measurable function $w\colon\state\to [1,\infty)$,  let $L^w_\infty$ denote the set of all measurable functions $g\colon\state\to\Re$ satisfying
\begin{equation}
	\| g\|_w \eqdef 
	\sup_{x\in\state} \frac{1}{w(x) } |g(x)| <\infty
	\label{e:Linfty_space}
\end{equation}

\begin{subequations}

	For a  $d$-dimensional vector-valued random variable $X$ and $p\ge 1$,
	the $L_p$ norm is denoted 
	$\| X \|_p = ( \Expect[\| X \|^p])^{1/p}$,  and the $L_p$  \textit{span norm}  $ \|X  \|_{p,s}= \min\{ \| X - c \|_p  : c \in \Re^d \}$.   
	When $p=2$ we have  
	\begin{align}
		\|X  \|_{2,s} & = \sqrt{\trace(\Cov(X))}   
		\label{e:spandef}
		\\
		\|X  \|_{2}^2 & = \|X  \|_{2,s}^2  +  \| \Expect[X] \|^2   
		\label{e:L2andSpan}
	\end{align}
\end{subequations}

Any $d\times d$  positive definite matrix $\Lyapsol$ defines a  norm on $\Re^d$  via $\|x\|^2_{\Lyapsol} = x^\transpose \text{$\Lyapsol$} x$ for $x \in \Re^d$.

\wham{Assumptions:}

The following additional assumptions are imposed throughout the paper:
\wham{(A1)} The SA recursion \eqref{e:SA_recur}
is considered with $\{\alpha_n\}$ of the form $\alpha_n  = \alpha_0 n^{-\rho}$ with $\rho \in (0,1)$ and $\alpha_0>0$.

\wham{(A2)}There exists a function $L\colon\state\to\Re$ satisfying, for all $x\in\state$ and $\theta,\theta'\in \Re^d$,
\[\begin{aligned}
	\| f(\theta,x) - f(\theta',x) \| 
	&\le 
	L(x) \|\theta -\theta'\|
	\\
	\| f(0, x)\| 
	&\leq 
	L(x)   
\end{aligned}
\]

\wham{(A3)}   
The  mean flow ODE \eqref{e:meanflow} is globally asymptotically stable with unique equilibrium $\theta^* \in \Re^d$ and the scaled vector field $\barf_{\infty}(\theta) \eqdef \lim_{c \to \infty } \tfrac{1}{c} \barf(c\theta)$ exists for each $\theta \in \Re^d$. Moreover, the ODE@$\infty$ \cite{bormey00a},
\[
\ddt \odestate^\infty_t = \barf_{\infty}(\odestate^\infty_t)
\]
is globally asymptotically stable.

\wham{(A4)}   
The Markov chain $\bfPhi^\theta$ satisfies (DV3) with common Lyapunov function $V$ and small set $C$:
$$
\left. 
\mbox{\parbox{.85\hsize}{\raggedright
		For  functions $V\colon\state\to\Re_+$,  $ W\colon\state\to [1, \infty)$, 
		a small set $C$, $b>0$,
		\[
		\Expect\bigl[  \exp\bigr(  V(\Phi^\theta_{n+1})      \bigr) \mid \Phi^\theta_n=x \bigr]  
		\le  \exp\bigr(  V(x)  - W(x) +  b \ind_C(x)  \bigl)
		\]
}}
\right\}
\eqno{\hbox{\bf (DV3)}}
$$
for all $x \in \state$, $\theta \in \Re^d$.

In addition, for each $r>0$,
\begin{align}
	S_W(r)  &:= \{ x :  W(x)\le r \} 
	\ \  
	\text{is either small or empty,}
	\\
	 \quad &\sup\{ V(x) :  x\in S_W(r) \}  <\infty \,, 
	\nonumber
	\\
	\lim_{r\to\infty}     &\sup_{x \in \state} \frac{L(x) }{\max \{r,W(x)\}}  = 0 
	\label{e:L_is_oW}
\end{align}

Moreover, for any $p \in (1,\infty)$, the following holds for the family of transition kernels: for a constant $b_d$, any $\theta, \theta' \in \Re^d$ and $H =1+V^p$,
\begin{equation}
	\| P_\theta - P_\theta'      \|_H 
	\leq 
	\frac{b_d}{1 + \|  \theta \| + \|  \theta' \|} \| \theta - \theta'  \|
	\label{e:family_kernel_Lip}
\end{equation}

\wham{(A5)}  $\barf:\Re^d \to \Re^d$ is continuously differentiable in $\theta$, and the Jacobian matrix $\barA = \partial \barf$ is  uniformly bounded and uniformly Lipschitz continuous with Lipschitz constant $L_A$.     
Moreover,  $  A^\ocp \eqdef \barA(\theta^\ocp)$ is Hurwitz.

Assumption (A4) may seem strong at first. The bound (DV3) implies  geometric ergodicity of $\bfPhi^\theta$ for each $\theta \in \Re^d$ \cite{konmey05a} and holds for finite state space Markov chains through state space augmentation under very general assumptions.
Moreover, the Lipschitz condition \eqref{e:family_kernel_Lip} is satisfied by exploration design in applications to reinforcement learning \cite{mey24}.

Assumptions (A2), (A3), and (A5)  imply exponential asymptotic stability of \eqref{e:meanflow}\cite[Prop. A.11]{laumey25a}.
\begin{proposition}
	\label[proposition]{t:meanflowexp}
	Under (A2), (A3), and (A5), the ODE \eqref{e:meanflow} is exponentially asymptotically stable: for positive constants $\bdd{t:meanflowexp}$ and $\bdde{t:meanflowexp}$, and any initial condition $\odestate_0 \in \Re^d$,
	$
	\|  \odestate_t - \theta^*   \| 
	\leq 
	\bdd{t:meanflowexp} \|  \odestate_0 - \theta^*  \| \exp( - \bdde{t:meanflowexp} t) 
	$, $t > 0$.
	\qed
\end{proposition}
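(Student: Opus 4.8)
The plan is to upgrade the global asymptotic stability of (A3) to a global \emph{exponential} Lyapunov inequality $\nabla V(\odestate)\cdot\barf(\odestate)\le -c\,V(\odestate)$, with $V$ equivalent to $\|\odestate-\theta^*\|^2$; the claimed bound then follows from a Gr\"onwall comparison applied to $t\mapsto V(\odestate_t)$. After translating coordinates so that $\theta^*=0$, I would assemble $V$ from two explicit Lyapunov functions---one governing behavior near the equilibrium, one governing behavior near infinity---and use (A3) to control the bounded region in between. Near the origin, since $A^*=\barA(\theta^*)$ is Hurwitz by (A5), I solve the Lyapunov equation $A^{*\transpose}P+PA^*=-I$ with $P\succ0$ and set $V_0(\odestate)=(\odestate-\theta^*)^{\transpose}P(\odestate-\theta^*)$. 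Writing $\tilde\odestate=\odestate-\theta^*$, the fundamental theorem of calculus together with the uniformly Lipschitz Jacobian of (A5) gives $\|\barf(\odestate)-A^*\tilde\odestate\|\le \tfrac12 L_A\|\tilde\odestate\|^2$, whence $\nabla V_0\cdot\barf=-\|\tilde\odestate\|^2+O(\|\tilde\odestate\|^3)\le -\tfrac12\|\tilde\odestate\|^2$ on a ball $\{\|\tilde\odestate\|\le r_0\}$, i.e. $\nabla V_0\cdot\barf\le -c_0 V_0$ there.

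For the far field I use the second part of (A3). The scaled vector field $\barf_\infty(\odestate)=\lim_{c\to\infty}\tfrac1c\barf(c\odestate)$ exists; because (A2) makes $\barf$ globally Lipschitz (uniformly in $c$ after scaling), this limit is attained uniformly on the unit sphere, $\barf_\infty$ is continuous and positively homogeneous of degree one, and $\|\barf(\odestate)-\barf_\infty(\odestate)\|=o(\|\odestate\|)$ as $\|\odestate\|\to\infty$. Since the ODE@$\infty$ $\dot{\odestate}=\barf_\infty(\odestate)$ is GAS and $\barf_\infty$ is homogeneous of degree one, its stability upgrades to \emph{global exponential} stability (a standard consequence of degree-one homogeneity), so there is a degree-two homogeneous Lyapunov function $V_\infty\asymp\|\odestate\|^2$ with $\nabla V_\infty\cdot\barf_\infty\le -c_\infty V_\infty$. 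Transferring this to the true field costs the error $\nabla V_\infty\cdot(\barf-\barf_\infty)$, and since $\nabla V_\infty$ is homogeneous of degree one this is $O(\|\odestate\|)\cdot o(\|\odestate\|)=o(V_\infty)$; hence for $\|\odestate\|\ge\bar R$ large enough, $\nabla V_\infty\cdot\barf\le -\tfrac12 c_\infty V_\infty$.

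It remains to glue the two regimes across the compact annulus $\{r_0\le\|\tilde\odestate\|\le\bar R\}$, on which the equilibrium is excluded. By (A3) and continuity, $\barf$ is bounded away from zero there and every trajectory exits the annulus toward the origin in a time bounded uniformly over the annulus. I would either patch $V_0$ and $V_\infty$ with a smooth cutoff and absorb the resulting cross terms using the strict negativity of $\nabla V\cdot\barf$ on this compact transition zone, or argue directly at the level of trajectories: the far-field estimate drives $\|\tilde\odestate_t\|$ down to $\bar R$ at a geometric rate, the annulus is traversed in uniformly bounded time with no increase of $\|\tilde\odestate_t\|$, and the local estimate completes the exponential decay once $\{\|\tilde\odestate\|\le r_0\}$ is entered. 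Composing the three exponential/bounded-time pieces produces constants $\bdd{t:meanflowexp},\bdde{t:meanflowexp}>0$ with $\|\odestate_t-\theta^*\|\le\bdd{t:meanflowexp}\|\odestate_0-\theta^*\|\exp(-\bdde{t:meanflowexp}t)$.

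\textbf{Main obstacle.} The technical crux is the far-field step: deducing exponential (not merely asymptotic) stability of $\barf_\infty$ from its GAS via homogeneity, and then transferring the contraction back to $\barf$ through the $o(\|\odestate\|)$ approximation while keeping all constants uniform in the direction $\odestate/\|\odestate\|$. Once strict negativity of $\nabla V\cdot\barf$ on the compact annulus is in hand the intermediate patching is routine, but the bookkeeping needed to combine the three separate rate constants into a single global $\bdde{t:meanflowexp}$---and to certify that no trajectory grows during the transition---is where the care concentrates. This mirrors the route taken in \cite{laumey22b}.
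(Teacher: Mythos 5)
The paper contains no proof of \Cref{t:meanflowexp} to compare against: the proposition is stated with a terminal QED mark and is justified solely by the citation to \cite{laumey22b} in the sentence immediately preceding it. Judged on its own merits, your three-regime construction is sound and is the standard route (and, as you note, the one taken in the cited reference). The local estimate from the Hurwitz matrix $A^*$ and the Lipschitz Jacobian of (A5) is correct as written. The far-field step is also legitimate: $\barf_\infty$ inherits a global Lipschitz bound from $\barf$ (note that global Lipschitz continuity of $\barf$ comes most directly from the uniformly bounded Jacobian in (A5), not from (A2), whose expectation need not be finite without (A4)), so the flow of the ODE@$\infty$ is unique and positively homogeneous; global asymptotic stability then upgrades to exponential stability, and a Rosier-type converse theorem supplies a smooth homogeneous $V_\infty$ with $\nabla V_\infty\cdot\barf\le-\tfrac12 c_\infty V_\infty$ outside a sufficiently large ball, exactly as you argue.

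The one place needing repair is the annulus step. Your claim that the annulus is traversed ``with no increase of $\|\odestate_t-\theta^*\|$'' is unjustified: a trajectory starting there may move outward before converging. Likewise, the uniform transit time does not follow from $\barf$ being bounded away from zero on the annulus---nonvanishing of the vector field alone excludes neither recirculation nor arbitrarily slow passage. Both facts should instead be drawn from (A3) together with continuous dependence on initial conditions: global asymptotic stability implies uniform attraction on compact sets, so trajectories issued from $\{\|\odestate-\theta^*\|\le \bar R\}$ remain in a fixed compact set and enter $\{\|\odestate-\theta^*\|\le r_0\}$ by a deterministic time $T$. With that substitution the composition closes: for initial conditions in the annulus one has $\|\odestate_0-\theta^*\|\ge r_0$, so the bounded-time excursion is absorbed into the constant $\bdd{t:meanflowexp}$ through a factor $e^{\bdde{t:meanflowexp} T}$, keeping the final bound proportional to $\|\odestate_0-\theta^*\|$. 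These are routine fixes; the overall architecture of your proof is correct.
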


\subsection{General Stochastic Approximation}
\label{s:moment}
Moment bounds of the form \eqref{e:Pmoment_SA} for general nonlinear SA recursions \eqref{e:SA_recur} with Markovian noise are established  in \cite{chedevborkonmey21} for $p=4$ under slightly weaker assumptions as in the present paper. In particular, the authors consider a relaxation of the condition \eqref{e:L_is_oW} in (A4) (see \Cref{s:moment} for more details). However, this prior work concerns the $\rho \in (1/2,1)$ regime for vanishing step-size algorithms. 
An extension to constant step-size is presented in \cite{laumey23a} where $L_4$ moment bounds on estimation error are also established in a parameter independent setting. 

Similar arguments as the ones used in these prior works can be used to establish moment bounds in the setting of this paper. Under (A4), we obtain bounds of the form \eqref{e:Pmoment_SA} for arbitrary $p$, which in turn imply contributions (ii) and (iii).
\begin{theorem}
	\label[theorem]{t:BigBounds}
	Suppose (A1)--(A4) hold. Then, for each $\rho \in (0,1)$,
	\begin{romannum}

		\item There exists $\bdd{t:BigBounds} < \infty$ depending upon $\Psi_0$ and $\rho$ such that \eqref{e:Pmoment_SA} holds. 

		\item The sequence of estimates converges to $\theta^*$ for each initial condition $\theta_0 \in \Re^d$ with probability one: $ \theta_n \overset{\as}{\to} \theta^\ocp$.
		
		If in addition (A5) holds,
		
		\item There exists $\bdd{t:BigBounds}$ depending upon $\Psi_0$ and $\rho$ such that \eqref{e:PR_MSEBound} holds.
		
		\item The representation for the average target bias in \eqref{e:target_bias_lim} holds.
		Moreover, the right hand side of  \eqref{e:target_bias_lim}  is zero outside of the MU setting.

	\end{romannum}
\end{theorem}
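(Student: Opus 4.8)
The plan is to prove the four claims in sequence, treating the $L_p$ bound (i) as the foundation from which (ii)--(iv) follow after a finer analysis of the averaged error. For (i) I would work with $V(\theta)=\half\|\theta-\theta^*\|^2$ and the noisy Euler form \eqref{e:NoisyEuler}. \Cref{t:meanflowexp} supplies the local drift $\nabla V\cdot\barf\le -c_0 V$ near $\theta^*$, while the ODE@$\infty$ in (A3) is what upgrades this to a genuinely global drift inequality controlling large excursions, with the Lipschitz bound (A2) controlling the growth of $f$. Expanding $\Expect[V(\theta_{n+1})^{p/2}\mid\clF_n]$ (here $\clF_n$ is the history through time $n$) I would aim for a recursion of the schematic form $\Expect[V(\theta_{n+1})^{p/2}]\le(1-c\alpha_{n+1})\Expect[V(\theta_n)^{p/2}]+C\alpha_{n+1}^2 r_n$, where $r_n$ collects the noise contributions, and then compare against $\{\alpha_n^{p/2}\}$, whose ratios satisfy $\alpha_{n+1}^{p/2}/\alpha_n^{p/2}=1-\tfrac{\rho p}{2n}+o(1/n)$, to conclude $\Expect[V(\theta_n)^{p/2}]=O(\alpha_n^{p/2})$, which is \eqref{e:Pmoment_SA}.

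The crux is controlling $\bfDelta$ inside $r_n$ for small $\rho$. The classical M\'etivier--Priouret split \eqref{e:DeltaDecomp} isolates a martingale difference, a telescoping part, and an $O(\alpha_{k+1})$ term; under (A4) the solution of the associated Poisson equation has the moments needed through (DV3) and \eqref{e:L_is_oW}. For $\rho>1/2$ the telescoping boundary terms decay fast enough after Abel summation, but for $\rho\le 1/2$ they do not, which is exactly why I would iterate the split to reach \eqref{e:Deltadecomp_recur_intro}: choosing $m$ with $(\alpha_{k+1})^{m+1}\le(\alpha_0)^m/k$ renders the leading $\Upupsilon^{\sbullet}$ term negligible, while the residual $\{-\clT^\sbullet_{k+2}+\clR^\sbullet_{k+1}\}$ telescopes at the improved rate $\alpha_{k+1}/k$. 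This single step is what extends the bound to all $\rho\in(0,1)$ and is where the argument leaves \cite{chedevborkonmey21,laumey23a}.

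Claim (ii) is then immediate from (i): Markov's inequality gives $\Prob(\|\theta_n-\theta^*\|>\epsilon)\le\epsilon^{-p}\bdd{t:BigBounds}\alpha_n^{p/2}$, and since $\alpha_n^{p/2}=O(n^{-\rho p/2})$ choosing $p>2/\rho$ makes the right side summable, so Borel--Cantelli delivers $\theta_n\overset{\as}{\to}\theta^*$. For the averaged bound (iii) I would invoke (A5) to linearise, writing $\tiltheta_{n+1}=(I+\alpha_{n+1}A^*)\tiltheta_n+\alpha_{n+1}\Delta_{n+1}+\alpha_{n+1}e_{n+1}$, with the Jacobian-Lipschitz remainder $e_{n+1}$ quadratic in $\tiltheta_n$ and hence $L_p$-negligible by (i). Substituting \eqref{e:Deltadecomp_recur_intro} and summing splits $\thetaPR_N-\theta^*$ into a martingale-driven part of covariance $O(1/N)$ and a deterministic bias of size $O(\alpha_N)$, the latter because $\tfrac1N\sum_{k\le N}\alpha_k=\tfrac{\alpha_N}{1-\rho}(1+o(1))$; squaring and combining yields \eqref{e:PR_MSEBound}.

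For (iv) I would avoid the Taylor expansion of $\barf$ and use the exact consequence of the recursion, $\barf(\theta_k)=\alpha_{k+1}^{-1}(\theta_{k+1}-\theta_k)-\Delta_{k+1}$. Summation by parts shows that, using only $\Expect[\tiltheta_k]=O(\alpha_k)$ from (i), the contribution of the first term to $\alpha_{N+1}^{-1}\Expect[\tfrac1N\sum_k\barf(\theta_k)]$ is $O(N^{\rho-1}\log N)\to 0$, so the limit is governed entirely by $-\alpha_{N+1}^{-1}\tfrac1N\sum_k\Expect[\Delta_{k+1}]$. Evaluating $\Expect[\Delta_{k+1}]$ through \eqref{e:Deltadecomp_recur_intro}, the martingale and telescoping pieces contribute nothing in the limit and the leading $-\alpha_{k+1}\baru_{k+1}$ term produces $\tfrac{1}{1-\rho}\barUpupsilon^*$ with $\barUpupsilon^*=\lim_k\baru_{k+1}$; the restriction $\rho<2/3$ is where I expect the remaining higher-order remainder in this evaluation (controlled by the $p=3$ case of (i)) to be forced to $o(\alpha_{N+1})$ after averaging. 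Finally $\barUpupsilon^*=0$ outside the MU setting because there $\Expect[\Delta_{k+1}]=0$ identically---$\bfDelta$ is a martingale difference (MD) or carries no $\theta_k$-dependent memory (AD)---so no steady-state correlation survives. The main obstacle is the $L_p$ drift recursion of the first paragraph for small $\rho$: the contraction factor $1-c\alpha_{n+1}$ sits very close to $1$, so one must show the Markovian noise genuinely contributes at order $\alpha_{n+1}^2$ rather than $\alpha_{n+1}$ once correlations are resolved, and uniformly in $p$ so that $\bdd{t:BigBounds}$ is finite for every $p$; this is precisely what the iterated decomposition together with the (DV3) moment bounds is engineered to provide.
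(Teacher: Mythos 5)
Your plan for part (i)---the foundation on which everything else rests---has two genuine gaps. First, the global drift inequality $\nabla V\cdot\barf(\theta)\le -c_0V(\theta)$ for $V(\theta)=\half\|\theta-\theta^*\|^2$ does not follow from (A3): global asymptotic stability of the mean flow together with stability of the ODE@$\infty$ does not preclude trajectories that transiently move away from $\theta^*$ in Euclidean norm, so no pointwise inequality of this kind is available. That inequality is exactly the extra hypothesis \eqref{e:Lyap_SIVA} of \cite{chezhadoaclamag22} which this paper is at pains to avoid. The paper's proof instead obtains contraction only over ODE sampling blocks $[T_n,T_{n+1}]$, with $T$ fixed by \eqref{e:TAssumption} so that mean-flow solutions halve over a horizon of length $T$, and with a state-dependent Lyapunov function $\clV(\theta,x)=\clV^\circ(\theta)+\beta\clV^\circ(\theta)v_+(x)$ (\Cref{t:LyapContract}). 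Second, your device for ``resolving correlations'' inside the one-step drift recursion is circular: the $L_p$ bounds on the terms of \eqref{e:DeltaDecomp}, and a fortiori of \eqref{e:Deltadecomp_recur_intro}, appear in \Cref{t:Delta_bdd} as \emph{consequences} of the drift inequality \eqref{e:Lyap_contraction}, since bounding $\Upupsilon_{k+2}$ requires controlling $\Expect[\|\theta_k\|^p\exp(V(\Phi_{k+1}))]$, i.e.\ precisely the joint moment bound you are trying to establish. Moreover, \eqref{e:Deltadecomp_recur_intro} is constructed only for \emph{linear} SA: the recursive construction in \Cref{t:cor_genG_Met_Decomp} requires every stage to remain affine in $\theta_k$, so it cannot be invoked for the general nonlinear bound \eqref{e:Pmoment_SA} (nor in parts (iii)--(iv), where the paper uses only \eqref{e:DeltaDecomp} together with a Taylor expansion of $\barf$). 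In the paper's proof of (i) no Poisson-equation machinery appears at all: the Markovian correlations are absorbed by the exponential-moment estimates that (DV3) supplies (\Cref{t:Prop12ii_CLT,t:prop3CLT,t:contract-4}).

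There is also an error in part (iv). You invoke $\Expect[\tiltheta_k]=O(\alpha_k)$ ``from (i)'', but \eqref{e:Pmoment_SA} only yields $\|\Expect[\tiltheta_k]\|\le\Expect[\|\tiltheta_k\|]=O(\alpha_k^{1/2})$; the bias bound $O(\alpha_k)$ is a far finer statement, proved in the paper only for linear SA (\Cref{t:stats_summary}). With the correct bound, summation by parts gives $\|\Expect[S^\uptau_N]\|=O(N^{\rho/2})$, so after normalizing by $N\alpha_{N+1}$ this term contributes $O(N^{3\rho/2-1})$, not $O(N^{\rho-1}\log N)$---and it is exactly this term, rather than a higher-order remainder in the evaluation of $\Expect[\Delta_{k+1}]$, that forces the restriction $\rho<2/3$ in \eqref{e:target_bias_lim}. (Your parts (ii) and (iii) are sound in substance: Borel--Cantelli in place of the paper's Fubini argument is equivalent, and for (iii) the plain decomposition \eqref{e:DeltaDecomp} suffices and is what the paper uses.)
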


The proofs of part (i) and (ii) of \Cref{t:BigBounds} are given at the end of \Cref{s:momentproofs}, while parts (iii) and (iv) are proven in \Cref{s:GenSA_proofs}.

\Cref{t:BigBounds}~(iv) hints 
at 
a deeper connection between performance of slowly vanishing and constant step-size algorithms. See \Cref{t:stats_summary} for a bias representation entirely analogous to \eqref{e:biasCDC} for linear SA, which is obtained as a consequence of \eqref{e:target_bias_lim}.

\subsection{Linear Stochastic Approximation}
\label{s:LinSA}
This section concerns asymptotic statistics for linear SA recursions of the form \eqref{e:linearSA}. In the general MU setting, it is assumed that $A_{n+1} = \Expect_\uppi[A(\Phi_{n+1})] = A^*$ and $b_{n+1} = \Expect_\uppi[b(\Phi_{n+1})] = \barb$. Note that $A_n = A^*$ for each $n$ when the noise is AD. The mean vector field takes the form $\barf(\theta) = A^*(\theta - \theta^*)$ with $\theta^* = [A^*]^{-1} \barb$.

The optimal asymptotic covariance $\SigmaPR$ is defined as follows:
\begin{equation}
	\SigmaPR  \eqdef  G \Sigma_{\MD^*} G^\transpose
	\label{e:Cov_lowerbound_def}
\end{equation}
where $  G= [A^*]^{-1}$  
and $ \Sigma_{\MD^*} $ is the asymptotic covariance matrix of the martingale difference sequence $\{\MD^*_n\}$, depending only on the ``noise'' $\bfPhi$.
See \eqref{e:parameter_indep} in \Cref{s:GenSA_proofs} for a precise definition of $\clW^*$ in terms of algorithms primitives.

\begin{subequations}

	\begin{theorem}
		\label[theorem]{t:stats_summary}
		Suppose (A1)--(A5) hold for SA recursions of the form \eqref{e:linearSA}. Then, for each $\rho \in (0,1)$,
		\begin{romannum}
			
			\item 	If the noise is AD, there exist $\bdd{t:stats_summary}$, $\bdde{t:stats_summary}$ and $n_b>0$ sufficiently large such that
			\begin{equation}
				\|	\Expect[\tiltheta_n] \|
				\leq
				\bdd{t:stats_summary} \exp( - \bdde{t:stats_summary} (\tau^b_n - \tau^b_{n_b}))
				\label{e:bias_add}
			\end{equation}
			in which $  \tau^b_n =\alpha_0 (1+(1-\rho)^{-1}[n^{1-\rho} - 1])$.
			
			\item	If the noise is MU and $\rho \in (0,2/3)$, there exists a vector $\upbeta_\uptheta \in \Re^d$ such that
			\begin{equation}
				\lim_{n \to \infty} \frac{1}{\alpha_{n+1}} \Expect[\tilthetaPR_n]  =   \upbeta_\uptheta 
				\, , \qquad \tilthetaPR \eqdef \thetaPR - \theta^*
				\label{e:bias_multi}
			\end{equation}

			\item For any of the three noise settings (including MU),
				$
				\lim_{n \to \infty} n\Cov(\thetaPR_n) = \SigmaPR
				$
			
		\end{romannum}
	\end{theorem}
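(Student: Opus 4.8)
The plan is to treat the three parts in increasing order of difficulty, reserving most of the effort for part~(iii). Throughout I write the error recursion in linear form $\tiltheta_{k+1} = (I + \alpha_{k+1}A^*)\tiltheta_k + \alpha_{k+1}\Delta_{k+1}$, where $\Delta_{k+1} = [A(\Phi_{k+1}) - A^*]\theta_k - [b(\Phi_{k+1}) - \barb]$ in the MU setting and only the $\theta$-free second term survives in AD. Parts~(i) and~(ii) reduce to facts already in hand. For (i), taking expectations yields the \emph{deterministic} recursion $m_{k+1} = (I + \alpha_{k+1}A^*)m_k + \alpha_{k+1}\Expect[\Delta_{k+1}]$ for $m_k \eqdef \Expect[\tiltheta_k]$; in the AD setting $\Expect[\Delta_{k+1}] = -(\Expect[b(\Phi_{k+1})]-\barb)$ decays geometrically in $k$ by the geometric ergodicity implied by (DV3), while the homogeneous part contracts like $\prod(I + \alpha_{k+1}A^*)$, of norm $\bdd{t:stats_summary}\exp(-\bdde{t:stats_summary}(\tau^b_n - \tau^b_{n_b}))$ since $A^*$ is Hurwitz and $\tau^b_n = \sum_{k\le n}\alpha_k$. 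Convolving the sub-exponential homogeneous kernel against the geometric forcing, the slower homogeneous rate dominates and produces \eqref{e:bias_add}. For (ii), linearity gives $\barf(\theta_k) = A^*\tiltheta_k$, hence $\frac1N\sum_k\barf(\theta_k) = A^*\tilthetaPR_N$ up to the burn-in correction, so the target-bias limit \eqref{e:target_bias_lim} of \Cref{t:BigBounds}(iv) delivers \eqref{e:bias_multi} with $\upbeta_\uptheta = (1-\rho)^{-1}G\barUpupsilon^*$ after left-multiplying by $G$, inheriting the range $\rho \in (0,2/3)$.

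For part~(iii) I would pass to the centered process $\mathring{\theta}_k = \tiltheta_k - \Expect[\tiltheta_k]$, since $\Cov(\thetaPR_N) = \Cov(N^{-1}\sum_k\mathring{\theta}_k)$ up to the burn-in normalization. Substituting the new decomposition \eqref{e:Deltadecomp_recur_intro} converts the recursion into
\[
\mathring{\theta}_{k+1} = (I + \alpha_{k+1}A^* + \alpha_{k+1}^2\barG_{k+1})\mathring{\theta}_k + \alpha_{k+1}\mathring{\MD}^\sbullet_{k+2} + \alpha_{k+1}\eta_{k+1},
\]
where $\eta_{k+1}$ gathers the centered high-order term $(-\alpha_{k+1})^{m+1}\Upupsilon^\sbullet_{k+2}$ and the approximately-telescoping remainder $-\clT^\sbullet_{k+2}+\clR^\sbullet_{k+1}$ (the deterministic $\baru_{k+1}$ drops out on centering). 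The idea is to read this as the reference martingale-driven recursion plus vanishing perturbations of the gain matrix and the forcing. For the reference part I use the Polyak--Ruppert identity: from $A^*\mathring{\theta}_k = \alpha_{k+1}^{-1}(\mathring{\theta}_{k+1}-\mathring{\theta}_k) - \mathring{\Delta}_{k+1}$, whose leading noise contribution is $\mathring{\MD}^\sbullet_{k+2}$, one obtains $\mathring{\theta}_k = -G\mathring{\MD}^\sbullet_{k+2} + G\alpha_{k+1}^{-1}(\mathring{\theta}_{k+1}-\mathring{\theta}_k) + (\text{lower order})$, so that $N^{-1}\sum_k\mathring{\theta}_k$ is, to leading order, the normalized martingale sum $-N^{-1}G\sum_k\mathring{\MD}^\sbullet_{k+2}$. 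Its scaled covariance satisfies $\Cov(N^{-1/2}\sum_k\mathring{\MD}^\sbullet_{k+2}) \to \Sigma_{\MD^*}$, the $\theta$-dependence of the martingale covariance being replaced by its value at $\theta^*$ through \eqref{e:parameter_indep} using $\tiltheta_k \to 0$, whence $N\Cov(\thetaPR_N) \to G\Sigma_{\MD^*}G^\transpose = \SigmaPR$.

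It then remains to show every correction is negligible after multiplication by $\sqrt N$, and here the moment bound $\Expect[\|\tiltheta_k\|^2] = O(\alpha_k) = O(k^{-\rho})$ from \Cref{t:BigBounds}(i) (which also controls $\mathring{\theta}_k$) is the essential input. Summation by parts on $G\alpha_{k+1}^{-1}(\mathring{\theta}_{k+1}-\mathring{\theta}_k)$ produces a boundary term and an interior sum, each of $L_2$ size $O(N^{\rho/2})$, i.e.\ $O(N^{\rho/2-1})$ after averaging; the gain perturbation enters through $[A^*+\alpha_{k+1}\barG_{k+1}]^{-1} = G - \alpha_{k+1}G\barG_{k+1}G + O(\alpha_{k+1}^2)$, contributing a weighted martingale sum of size $O(N^{-1}(\sum_{k\le N}\alpha_k^2)^{1/2})$; the telescoping remainder vanishes at rate $\alpha_k/k$; and the high-order term is made negligible by choosing $m$ with $(\alpha_{k+1})^{m+1}\le \alpha_0^m/k$. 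Each, times $\sqrt N$, tends to $0$ for every $\rho\in(0,1)$, and cross terms with the martingale part vanish by Cauchy--Schwarz.

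The main obstacle is exactly this last bookkeeping in the regime $\rho\le 1/2$. There the partial sums $\sum_{k\le N}\alpha_k^2 \asymp N^{1-2\rho}$ (and $\asymp \log N$ at $\rho=1/2$) no longer converge, so the perturbation and summation-by-parts estimates become borderline: only the exact exponents $N^{(\rho-1)/2}$ and $N^{-\rho}$ save the day, and any slack would leave a spurious $O(1)$ term in $N\Cov(\thetaPR_N)$. This is precisely where classical Polyak--Ruppert theory, confined to $\rho\in(1/2,1)$, ceases to apply, and the extension becomes possible only because \Cref{t:BigBounds}(i) supplies the $O(\sqrt{\alpha_n})$ error bound for \emph{all} $\rho$. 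A secondary difficulty, specific to the MU setting, is rigorously justifying evaluation of the asymptotic martingale covariance at $\theta^*$; this rests on the $L_2$ and almost-sure convergence $\tiltheta_k\to 0$ together with continuity of the relevant conditional second moments in $\theta$.
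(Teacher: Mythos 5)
Your parts (i) and (ii) follow the paper's own route: (i) is the expectation recursion contracted at rate $\exp(-\delta\,\tau^b_n)$ with geometrically decaying forcing (the paper packages the product bound you assert from the Hurwitz property as \Cref{t:Lyapunov_eq_norm}, using the norm induced by the Lyapunov solution $\Lyapsol$), and (ii) is exactly the paper's reduction of \eqref{e:bias_multi} to the target-bias limit \eqref{e:target_bias_lim} via $\barf(\theta)=A^*\tiltheta$, giving $\upbeta_\uptheta=(1-\rho)^{-1}[A^*]^{-1}\barUpupsilon^*$.

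Part (iii) is where you genuinely diverge, and your route is interesting. The paper keeps the constant matrix $A^*$ on the left of the Polyak--Ruppert identity, so the gain perturbation survives as $S^{\clA}_N=\sum_k\alpha_{k+1}\barG_{k+1}\tiltheta_k$ in \eqref{e:PR_stoch_process}; since the crude bound $\|\tiltheta_k\|_2=O(\sqrt{\alpha_k})$ only yields $\sqrt{N}\,N^{-1}\|S^{\clA}_N\|_2=O(N^{(1-3\rho)/2})$, which diverges for $\rho\le 1/3$, the paper is forced to bound $S^{\clA}_N$ by summation by parts in terms of $\|\tilthetaPR_k\|_{2,s}$ and then run the bootstrap of \Cref{t:PR_vanishingfast} (improving $\alpha_N\to\alpha_N^2\to\cdots\to N^{-1/2}$ in stages indexed by $\rho$). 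You instead center, $\mathring\theta_k\eqdef\tiltheta_k-\Expect[\tiltheta_k]$, and solve the identity exactly, inverting $A^*+\alpha_{k+1}\barG_{k+1}$, so that no un-differenced copy of the parameter remains on the right-hand side; the bootstrap then disappears, and the analysis reduces to Abel summation against slowly varying matrix weights plus weighted-martingale bounds, with the remaining inputs (the decomposition \eqref{e:Deltadecomp_recur_intro}, the moment bound of \Cref{t:BigBounds}, the comparison with the parameter-independent martingale \eqref{e:parameter_indep}, and the covariance-perturbation step, i.e.\ \Cref{t:Cov_lemma}) identical to the paper's. Note that your route still leans on \Cref{t:Met_Recursively}~(iv): the slow variation $\|\alpha_{k+2}\barG_{k+2}-\alpha_{k+1}\barG_{k+1}\|_F=O(\alpha_k/k)$ is what makes the inverse weights $[A^*+\alpha_{k+1}\barG_{k+1}]^{-1}$ summable by parts.

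One piece of your bookkeeping is incomplete as stated and must be repaired, since it sits exactly where the small-$\rho$ extension is won or lost. You claim the expansion $[A^*+\alpha_{k+1}\barG_{k+1}]^{-1}=G-\alpha_{k+1}G\barG_{k+1}G+O(\alpha_{k+1}^2)$ contributes only ``a weighted martingale sum.'' But the correction multiplies \emph{every} term of the inverted identity, not just $\MD^{\sbullet}_{k+2}$: it also hits the increment $\alpha_{k+1}^{-1}(\mathring\theta_{k+1}-\mathring\theta_k)$ and the approximately telescoping pair $-\clT^{\sbullet}_{k+2}+\clR^{\sbullet}_{k+1}$. Bounding the latter termwise (each $\clT^{\sbullet}_k$ is only $O(1)$ in $L_2$, not small) gives $\sqrt N\cdot N^{-1}\sum_{k\le N}\alpha_k\cdot O(1)=O(N^{1/2-\rho})$, which diverges precisely in the regime $\rho\le 1/2$ you are trying to cover. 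Both pieces are rescued by the same device you already invoke: performing the summation by parts with the full slowly varying weight, the weighted increment sum contributes $O(N^{\rho/2})$ before averaging, and the weighted telescoping sum becomes $O(1)$ after moving the weight increments, each $O(\alpha_k/k)$, into a convergent series. So the gap is reparable with tools in hand, but as written the error accounting would leave uncontrolled terms for $\rho\le 1/2$.
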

	The asymptotic bias in \eqref{e:bias_multi}  may be expressed   $  \upbeta_\uptheta = (1-\rho)^{-1} [A^*]^{-1} \barUpupsilon^* $,  in which a  representation for $\barUpupsilon^*$ is given in \Cref{s:LinSA_proofs}.
	
\end{subequations}

\begin{subequations}

	The limits in 	\Cref{e:bias_add,e:bias_multi}
	come with error bounds:

	\begin{theorem}
		\label[theorem]{t:stats_summary_better}
		Under the assumptions of \Cref{t:stats_summary} we have the following representations: 
		for a constant $ \bdd{t:stats_summary_better}$   depending upon $\Psi_0$ and $\rho$,  
		\begin{romannum}
			\item
			\begin{equation}
				\Expect[\tilthetaPR_{n}]  =  \alpha_{n+1} \upbeta_\uptheta + \epsy_n^\upbeta 
				\, , 
				\qquad 
				\| 
				\epsy_n^\upbeta  \|    \le   \begin{cases}
					\bdd{t:stats_summary_better} n^{-3\rho/2} \quad &  \rho\le 1/2
					\\
					\bdd{t:stats_summary_better}	n^{\rho/2-1} \quad &  \rho >1/2
				\end{cases}
				\label{e:biasExp}
			\end{equation}

			\item  	Denoting $\sigmaPR \eqdef \sqrt{ \trace(\SigmaPR) }$, 
			\begin{equation}
				\begin{aligned}
					\| \tilthetaPR_n \|_2   \leq     \alpha_{n+1} \|  \upbeta_\uptheta  \| + \tfrac{1}{\sqrt{n} } & \sigmaPR    +  \epsy_n^\upsigma
					\,, \qquad   
					|\epsy_n^\upsigma |      \le   \begin{cases}
						\bdd{t:stats_summary_better}  n^{-3\rho/2}   \quad &  \rho\le  3/7
						\\
						\bdd{t:stats_summary_better}  	n^{-(3-\rho)/4}  	    \quad &  \rho  > 3/7  
					\end{cases}			 
				\end{aligned}
				\label{e:SecondMomentBddLinearSA}
			\end{equation}
		\end{romannum}
		Consequently,  if $\|  \upbeta_\uptheta  \| $ is non-zero, then the MSE $ \| \tilthetaPR_n \|_2^2$ converges to zero at the optimal rate $1/n$ if and only if $\rho>1/2$. 
	\end{theorem}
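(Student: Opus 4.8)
The plan is to derive both representations from the new noise decomposition \eqref{e:Deltadecomp_recur_intro} applied to the linear error recursion $\tiltheta_{n+1} = (I+\alpha_{n+1}A^*)\tiltheta_n + \alpha_{n+1}\Delta_{n+1}$, and then to read off the final claim through the $L_2$ bias--variance split $\Expect[\|\tilthetaPR_n\|^2] = \|\Expect[\tilthetaPR_n]\|^2 + \trace\,\Cov(\thetaPR_n)$, using that $\tilthetaPR_n$ and $\thetaPR_n$ share the same covariance. Part (i) controls the squared-bias term and part (ii) the full $L_2$ norm; since \eqref{e:biasExp} and \eqref{e:SecondMomentBddLinearSA} are the quantitative refinements of the limits already recorded in \Cref{t:stats_summary}~(ii) and (iii), essentially all the work lies in tracking rates rather than establishing existence of the limits.

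For part (i) I would take expectations in the error recursion. Since $\{\MD^\sbullet_{k+2}\}$ is a martingale difference, the only forcing that survives in the mean is the constant term $-\alpha_{k+1}^2\baru_{k+1}$ together with the vanishing perturbation $\alpha_{k+1}\barG_{k+1}\tiltheta_k$ (which enters the recursion carrying an extra factor $\alpha_{k+1}$); the approximately telescoping block $\{-\clT^\sbullet_{k+2}+\clR^\sbullet_{k+1}\}$ and the higher-order term $(-\alpha_{k+1})^{m+1}\Upupsilon^\sbullet_{k+2}$ contribute only lower order after the choice $(\alpha_{k+1})^{m+1}\le(\alpha_0)^m/k$. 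This yields a deterministic, time-varying linear recursion for $\Expect[\tiltheta_k]$ whose transition is a vanishing perturbation of $I+\alpha_{k+1}A^*$; because $A^*$ is Hurwitz by (A5), its products contract geometrically in the rescaled time $\tau^b_n$ (as in \Cref{t:meanflowexp} and \eqref{e:bias_add}), and a local fixed-point computation gives $\Expect[\tiltheta_k]\approx\alpha_{k+1}[A^*]^{-1}\baru_{k+1}$, i.e. a bias of order $\alpha_k$. Averaging, applying summation by parts to the telescoping block (which decays in $L_p$ at rate $\alpha_{k+1}/k$) and using convergence of $\barG_k,\baru_k$, produces $\Expect[\tilthetaPR_n]=\alpha_{n+1}\upbeta_\uptheta+\epsy_n^\upbeta$, where the factor $(1-\rho)^{-1}$ in $\upbeta_\uptheta$ comes from $\tfrac1n\sum_{k\le n}\alpha_k\approx\alpha_n/(1-\rho)$. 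The rate on $\epsy_n^\upbeta$ is the maximum of the residual of the mean's approach to its $\alpha_k$-steady state, the averaged telescoping error, and the higher-order term, whose dominant balance switches at $\rho=1/2$, giving the two cases in \eqref{e:biasExp}.

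For part (ii) I would first upgrade the covariance limit of \Cref{t:stats_summary}~(iii) to a rate. Writing the centered recursion driven by $\alpha_{k+1}\MD^\sbullet_{k+2}$ (plus telescoping and higher-order remainders), the martingale term alone reproduces the optimal $\SigmaPR$ after averaging, exactly as in classical Polyak--Ruppert theory, while the vanishing perturbation $\alpha_{k+1}\barG_{k+1}\tiltheta_k$ leaves the limit unchanged but contributes to the \emph{rate} of $n\,\Cov(\thetaPR_n)\to\SigmaPR$; writing $\trace\Cov(\thetaPR_n)=\tfrac1n[(\sigmaPR)^2+r_n]$ gives $\sqrt{\trace\Cov(\thetaPR_n)}=\tfrac{1}{\sqrt n}\sigmaPR+O(r_n/\sqrt n)$. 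Combining with part (i) through the $L_2$ triangle inequality $\|\tilthetaPR_n\|_2\le\|\Expect[\tilthetaPR_n]\|+\sqrt{\trace\Cov(\thetaPR_n)}$ then collects $\|\epsy_n^\upbeta\|$ and the covariance-rate error into $\epsy_n^\upsigma$. The new crossover at $\rho=3/7$ is precisely the solution of $3\rho/2=(3-\rho)/4$, i.e. the point at which the covariance-rate error $\sim n^{-(3-\rho)/4}$ overtakes the part-(i) bias-rate error $\sim n^{-3\rho/2}$.

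Finally, the ``consequently'' is immediate from \eqref{e:SecondMomentBddLinearSA}: squaring gives $\Expect[\|\tilthetaPR_n\|^2]=\alpha_{n+1}^2\|\upbeta_\uptheta\|^2+\tfrac1n(\sigmaPR)^2+(\text{cross and lower-order terms})$, and since $\alpha_n^2=\alpha_0^2 n^{-2\rho}$ the bias contribution is negligible relative to $1/n$ precisely when $\rho>1/2$; at $\rho\le 1/2$ with $\upbeta_\uptheta\neq 0$ it ties or dominates, so the optimal $\trace(\SigmaPR)/n$ behavior fails. I expect the main obstacle to be the sharp, $\rho$-dependent rates rather than the limits: quantifying the convergence $n\,\Cov(\thetaPR_n)\to\SigmaPR$ and controlling the approximately-telescoping contributions after summation by parts are the delicate steps, and the thresholds $1/2$ and $3/7$ emerge only after carefully taking the maximum over several competing error sources with distinct exponents.
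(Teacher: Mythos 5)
Your proposal is correct and, at the top level, it is exactly the paper's argument: the paper proves this theorem as a short corollary of two quantitative representations established inside the proofs of \Cref{t:stats_summary}~(ii) and (iii) --- the bias expansion \eqref{e:SecondMomentBddLinearSA_bias} and the covariance expansion \eqref{e:SecondMomentBddLinearSA_cov} --- combined through the exact bias--variance identity \eqref{e:L2andSpan}, the triangle inequality, and sub-additivity of the square root; the thresholds are then read off exactly as you do, with $3/7$ solving $3\rho/2=(3-\rho)/4$ and $1/2$ separating the two bias-error regimes, and your handling of the ``consequently'' clause (including the lower bound coming from the exact identity \eqref{e:L2andSpan}) matches the paper's intent.

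Where you genuinely differ is in how you would re-derive the two ingredients, and one attribution needs correcting. For the bias, the paper does not analyze the mean recursion for $\Expect[\tiltheta_k]$ via a quasi-static fixed point; it works with the averaged identity of \Cref{t:PRest_rep}, the M\'etivier--Priouret decomposition \eqref{e:DeltaDecomp}, and the coupling bound of \Cref{t:Up_haAf}, so the telescoping sum collapses exactly into boundary terms with no product weights. Your route instead requires controlling products $\prod_j(I+\alpha_j A^*)$ against approximately-telescoping forcing; this is workable (and could even yield sharper bias-error bounds for $\rho>1/2$, since the paper bounds $\|\Expect[S^\uptau_N]\|$ crudely by $\|S^\uptau_N\|_2$), but it is the more delicate path. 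For the covariance rate: in the regime relevant to the $3/7$ threshold ($\rho>1/3$), the dominant error in \eqref{e:SecondMomentBddLinearSA_cov} does \emph{not} come from the vanishing perturbation $S^\clA_N$ --- that term is tamed by \Cref{t:PR_vanishingfast} and dominates only for $\rho\le 1/3$ --- but from $J_{N+2}$ in \Cref{t:PR_stoch_process}, which carries the parameter-increment sum $S^\uptau_N$ with $\|S^\uptau_N\|_2 = O(N^{\rho/2})$; its cross term with the martingale part in \Cref{t:Cov_lemma} produces the $N^{-(3-\rho)/2}$ trace error and hence the $n^{-(3-\rho)/4}$ rate you quote. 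The exponents you state are right, but a complete write-up must locate this error source correctly, since it is what fixes the exponent.
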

	
\end{subequations}

The proof of parts (i) and (ii) of \Cref{t:stats_summary} are given at the end of \Cref{s:Bias_proofs}. A proof of (iii) for the AD setting can be found at the start of \Cref{s:Cov_proofs}, while the proof for the MU setting is given at the end of \Cref{s:Cov_proofs}.
\Cref{t:stats_summary_better} is obtained by identifying the dominant error terms in  \Cref{t:stats_summary}.

\section{Numerical Experiments}
\label{s:exp}

The numerical experiments contained in this section aim to illustrate the general theory, focusing on a simple linear model for which the asymptotic bias and covariance are easily computed.

An instance of the recursion \eqref{e:linearSA}  is considered in which $\{\Phi_n \}$ evolves on $\state = \{ 0, 1 \}$, with transition matrix 
$P = \begin{bmatrix}
	a & 1-a
	\\
	1-a & a
\end{bmatrix}$ where $a \in (0,1)$.    The Markov chain is reversible, with  uniform invariant distribution $\uppi$.   Moreover, it satisfies the assumptions of \cite{chezhadoaclamag22}: 
it is uniformly ergodic and its rate of convergence can be identified: $\|P^n(x,\varble) - \pi(\varble)\|_{\text{\tiny\sf TV}} \le R\varrho^n$ with $R =1/2 $ and $\varrho = 2a-1$.

For a pair of matrices and a pair of vectors  $A^0,A^1 \in \Re^{d \times d}$ and $b^0,b^1 \in \Re^{d }$,  we have \eqref{e:linearSA} with
\begin{subequations}
	\begin{equation}
		A_{n+1} = \Phi_n A^1 + (1-\Phi_n)A^0  
		\qquad
		b_{n+1} = \Phi_n b^1 + (1-\Phi_n)b^0 
		\label{e:Affine_spectral}
	\end{equation}
	Since $\uppi$ is uniform, the mean flow vector field is
	\begin{equation}
		\barf(\theta) = A^*\theta - \barb \quad 
		\text{with } A^* = \tfrac{1}{2}(A^0 + A^1) \, , 
		\quad \barb= \tfrac{1}{2}(b^0 + b^1)
		\label{e:barf_linexp}
	\end{equation}
	\label{e:Affine_model_exp}
\end{subequations}

Closed-form expressions for  the terms in \Cref{t:stats_summary} are summarized in the following.  

\begin{lemma}
	\label[lemma]{t:LinspecEx}
	For    $0<a<1$ the asymptotic statistics of the  linear   model  are computable, with
	\begin{romannum}
		\item Asymptotic bias \eqref{e:bias_multi}:    
		\begin{align}
			\upbeta_\uptheta =   \frac{1}{1-\rho} [A^*]^{-1} \barUpupsilon^*   \quad \textit{with} \quad    
			\barUpupsilon^* = 
			\frac{ (2a-1)}{4(1-a)}(A^1 - A^0)(A^0 \theta^* - b^0)
			\label{e:UpupsilonEx}
		\end{align}
		
		\item   Optimal asymptotic covariance \eqref{e:Cov_lowerbound_def}:
		$\displaystyle		\SigmaPR =  \frac{a}{1-a}(A^0 \theta^*) (A^0 \theta^*)^\transpose
		$.
	\end{romannum}
\end{lemma}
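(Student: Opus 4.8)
The plan is to reduce both parts of \Cref{t:LinspecEx} to stationary computations for the two-state chain and then substitute into the general representation of $\barUpupsilon^*$ from \Cref{s:LinSA_proofs} and the definition of $\SigmaPR$ in \eqref{e:Cov_lowerbound_def}. The object that makes everything explicit is the centered coordinate $\eta_k \eqdef 1-2\Phi_k \in \{-1,+1\}$: a one-line check gives $P\eta = \varrho\,\eta$ with $\varrho = 2a-1$, so $\eta$ is the nontrivial eigenfunction of $P$, with $\Expect_\uppi[\eta_k]=0$, $\Expect_\uppi[\eta_k^2]=1$ and $\Expect_\uppi[\eta_0\eta_j] = \varrho^{|j|}$. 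First I would rewrite the primitives \eqref{e:Affine_spectral} in these terms: from $A_{k+1} = A^0 + \Phi_k(A^1-A^0)$ and $A^* = \tfrac12(A^0+A^1)$ the multiplicative fluctuation is $\widetilde A_{k+1}\eqdef A_{k+1}-A^* = -\tfrac12(A^1-A^0)\eta_k$, while $\barf(\theta^*)=0$ in \eqref{e:barf_linexp} forces $A^1\theta^*-b^1 = -(A^0\theta^*-b^0)$, so the disturbance at the fixed point collapses to the rank-one multiple $f(\theta^*,\Phi_{k+1}) = v\,\eta_k$ with $v\eqdef A^0\theta^*-b^0$.

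For part (ii) I would use that $\Sigma_{\MD^*}$ is the long-run (CLT) covariance of the stationary additive functional $\{v\eta_k\}$, which factors as $\Sigma_{\MD^*} = \sigma_\eta^2\,vv^\transpose$ with $\sigma_\eta^2 = \sum_{j\in\intgr}\Expect_\uppi[\eta_0\eta_j]$. Because $\eta$ is an eigenfunction, Poisson's equation is solved in closed form by $\hat\eta = (1-\varrho)^{-1}\eta$, and summing the geometric autocorrelations gives $\sigma_\eta^2 = (1+\varrho)/(1-\varrho) = a/(1-a)$. Substituting $\Sigma_{\MD^*} = \tfrac{a}{1-a}vv^\transpose$ together with $G=[A^*]^{-1}$ into $\SigmaPR = G\Sigma_{\MD^*}G^\transpose$ and simplifying then produces the rank-one covariance recorded in \Cref{t:LinspecEx}(ii).

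For part (i) I would evaluate the representation of $\barUpupsilon^*$ from \Cref{s:LinSA_proofs}, which identifies it (up to the sign fixed by \eqref{e:bias_multi}) with the leading coefficient of the stationary cross-correlation $\Expect[\widetilde A_{k+1}\tiltheta_k]$. Writing the error recursion as $\tiltheta_{k+1} = (I+\alpha_{k+1}A^*)\tiltheta_k + \alpha_{k+1}[\widetilde A_{k+1}\tiltheta_k + v\eta_k]$ and keeping only the dominant filtered-noise part $\tiltheta_k \approx \sum_{j<k}(\prod_{i=j+1}^{k-1}(I+\alpha_{i+1}A^*))\alpha_{j+1}v\eta_j$, the near-diagonal terms (where the transition product is $\approx I$ and $\alpha_{j+1}\approx\alpha_k$) give $\Expect[\eta_k\tiltheta_k]\approx \alpha_k v\sum_{m\ge1}\varrho^m = \alpha_k v\,\varrho/(1-\varrho)$. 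Combining with $\widetilde A_{k+1} = -\tfrac12(A^1-A^0)\eta_k$ and simplifying $\tfrac12\cdot\varrho/(1-\varrho) = (2a-1)/[4(1-a)]$ yields $\barUpupsilon^* = \tfrac{2a-1}{4(1-a)}(A^1-A^0)v$, which is \eqref{e:UpupsilonEx}; the factor $(1-\rho)^{-1}[A^*]^{-1}$ in $\upbeta_\uptheta$ is then carried over directly from \eqref{e:bias_multi}.

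The routine parts are the two geometric sums, so the only real obstacle lies in part (i): justifying that the filtered-noise approximation of $\tiltheta_k$ captures the correlation with $\eta_k$ to leading order in $\alpha_k$, i.e. that the multiplicative term $\widetilde A_{k+1}\tiltheta_k$ and the tails of the transition products contribute only at higher order. This is precisely the content supplied by the general representation of $\barUpupsilon^*$ established in \Cref{s:LinSA_proofs}, so within this lemma the task reduces to the explicit two-state evaluations above. The one place where a factor or sign can slip is the constant $\tfrac14$, which is the product of the $\tfrac12$ from $\widetilde A_{k+1}$ with the normalization of $\hat\eta$; I would double-check it against the limit in \eqref{e:bias_multi}.
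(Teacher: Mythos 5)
Your part (ii) is correct and is essentially the paper's own computation in different coordinates. The paper evaluates $\Sigma_{\MD^*}$ through the Poisson-equation form \eqref{e:AsymptCLT_fish}, using the explicit solution $\hag$ on the two-state space; you use the equivalent autocovariance-sum form of \Cref{t:Vuni+CLT}~(iii), which the eigenfunction $\eta_k = 1-2\Phi_k$ reduces to the geometric sum $\sum_{j\in\intgr}\varrho^{|j|} = a/(1-a)$ (your $\hat\eta = (1-\varrho)^{-1}\eta$ is the paper's $\hag$ up to centering and the factor $-\tfrac12$ coming from $\tilg = -\eta/2$). Both routes give $\Sigma_{\MD^*} = \tfrac{a}{1-a}\,vv^\transpose$ with $v = A^0\theta^*-b^0$; and, like the paper's own proof, your final ``simplification'' to the stated form of $\SigmaPR$ silently uses the experimental choices $A^*=-I$, $b^0=0$ from \eqref{e:expchoices}.

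Part (i), as written, has a genuine gap. In this paper $\barUpupsilon^*$ is \emph{defined} by \eqref{e:Upstar}, the stationary expectation $\barUpupsilon^* = -\Expect_\uppi[\haA(\theta^*,\Phi_{k+2})\,f(\theta^*,\Phi_{k+1})]$ involving the Poisson solution $\haA = \partial_\theta\haf$; nowhere does \Cref{s:LinSA_proofs} identify it with ``the leading coefficient of the stationary cross-correlation $\Expect[\widetilde{A}_{k+1}\tiltheta_k]$''. That identification is true, but it is a \emph{consequence} of the bias analysis (\Cref{t:Met_Pri_Exp}, \Cref{t:Up_haAf}, and the proof of \Cref{t:stats_summary}~(ii)), so you cannot appeal to the general theory to supply it; and your own derivation of the cross-correlation --- dropping the multiplicative feedback term in the filtered-noise expansion, freezing the transition products near the diagonal, and passing to a quasi-stationary limit --- is exactly the kind of approximation that machinery exists to control, so as a standalone argument it is heuristic, and nearly circular (the bias asymptotics needed to make it rigorous are themselves stated in terms of $\barUpupsilon^*$). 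The fix is immediate with ingredients you already have: for this model $\haA(\theta^*,x) = -\tfrac12(A^1-A^0)\hat\eta(x)$ up to an additive constant that contributes nothing since $\Expect_\uppi[\eta]=0$, and $f(\theta^*,x) = v\,\eta(x)$, so \eqref{e:Upstar} gives directly
\[
\barUpupsilon^* \;=\; \tfrac12\,(A^1-A^0)\,v\;\Expect_\uppi\bigl[\hat\eta(\Phi_{k+2})\,\eta(\Phi_{k+1})\bigr]
\;=\; \tfrac12\,\frac{\varrho}{1-\varrho}\,(A^1-A^0)v
\;=\; \frac{2a-1}{4(1-a)}\,(A^1-A^0)(A^0\theta^*-b^0),
\]
which is \eqref{e:UpupsilonEx}; the factor $(1-\rho)^{-1}[A^*]^{-1}$ in $\upbeta_\uptheta$ then comes from \Cref{t:stats_summary}~(ii), exactly as you say. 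This one-line evaluation is what the paper's proof does, written there with $\hag$ in place of $\hat\eta$.
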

\begin{wrapfigure}[17]{r}{0.5\textwidth}	
	\includegraphics[width= 1\hsize]{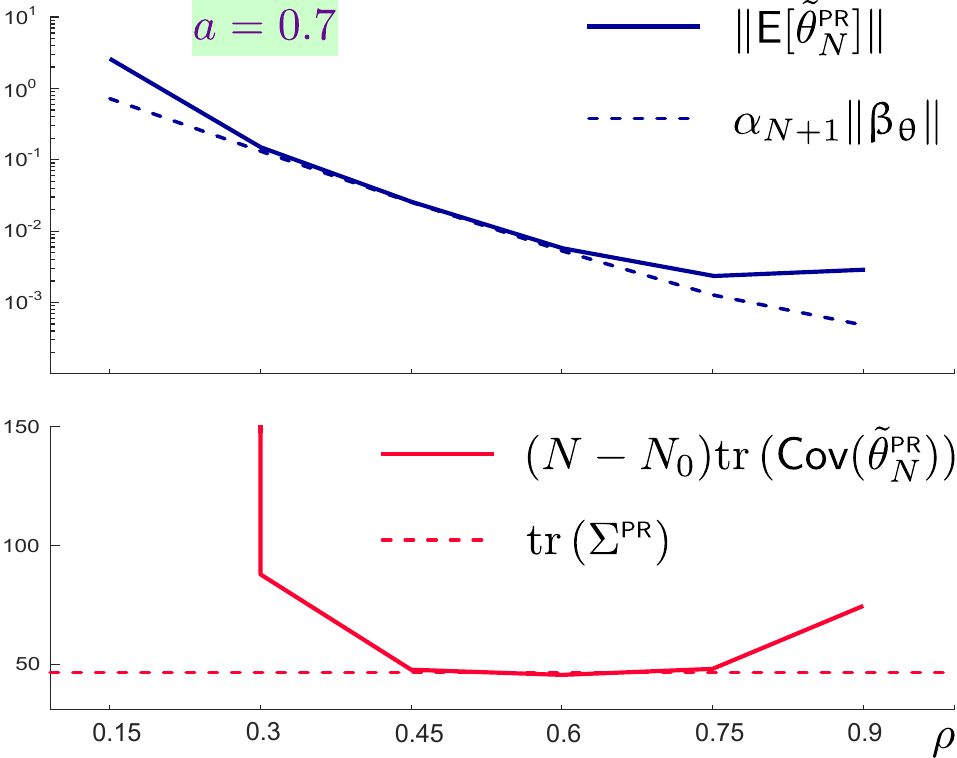}  
	\caption{Empirical and theoretical mean and variance.} 
\label{fig:BiasVar}
\end{wrapfigure}
The proof of \Cref{t:LinspecEx} is postponed to \Cref{s:exp_proofs}.


\wham{Setup:} 
In  the experiments surveyed here,
the linear SA recursion \eqref{e:linearSA}  using \eqref{e:Affine_spectral} was simulated with $a \in \{0.3,0.5,0.7\}$ and
\begin{equation}\begin{aligned}
	A^0 &= 2 \begin{bmatrix}
		-2 & 0
		\\
		1 & -2
	\end{bmatrix}
	\, , \quad 
	b^0 =  \begin{bmatrix}
		0
		\\
		0
	\end{bmatrix}
	\\
	A^1 &= 2 \begin{bmatrix}
		1 & 0
		\\
		-1 & 1
	\end{bmatrix}
	\, , \quad 
	b^1 =  -2\begin{bmatrix}
		1
		\\
		1
	\end{bmatrix}
\end{aligned}
\label{e:expchoices}
\end{equation}
so that $\barf(\theta) = A^* \theta -\tfrac{1}{2}b^1$ with $A^*= - I$ and $\theta^* = - \tfrac{1}{2}b^1$.

Several choices of step-size were considered, in which $\alpha_n = \alpha_0 n^{-\rho}$ with  $\alpha_0 =0.5$ and  $\rho \in \{0.15,0.3,0.45,0.6,0.75,0.9\}$. For each value of $\rho$, $M=300$ independent experiments were carried out for a time horizon of $N=3 \times 10^{5}$ with initial conditions $\{\theta_0^i : 1 \leq i \leq M\}$ sampled independently from $N(\theta^*,25I)$.

After obtaining the sequence of estimates $\{\theta_n^i\}$ for the $i^{\text{th}}$ experiment, PR-averaging was applied with $N_0 = 2\times 10^3$ to compute $\{{\tilthetaPRi_{n}} : n> N_0\}$.


\wham{Results:} The empirical mean and variance were calculated for the final PR-averaged estimates ${\thetaPR_{N}}^i$ across all independent runs  and are plotted as functions of $\rho$ in 
\Cref{fig:BiasVar} for the special case $a=0.7$. Also plotted in this figure are their associated theoretical optimal values obtained from \Cref{t:stats_summary}.  Results for the remaining choices of $a$ are displayed in \Cref{fig:BiasVar03,fig:BiasVar05} in \Cref{s:exp_proofs}. We see that the empirical mean and covariance are near optimal for $\rho \in \{0.45,0.6,0.75\}$.

\whamit{Small-$\rho$ regime.}
Problems are observed for small $\rho$ as expected in view of
\eqref{e:SecondMomentBddLinearSA} since $\upbeta_\uptheta\neq 0$  (see \eqref{e:UpupsilonEx}).    Moreover, for small $\rho>0$ there is a need for a longer run time because the step-size vanishes so slowly.  For example,   when $\rho = 0.15$ we have that $\alpha_N \approx 0.09$ and if the same experiments were carried out with $\alpha_n \equiv  \alpha_N$ for all $n$, one could not expect optimality of the asymptotic covariance based upon theory in \cite{laumey23a,moujunwaibarjor20}.

\whamit{Large-$\rho$ regime.}  
Also observed in \Cref{fig:BiasVar} is poor solidarity with theory for $\rho\sim 1$.    This is predicted by \Cref{t:stats_summary_better}  since the error terms 
$ \epsy_n^\upbeta $, $ \epsy_n^\upsigma$ converge to zero at rate $	n^{-(3-\rho)/4}  \approx 	n^{-1/2}$ when $\rho\approx 1$.

\textit{Performance without averaging.} 
Shown on the left hand side of \Cref{fig:SivavsLaumey} is a plot of the approximation for the MSE $\{ \| \tiltheta_n \|_2^2  \}$,  along with the  finite-$n$ bounds \eqref{e:chezhadoaclamag22}. See \Cref{s:exp_proofs} for details on how each of the terms in \eqref{e:chezhadoaclamag22} were identified for this example.
The bound is very loose even though the maximum in  \eqref{e:chezhadoaclamag22} defining $K$  is equal to unity.  However, remember that the bound of 
\cite{chezhadoaclamag22} is universal, over all nonlinear SA algorithms with common values of  $K$.   

\textit{Averaging.} 
The plots on the right hand side of \Cref{fig:SivavsLaumey} show an approximation for the MSE
$\{ \| \tilthetaPR_n \|_2^2\}  $, along with the  approximate MSE obtained from \Cref{t:stats_summary_better}    (obtained by dropping 	the error term $ \epsy_n^\upsigma$ in \eqref{e:SecondMomentBddLinearSA}).  The approximation is surprisingly tight over the entire run.

\begin{figure}[h]
\centering
\includegraphics[width=\hsize]{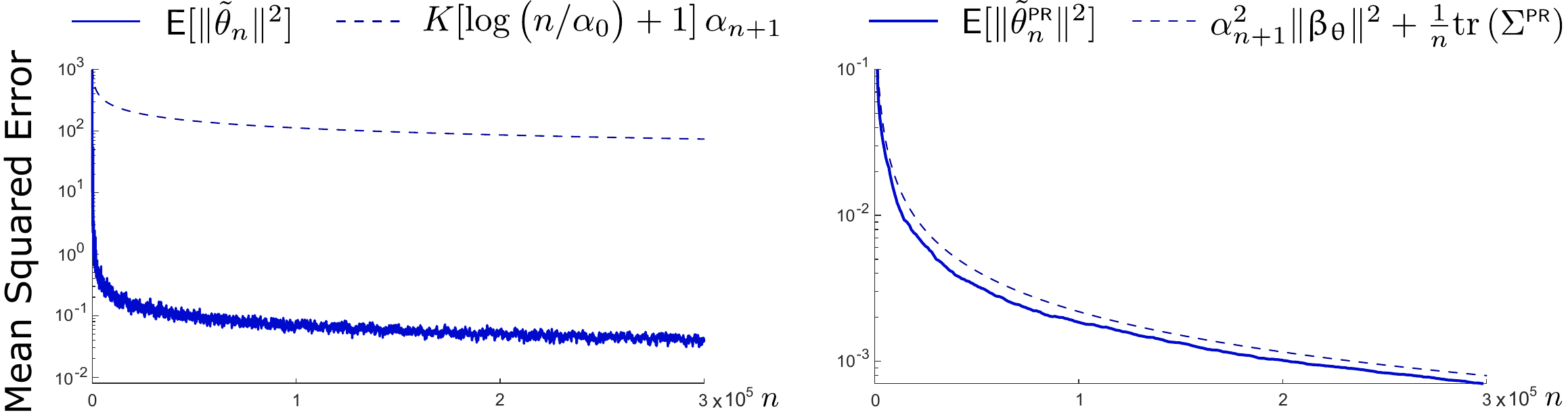}
\caption{Comparison between empirical and theoretical mean squared error with and without PR averaging.}
\label{fig:SivavsLaumey}
\end{figure}

\section{Conclusions}
\label{s:conc}

The condition $\rho>1/2$ is imposed in classical treatments of stochastic approximation because of the standard requirements,
\[
\sum_{n=1}^\infty \alpha_n = \infty \,, \qquad 
\sum_{n=1}^\infty \alpha_n^2  < \infty \,,  
\]
in which the first condition is essential for sufficient exploration.   Since SA was introduced in the 1950s, the second condition has been deemed essential to obtain convergence of the recursion.  This paper establishes convergence for the entire range of $\rho\in (0,1)$ based on an entirely new proof technique,   in which the first step is establishing  convergence rates in $L_p$ for any $p$.    
This is also the first paper to quantify the impact of \textit{bias} when $\rho<1$.

It is worth repeating that the implications for the practitioner depend on the setting:

\wham{$\circ$}  In the MD or AD settings, any value of $\rho\in (0,1)$ will achieve the optimal rate of convergence of the MSE  (that is, $O(1/n)$) for the Polyak-Ruppert averaged estimates.  

\wham{$\circ$} If   the noise is multiplicative (MU setting), then   $\rho>1/2$ is required to achieve the optimal rate of convergence after the application of averaging.

There are several paths for additional research:

\whamb
It is known that
$\lim_{n\to\infty }  n \Expect[\| \thetaPR_n - \theta^* \|^2 ]  =\trace(\SigmaPR)$ for $\rho\in (1/2,1)$ for nonlinear SA, even if  $\barUpupsilon^* \neq 0$  \cite{chedevborkonmey21}.    Can this be extended to  $\rho\in (0,1/2]$  when  $\barUpupsilon^* =0$?     Can  a version of \eqref{e:biasVariance}   be established outside of the linear setting when  $\barUpupsilon^* \neq 0$?

\whamb  In applications to reinforcement learning and gradient-free optimization, the ``noise''  $\bfPhi$ is designed by the user for the purpose of ``exploration''. Recent success stories show that it is possible to select $\bfPhi$ so that $\trace(\SigmaPR)$ and  $\barUpupsilon^*$ are minimized for a deterministic version of SA known as quasi-stochastic approximation \cite{laumey22e,laumey22d}, but can these ideas be extended to the stochastic setting?

\whamb  Theory in non-convergent settings, such as in non-convex optimization,  will require new performance metrics.

\whamb  Although this paper focused on general single timescale SA algorithms, several algorithms of interest to the machine learning and optimization research communities are built upon the framework of two time-scale SA. One example being actor-critic algorithms \cite{kontsi00,kon02,sutbar98}. SA in two timescales with Markovian noise has been studied before for several special cases \cite{karbha18,kalmounautadwai20a,faibor23,doa21}, but can we extend these results to the setting of the present paper?

\clearpage

\bibliographystyle{abbrv}
\bibliography{strings,markov_nolink,q,QSA,ESC,bandits,extras,extrarefs_caio} 

\def\cprime{$'$}\def\cprime{$'$}
\begin{thebibliography}{10}

\bibitem{allgas24}
S.~Allmeier and N.~Gast.
\newblock Computing the bias of constant-step stochastic approximation with
  {Markovian} noise.
\newblock {\em arXiv preprint arXiv:2405.14285}, 2024.

\bibitem{bacmou13}
F.~Bach and E.~Moulines.
\newblock Non-strongly-convex smooth stochastic approximation with convergence
  rate $o(1/n)$.
\newblock In {\em Proc. Advances in Neural Information Processing Systems},
  volume~26, pages 773--781, 2013.

\bibitem{benmetpri12}
A.~Benveniste, M.~M{\'e}tivier, and P.~Priouret.
\newblock {\em Adaptive algorithms and stochastic approximations}, volume~22.
\newblock Springer Science \&\ Business Media, Berlin Heidelberg, 2012.

\bibitem{bharussin18}
J.~Bhandari, D.~Russo, and R.~Singal.
\newblock A finite time analysis of temporal difference learning with linear
  function approximation.
\newblock In {\em Conference On Learning Theory}, pages 1691--1692, 2018.

\bibitem{chedevborkonmey21}
V.~{Borkar}, S.~{Chen}, A.~{Devraj}, I.~{Kontoyiannis}, and S.~{Meyn}.
\newblock The {ODE} method for asymptotic statistics in stochastic
  approximation and reinforcement learning.
\newblock {\em To appear, Annals of Applied Probability (preprint at arXiv
  e-prints:2110.14427)}, 2021.

\bibitem{bor20a}
V.~S. Borkar.
\newblock {\em Stochastic Approximation: A Dynamical Systems Viewpoint}.
\newblock {Hindustan Book Agency}, Delhi, India, 2nd edition, 2021.

\bibitem{bormey00a}
V.~S. Borkar and S.~P. Meyn.
\newblock The {ODE} method for convergence of stochastic approximation and
  reinforcement learning.
\newblock {\em SIAM J. Control Optim.}, 38(2):447--469, 2000.

\bibitem{chezhadoaclamag22}
Z.~Chen, S.~Zhang, T.~T. Doan, J.-P. Clarke, and S.~T. Maguluri.
\newblock Finite-sample analysis of nonlinear stochastic approximation with
  applications in reinforcement learning.
\newblock {\em Automatica}, 146:110623, 2022.

\bibitem{chu54o}
K.~L. Chung.
\newblock On a stochastic approximation method.
\newblock {\em The Annals of Mathematical Statistics}, 25(3):463--483, 1954.

\bibitem{devmey17b}
A.~M. Devraj and S.~P. Meyn.
\newblock Zap {Q-learning}.
\newblock In {\em Proc. of the Intl. Conference on Neural Information
  Processing Systems}, pages 2232--2241, 2017.

\bibitem{doa21}
T.~T. Doan.
\newblock Finite-time analysis and restarting scheme for linear two-time-scale
  stochastic approximation.
\newblock {\em SIAM Journal on Control and Optimization}, 59(4):2798--2819,
  2021.

\bibitem{durmounausam24}
A.~Durmus, E.~Moulines, A.~Naumov, and S.~Samsonov.
\newblock Finite-time high-probability bounds for {Polyak--Ruppert} averaged
  iterates of linear stochastic approximation.
\newblock {\em Mathematics of Operations Research}, 2024.

\bibitem{huochexie22}
D.~Huo, Y.~Chen, and Q.~Xie.
\newblock Bias and extrapolation in {Markovian} linear stochastic approximation
  with constant stepsizes.
\newblock {\em arXiv.2210.00953 {(Abstract published in Proceedings of ACM
  SIGMETRICS , pages 81--82, 2023)}}, 2022.

\bibitem{huozhachexie24}
D.~Huo, Y.~Zhang, Y.~Chen, and Q.~Xie.
\newblock The collusion of memory and nonlinearity in stochastic approximation
  with constant stepsize.
\newblock {\em arXiv:2405.16732}, 2024.

\bibitem{jinnetjor20}
C.~Jin, P.~Netrapalli, and M.~Jordan.
\newblock What is local optimality in nonconvex-nonconcave minimax
  optimization?
\newblock In {\em International conference on machine learning}, pages
  4880--4889. PMLR, 2020.

\bibitem{kalmounautadwai20a}
M.~Kaledin, E.~Moulines, A.~Naumov, V.~Tadic, and H.-T. Wai.
\newblock Finite time analysis of linear two-timescale stochastic approximation
  with {Markovian} noise.
\newblock In {\em Conference on Learning Theory; extended version
  arXiv:2002.01268}, pages 2144--2203, 2020.

\bibitem{karbha18}
P.~Karmakar and S.~Bhatnagar.
\newblock Two time-scale stochastic approximation with controlled {Markov}
  noise and off-policy temporal-difference learning.
\newblock {\em Math. Oper. Res.}, 43(1):130--151, 2018.

\bibitem{ADAM}
D.~Kingma and J.~Ba.
\newblock Adam: A method for stochastic optimization.
\newblock {\em ICLR, and preprint arXiv:1412.6980}, 2015.

\bibitem{kon02}
V.~Konda.
\newblock {\em Actor-critic algorithms}.
\newblock PhD thesis, Massachusetts Institute of Technology, 2002.

\bibitem{kontsi00}
V.~R. Konda and J.~N. Tsitsiklis.
\newblock Actor-critic algorithms.
\newblock In {\em Proc. Advances in Neural Information Processing Systems},
  pages 1008--1014, 2000.

\bibitem{konmey05a}
I.~Kontoyiannis and S.~P. Meyn.
\newblock Large deviations asymptotics and the spectral theory of
  multiplicatively regular {M}arkov processes.
\newblock {\em Electron. J. Probab.}, 10(3):61--123 (electronic), 2005.

\bibitem{laumey22e}
C.~K. Lauand and S.~Meyn.
\newblock Approaching quartic convergence rates for quasi-stochastic
  approximation with application to gradient-free optimization.
\newblock In S.~Koyejo, S.~Mohamed, A.~Agarwal, D.~Belgrave, K.~Cho, and A.~Oh,
  editors, {\em Advances in Neural Information Processing Systems}, volume~35,
  pages 15743--15756. Curran Associates, Inc., 2022.

\bibitem{laumey23a}
C.~K. Lauand and S.~Meyn.
\newblock The curse of memory in stochastic approximation.
\newblock In {\em Proc. IEEE Conference on Decision and Control (preprint
  available at arXiv 2309.02944)}, pages 7803--7809, 2023.

\bibitem{laumey22d}
C.~K. Lauand and S.~Meyn.
\newblock Quasi-stochastic approximation: Design principles with applications
  to extremum seeking control.
\newblock {\em {IEEE} Control Systems Magazine}, 43(5):111--136, Oct 2023.

\bibitem{laumey25a}
C.~K. Lauand and S.~Meyn.
\newblock Markovian foundations for quasi-stochastic approximation.
\newblock {\em SIAM Journal on Control and Optimization (pre-publication
  version arXiv 2207.06371)}, 63(1):402--430, 2025.

\bibitem{lijunmouwaijor22}
C.~J. Li, W.~Mou, M.~Wainwright, and M.~Jordan.
\newblock Root-sgd: Sharp nonasymptotics and asymptotic efficiency in a single
  algorithm.
\newblock In {\em Conference on Learning Theory}, pages 909--981. PMLR, 2022.

\bibitem{metpri87}
M.~Metivier and P.~Priouret.
\newblock Theoremes de convergence presque sure pour une classe d'algorithmes
  stochastiques a pas decroissants.
\newblock {\em Prob. Theory Related Fields}, 74:403--428, 1987.

\bibitem{CSRL}
S.~Meyn.
\newblock {\em Control Systems and Reinforcement Learning}.
\newblock {Cambridge University Press}, Cambridge, 2022.

\bibitem{mey24}
S.~Meyn.
\newblock The projected {Bellman} equation in reinforcement learning.
\newblock {\em IEEE Transactions on Automatic Control}, 69(12):8323--8337, Dec
  2024.

\bibitem{MT}
S.~P. Meyn and R.~L. Tweedie.
\newblock {\em Markov chains and stochastic stability}.
\newblock Cambridge University Press, Cambridge, second edition, 2009.
\newblock Published in the Cambridge Mathematical Library. 1993 edition online.

\bibitem{moujunwaibarjor20}
W.~{Mou}, C.~{Junchi Li}, M.~J. {Wainwright}, P.~L. {Bartlett}, and M.~I.
  {Jordan}.
\newblock On linear stochastic approximation: Fine-grained {Polyak-Ruppert} and
  non-asymptotic concentration.
\newblock {\em Conference on Learning Theory and arXiv:2004.04719}, pages
  2947--2997, 2020.

\bibitem{bacmou11}
E.~Moulines and F.~R. Bach.
\newblock Non-asymptotic analysis of stochastic approximation algorithms for
  machine learning.
\newblock In {\em Advances in Neural Information Processing Systems 24}, pages
  451--459, 2011.

\bibitem{pezheu97}
H.~Pezeshki-Esfahani and A.~Heunis.
\newblock Strong diffusion approximations for recursive stochastic algorithms.
\newblock {\em IEEE Trans. Inform. Theory}, 43(2):512--523, Mar 1997.

\bibitem{pol90}
B.~T. Polyak.
\newblock A new method of stochastic approximation type.
\newblock {\em Avtomatika i telemekhanika (in Russian). translated in Automat.
  Remote Control, 51 (1991)}, pages 98--107, 1990.

\bibitem{poljud92}
B.~T. Polyak and A.~B. Juditsky.
\newblock Acceleration of stochastic approximation by averaging.
\newblock {\em SIAM J. Control Optim.}, 30(4):838--855, 1992.

\bibitem{rambha19}
A.~Ramaswamy and S.~Bhatnagar.
\newblock Stability of stochastic approximations with `controlled {Markov}'
  noise and temporal difference learning.
\newblock {\em Trans. on Automatic Control}, 64:2614--2620, 2019.

\bibitem{rup88}
D.~Ruppert.
\newblock Efficient estimators from a slowly convergent {Robbins-Monro}
  processes.
\newblock Technical Report Tech.~Rept.~No.~781, Cornell University, School of
  Operations Research and Industrial Engineering, Ithaca, NY, 1988.

\bibitem{sri24}
R.~Srikant.
\newblock Rates of convergence in the central limit theorem for markov chains,
  with an application to td learning.
\newblock {\em arXiv preprint arXiv:2401.15719}, 2024.

\bibitem{sriyin19}
R.~Srikant and L.~Ying.
\newblock Finite-time error bounds for linear stochastic approximation and {TD}
  learning.
\newblock In {\em Conference on Learning Theory}, pages 2803--2830, 2019.

\bibitem{sutbar98}
R.~Sutton and A.~Barto.
\newblock {\em Reinforcement Learning: An Introduction}.
\newblock {MIT Press}. {On-line edition at
  \url{http://www.cs.ualberta.ca/~sutton/book/the-book.html}}, Cambridge, MA,
  1998.

\bibitem{sutbar18}
R.~Sutton and A.~Barto.
\newblock {\em Reinforcement Learning: An Introduction}.
\newblock MIT Press, Cambridge, MA, 2nd edition, 2018.

\bibitem{sze10}
C.~Szepesv{\'a}ri.
\newblock {\em Algorithms for Reinforcement Learning}.
\newblock Synthesis Lectures on Artificial Intelligence and Machine Learning.
  Morgan {\&} Claypool Publishers, 2010.

\bibitem{vasbacsch19}
S.~Vaswani, F.~Bach, and M.~Schmidt.
\newblock Fast and faster convergence of sgd for over-parameterized models and
  an accelerated perceptron.
\newblock In {\em The 22nd international conference on artificial intelligence
  and statistics}, pages 1195--1204. PMLR, 2019.

\bibitem{wai19a}
M.~J. Wainwright.
\newblock Stochastic approximation with cone-contractive operators: Sharp
  $\ell_\infty$-bounds for {$Q$}-learning.
\newblock {\em CoRR}, abs/1905.06265, 2019.

\bibitem{faibor23}
F.~Zarin~Faizal and V.~Borkar.
\newblock Functional {Central Limit Theorem} for two timescale stochastic
  approximation.
\newblock {\em arXiv e-prints}, page arXiv:2306.05723, June 2023.

\end{thebibliography}


\appendix

\clearpage

\centerline{\Large \bf Appendix}

\bigskip

The following contains proofs of the main results and further details on numerical experiments.

\section{Markovian Background}
\label{s:Markov_sec}
In this section only, we consider a single aperiodic, $\psi$-irreducible Markov chain $\bfPhi$ satisfying (A4) with unique invariant measure $\uppi$.



We often write  $\uppi(g)$ or $\barg$ for the mean  $ \int g(x) \uppi(x)$.  The function $g$ may be real- or vector-valued.
Recall the definition of $\| g\|_w$ for measurable functions $g,w$ on $\state$ in \eqref{e:Linfty_space}. If $\uppi(w)$ is finite then so is $\uppi(|g|)$, and in this case we denote
\[
\tilg(x) = g(x) - \barg \,,   \qquad  x\in\state  \, .
\]

The function  $\hag$ is said to be the solution to  \textit{Poisson's equation} with \textit{forcing function} $g$ if it satisfies
\begin{equation}
\Expect[\hag( \Phi_{n+1}) - \hag(  \Phi_{n})\mid \Phi_n = x]  =  -  \tilg(x)   \,, \qquad x\in\state 
\label{e:bigfish}
\end{equation}

The following results may be found in \cite{MT}:   Part (i) of the following follows from  Theorem 16.0.1,  and (ii)  follows from  Theorem 17.4.2. Part (iii) follows from Theorem 17.5.3 and the representation in \eqref{e:AsymptCLT_fish} is (17.47) of Section 17.4.3.

\begin{theorem}
\label[theorem]{t:Vuni+CLT}
Suppose that for each $\theta \in \Re^d$, $\bfPhi \eqdef \{\Phi_n \colon n \geq 0\}$ is  an  aperiodic, $\psi$-irreducible Markov chain satisfying Assumption~(A4).   Then,
\begin{romannum}
	\item
	$\bfPhi$ is $v$-uniformly ergodic with $v=e^V$:   
	there is $\varrho_v\in (0,1)$ and $b_v<\infty$ such that for any   $g\in L_v$,
	\[
	\big|
	\Expect_x  [ \tilg(\Phi_n) ]   \big|  \le    b_v \| g\|_v v(x)   \varrho_v^n    
	\]
	where the subscript $x$ on the left hand side indicates that $\Phi_0 = x$.
	
	\item  If $\| g \|_w <\infty$ with $w= v^\delta$ and $\delta\le 1$, then there is a solution to Poisson's equation
	\eqref{e:bigfish}.  	Moreover, 	$\| \hag\|_w <\infty$ and the solution can be chosen so that $\uppi(\hag) =0$.
	
	\item
	If $g\colon\state\to\Re^m$ satisfies  $g_i^2  \in L_v$ for each $i$,   then the $m$-dimensional stochastic process $\{ Z_n^g = n^{-1/2}     \sum_{k=1}^n \tilg(\Phi_k)   :  n\ge 1\}$ converges in distribution to a Gaussian $N(0,   \SigmaCLT^g)$ random variable,  and the second moment also converges:    
	\begin{equation}
		\lim_{n \to \infty} \Expect[ Z_n^g (Z_n^g)^\transpose] 
		=
		\lim_{n \to \infty} \frac{1}{n} \Cov\Big(\sum_{k=1}^n g(\Phi_k)\Big) = \SigmaCLT^g
		\label{e:AsymptCLT_phi}
	\end{equation}
	The $m\times m$ asymptotic covariance may be expressed in two equivalent forms: 
	
	\whamb  the sum of the auto-covariance matrices:
	\[
	\SigmaCLT^g \eqdef \sum_{k=-\infty}^\infty \Expect_\uppi[\tilg(\Phi_0) \tilg(\Phi_k)^\transpose]  
	\]
	in which $\{ \Phi_k  :  -\infty <k <\infty \}$ is a stationary version of the Markov chain. 
	
	\whamb  In terms of solutions to Poisson's equation  \eqref{e:bigfish}:  
	\begin{equation}
		\SigmaCLT^g = \Expect_\uppi[ \hag(\Phi_0)  \tilg(\Phi_0)^\transpose ] + \Expect_\uppi[   \tilg(\Phi_0) \hag(\Phi_0)^\transpose ] - \Expect_\uppi[ \tilg(\Phi_0)  \tilg(\Phi_0)^\transpose ]
		\label{e:AsymptCLT_fish}
	\end{equation}
\end{romannum}
\qed
\end{theorem}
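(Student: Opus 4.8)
The three parts are standard consequences of the geometric ergodicity theory in \cite{MT}, so the plan is to verify that Assumption~(A4) supplies exactly the hypotheses those theorems require, and then to assemble the Poisson-equation martingale decomposition for part~(iii). No genuinely new estimate is needed beyond bookkeeping between the $v$-, $w$-, and $L_2$-norms.

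First I would convert the multiplicative drift inequality (DV3) into the geometric (Foster--Lyapunov) drift condition used in \cite{MT}. Writing $v=e^V$, (DV3) reads $Pv(x)\le v(x)\exp(-W(x)+b\ind_C(x))$. Since $W\ge 1$ everywhere, on $C^c$ this gives $Pv\le e^{-1}v$, while on $C$ the bound $\sup_{x\in C}V(x)<\infty$ guaranteed by (A4) makes $v$ bounded on $C$; together these yield $Pv\le \lambda v + b'\ind_C$ with $\lambda=e^{-1}<1$ and $C$ small. Theorem~16.0.1 of \cite{MT} then delivers $v$-uniform ergodicity, i.e.\ $\|P^k(x,\varble)-\uppi\|_v\le b_v\,v(x)\varrho_v^k$ for constants $b_v<\infty$, $\varrho_v\in(0,1)$. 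Part~(i) follows by pairing this with any $g\in L_v$, since $|\Expect_x[\tilg(\Phi_k)]|=|P^kg(x)-\uppi(g)|\le \|g\|_v\,\|P^k(x,\varble)-\uppi\|_v$.

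For part~(ii) I would take the series candidate $\hag=\sum_{k\ge 0}P^k\tilg$ and verify both convergence and the $w$-bound. The key input is that $v$-uniform ergodicity upgrades to $w$-uniform ergodicity for $w=v^\delta$ with $\delta\le 1$ (a Jensen/interpolation step, which is the content of Theorem~17.4.2 of \cite{MT}); this provides a summable bound $|P^k\tilg(x)|\le C\|g\|_w\,w(x)\varrho^k$, so the series converges absolutely, $\|\hag\|_w<\infty$, and termwise application of $P$ gives $P\hag-\hag=-\tilg$, which is \eqref{e:bigfish}. Subtracting $\uppi(\hag)$ normalizes the solution to mean zero without affecting the identity.

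For part~(iii) the plan is the standard martingale decomposition. The hypothesis $g_i^2\in L_v$ says $g_i\in L_{v^{1/2}}$, so part~(ii) with $w=v^{1/2}$ supplies $\hag$ with $\|\hag\|_{v^{1/2}}<\infty$, hence $\hag_i^2\in L_v$. Setting $m_k=\hag(\Phi_k)-P\hag(\Phi_{k-1})$ produces a stationary martingale difference sequence with $\sum_{k=1}^n\tilg(\Phi_k)=\sum_{k=1}^n m_k + \hag(\Phi_0)-\hag(\Phi_n)$, whose boundary terms are $O(1)$ in $L_2$ and vanish after dividing by $\sqrt n$. The martingale CLT (Theorem~17.5.3 of \cite{MT}) then gives convergence in distribution to $N(0,\SigmaCLT^g)$ with $\SigmaCLT^g=\Expect_\uppi[m_k m_k^\transpose]$; expanding this quadratic form in $\hag$ and $\tilg$ yields the representation \eqref{e:AsymptCLT_fish}, and the stationary telescoping identity gives the equivalent auto-covariance form. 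The main obstacle is the \emph{second-moment} convergence in \eqref{e:AsymptCLT_phi} rather than mere convergence in distribution: this requires uniform integrability of $\|n^{-1/2}\sum_k\tilg(\Phi_k)\|^2$, and it is precisely where the stronger hypothesis $g_i^2\in L_v$ (not just $g_i\in L_{v^{1/2}}$) is consumed. I would tie this to the $L_2$ bounds on the partial sums implied by $v$-ergodicity, which is the step deserving the most care.
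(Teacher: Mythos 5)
Your proposal is correct and takes essentially the same route as the paper, whose entire proof is a citation to Theorems 16.0.1, 17.4.2, and 17.5.3 of \cite{MT}: you verify the hypotheses of exactly those results (DV3 $\Rightarrow$ geometric drift $\Rightarrow$ $v$-uniform ergodicity, the series solution of Poisson's equation, and the martingale decomposition with the CLT plus second-moment convergence). The only small repair needed: (A4) bounds $V$ on sublevel sets $S_W(r)$, not on the small set $C$ of (DV3), so the geometric drift condition should be assembled with $C$ replaced by $S_W(r)$ for $r\ge b+1$, which (A4) guarantees is small (or empty) with $V$ bounded on it.
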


\section{Noise Decomposition for General SA}
In a similar fashion  to the notation employed in \Cref{s:Markov_sec}, we use $\barh: \Re^d \to \Re^d$ to denote for each $\theta \in \Re^d$, $\barh(\theta) \eqdef \int h(\theta,x) \uppi_\theta(x)$ for the Markov chain $\bfPhi^\theta$.

Poisson's equation \eqref{e:bigfish} is the key workhorse in establishing any of the main results in the present paper. 
However,  we require a slight extension of the definition \eqref{e:bigfish}:   we are interested in solutions  with forcing functions of the form $h: \Re^d \times \state \to \Re^d$. Solutions for the joint process $\bfPsi$ will not  be required, but instead we obtain solutions for each $\theta$. The following abuse of notation will be employed throughout the paper: $\hah$ is said to be the solution to Poisson's equation with forcing function $h$ if it satisfies, for each fixed $\theta \in  \Re^d$,
\begin{equation}
\Expect[\hah(\theta, \Phi^\theta_{n+1}) - \hah(\theta, \Phi^\theta_{n})\mid \Phi^\theta_n = x] 
= 
- h(\theta, x) + \barh(\theta) 
\,, \quad x \in \state 
\label{e:fish-h}
\end{equation} 
This notation is also extended to matrix-valued functions.      

The first important example is $h = f$ that is part of the definition of the SA recursion.  The following result is \cite[Prop. 7~(ii)]{chedevborkonmey21} and is a companion to 
\Cref{t:Vuni+CLT}~(iii).

\begin{proposition}
\label[proposition]{t:bounds-H}	Suppose that  (A2) and (A4) hold. Then,    $\haf \colon\Re^d\times\state\to\Re^d$ exists solving 	\eqref{e:fish-h} with forcing function $f$, in which $\Expect_{\uppi_\theta}[\haf(\theta,\Phi^\theta_n)] =\Zero$ for each   $\theta \in\Re^d$, 
and  for a constant $b_f$  and all $\theta,\theta' \in \Re^d$ and $x \in \state$:
\begin{subequations}
	\begin{align}
		\| \haf(\theta, x)\| 
		&\leq 
		b_f\bigl(1+V(x) \bigr) \bigl[ 1+\|\theta\| \bigr] 
		\\
		\| \haf(\theta, x)  - \haf(\theta', x)  \| 
		&\leq
		b_f\bigl(1+V(x) \bigr)  \|\theta - \theta'\| 
	\end{align} 
	\label{e:Lip_haf}
\end{subequations}
\end{proposition}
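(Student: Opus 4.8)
The plan is to solve Poisson's equation \eqref{e:fish-h} separately for each fixed $\theta$ by applying a single linear solution operator to the forcing $f(\theta,\cdot)$, and then to read off both bounds in \eqref{e:Lip_haf} from the corresponding bounds on the forcing. First I would record the consequences of (A2): writing $f(\theta,x) = [f(\theta,x)-f(0,x)] + f(0,x)$ gives the pointwise estimates
\[
\| f(\theta,x)\| \le L(x)\bigl(1+\|\theta\|\bigr), \qquad \| f(\theta,x) - f(\theta',x)\| \le L(x)\,\|\theta-\theta'\|.
\]
The role of (A4), specifically \eqref{e:L_is_oW}, is to convert these $L$-weighted bounds into $W$-weighted bounds: since the supremum of $L/W$ over $\{W\ge n\}$ tends to zero and $L/W$ is controlled on the small sublevel set $S_W(n)$, one checks $L\in L_W$ with $\|L\|_W =: c_L < \infty$. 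Hence, applied component-wise, $\|f(\theta,\cdot)\|_W \le c_L(1+\|\theta\|)$ and $\|f(\theta,\cdot)-f(\theta',\cdot)\|_W \le c_L\|\theta-\theta'\|$.

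The crux is a Poisson bound that is \emph{linear} in $V$, rather than of the exponential form $v^\delta = e^{\delta V}$ supplied directly by \Cref{t:Vuni+CLT}(ii). For this I would first apply Jensen's inequality to the (DV3) bound of (A4) to obtain the ordinary drift inequality $P V \le V - W + b\ind_C$. Under this drift, the comparison theorem underlying \Cref{t:Vuni+CLT}(ii) yields, for every scalar $g$ with $\|g\|_W<\infty$, a solution $\hag$ of \eqref{e:bigfish} normalized by $\uppi(\hag)=0$ and obeying a bound of the form $\|\hag\|_{1+V} \le c_0\,\|g\|_W$, where $c_0=c_0(V,W,C,b)$ is a fixed constant; this is the canonical (fundamental-kernel) solution and is therefore linear in $g$. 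Applying it coordinate-wise to $g = f(\theta,\cdot)$ produces $\haf(\theta,\cdot)$ with $\Expect_\uppi[\haf(\theta,\Phi_n)]=\Zero$ and
\[
\|\haf(\theta,x)\| \le c_0\,\|f(\theta,\cdot)\|_W\,\bigl(1+V(x)\bigr) \le c_0 c_L \bigl(1+V(x)\bigr)\bigl(1+\|\theta\|\bigr),
\]
which is the first inequality in \eqref{e:Lip_haf} with $b_f = c_0 c_L$.

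The Lipschitz bound then follows with no extra work from linearity of the solution operator: since $\haf(\theta,\cdot)$ and $\haf(\theta',\cdot)$ are produced by the same linear map, their difference is the canonical solution of Poisson's equation with forcing $f(\theta,\cdot)-f(\theta',\cdot)$. Invoking the same estimate gives $\|\haf(\theta,x)-\haf(\theta',x)\| \le c_0\|f(\theta,\cdot)-f(\theta',\cdot)\|_W\,(1+V(x)) \le c_0 c_L\,(1+V(x))\|\theta-\theta'\|$, the second inequality in \eqref{e:Lip_haf}.

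I expect the main obstacle to be precisely this linear-in-$V$ Poisson bound: \Cref{t:Vuni+CLT}(ii) delivers control only in the exponential weight $v^\delta$, whereas \eqref{e:Lip_haf} demands the sharper weight $1+V$. Securing it requires the passage (DV3)$\,\Rightarrow\, PV \le V-W+b\ind_C$ via Jensen, followed by the comparison/fundamental-kernel estimate for forcing functions in $L_W$. By contrast, verifying that a single linear solution operator is used throughout (so that the Lipschitz bound is inherited by linearity), and that joint measurability in $(\theta,x)$ holds, are comparatively routine, the latter following from continuity of $f$ in $\theta$.
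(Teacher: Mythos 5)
Your proposal is correct and follows essentially the same route as the paper's proof: the paper likewise notes that \Cref{t:Vuni+CLT} is inapplicable because the forcing is only $W$-bounded, applies Jensen's inequality to (DV3) to obtain the linear drift $PV \le V - W + b\ind_C$, and then invokes the comparison/fundamental-kernel bound (the cited Glynn--Meyn Theorem~2.3), which is exactly your estimate $\|\hag\|_{1+V} \le c_0 \|g\|_W$. Your explicit use of linearity of the solution operator for the Lipschitz bound, and the verification via \eqref{e:L_is_oW} (cf.\ \Cref{t:Lemma2clt}) that $L \in L_W$, simply fill in details the paper leaves implicit.
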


\begin{corollary}
\label[corollary]{t:bounds-ha}	Suppose the assumptions of \Cref{t:bounds-H} along with (A5). Then,    $\haA \colon\Re^{d \times d}\times\state\to\Re^{d \times d}$ exists solving \eqref{e:fish-h} with forcing function $A$, in which   
$\Expect_{\uppi_\theta}[\haA(\theta,\Phi^\theta_n)] =\Zero$ for each   $\theta \in\Re^d$, 
and  for a constant $b_A$  and all $\theta,\theta' \in \Re^d$ and $x \in \state$:
\begin{subequations}
	\begin{align}
		\| \haA(\theta, x)\|_F 
		&\leq 
		b_A\bigl(1+V(x) \bigr) \bigl[ 1+\|\theta\| \bigr] 
		\\
		\| \haA(\theta, x)  - \haA(\theta', x)  \|_F 
		&\leq
		b_A\bigl(1+V(x) \bigr)  \|\theta - \theta'\| 
	\end{align} 
	\label{e:Lip_haA}
\end{subequations}
\qed
\end{corollary}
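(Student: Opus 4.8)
The plan is to reuse the proof of \Cref{t:bounds-H} essentially verbatim, replacing the $\Re^d$-valued forcing function $f$ by the matrix-valued forcing function $A(\theta,x)\eqdef \partial_\theta f(\theta,x)$, whose steady-state mean is $\barA(\theta)=\partial\barf(\theta)$ by (A5). Both Poisson's equation \eqref{e:fish-h} and the Glynn--Meyn existence theorem are linear in the forcing function, so I would identify $\Re^{d\times d}$ with $\Re^{d^2}$ and solve \eqref{e:fish-h} componentwise: for each entry $(i,j)$ and each fixed $\theta$, apply \cite[Th.~2.3]{glymey96a} to the scalar forcing function $A_{ij}(\theta,\cdot)$, obtaining a solution $\haA_{ij}(\theta,\cdot)$ with $\uppi(\haA_{ij}(\theta,\cdot))=0$ and a bound weighted by $1+V$. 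Reassembling the entries produces $\haA$ with the normalization $\Expect_\uppi[\haA(\theta,\Phi_n)]=\Zero$, and passing from the entrywise bounds to the Frobenius norm merely absorbs a factor $d$ into the constant $b_A$.

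Before invoking \cite[Th.~2.3]{glymey96a} I would check that $A$ inherits the two structural properties of $f$ used in \Cref{t:bounds-H}. The growth estimate is immediate: the Lipschitz bound in (A2) forces $\|A(\theta,x)\|\le L(x)$ in operator norm for every $\theta$, hence $\|A(\theta,x)\|_F\le \sqrt d\,L(x)$, which is already stronger than the stated bound $b_A(1+V(x))(1+\|\theta\|)$ and controls each entry $A_{ij}$ by $L(x)$ uniformly in $\theta$. The drift input is unchanged from the proof of \Cref{t:bounds-H}: Jensen applied to (DV3) yields $\Expect[V(\Phi_{k+1})\mid\Phi_k=x]\le V(x)-W(x)+b\ind_C(x)$, and \eqref{e:L_is_oW} gives $L=o(W)$, so each $A_{ij}(\theta,\cdot)$ lies in the weighted space to which \cite[Th.~2.3]{glymey96a} applies, uniformly in $\theta$ and $(i,j)$. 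Writing the canonical solution as $\haA(\theta,x)=\sum_{k\ge 0}\Expect_x[A(\theta,\Phi_k)-\barA(\theta)]$ then delivers the first bound in \eqref{e:Lip_haA}.

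The single new ingredient, which I expect to be the main obstacle, is the Lipschitz-in-$\theta$ estimate (the second bound in \eqref{e:Lip_haA}). It does not follow from (A2), which constrains the \emph{size} of the Jacobian but not its \emph{modulus of continuity} in $\theta$, nor from (A5), which governs only the \emph{averaged} Jacobian $\barA$. I would therefore establish the sample-path bound $\|A(\theta,x)-A(\theta',x)\|_F\le c\,L(x)\,\|\theta-\theta'\|$ --- the matrix analogue, at the level of each $\Phi$, of the Lipschitz hypothesis (A2) imposed on $f$ --- as the natural sample-level counterpart of the smoothness of $\barf$ asserted in (A5). Granting this, linearity of the solution map gives $\haA(\theta,x)-\haA(\theta',x)=\sum_{k\ge 0}\Expect_x[(A(\theta,\Phi_k)-\barA(\theta))-(A(\theta',\Phi_k)-\barA(\theta'))]$, and the same $v$-uniform ergodicity bookkeeping used for $\haf$ converts the per-sample modulus $c\,L(x)\,\|\theta-\theta'\|$ into the weighted bound $b_A(1+V(x))\,\|\theta-\theta'\|$. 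I note that in the linear setting of \Cref{s:LinSA}, where $A(\theta,x)=A(x)$ is independent of $\theta$, this modulus vanishes identically and the estimate is trivial, so the difficulty is felt only in the genuinely nonlinear regime.
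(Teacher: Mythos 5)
Your route coincides with the paper's: \Cref{t:bounds-ha} is stated there without any separate proof, precisely because it is regarded as immediate from \Cref{t:bounds-H} --- Jensen's inequality applied to (DV3) to obtain the drift bound $\Expect[V(\Phi_{k+1})\mid \Phi_k = x] \le V(x) - W(x) + b\ind_C(x)$, followed by \cite[Th.~2.3]{glymey96a}, now with forcing function $A$ in place of $f$. Your componentwise invocation of Glynn--Meyn, the zero-mean normalization, and the absorption of dimension factors into $b_A$ is exactly that argument, and your growth estimate ($\|A(\theta,x)\|\le L(x)$ uniformly in $\theta$ from (A2), with $L$ dominated by a multiple of $1+V$ via \eqref{e:L_is_oW} and the drift inequality) is how the first bound in \eqref{e:Lip_haA} is obtained.

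Where you depart from the paper is in your treatment of the second bound in \eqref{e:Lip_haA}, and here your observation is correct and worth keeping: converting the Poisson-equation representation of $\haA(\theta,x)-\haA(\theta',x)$ into a bound of order $\|\theta-\theta'\|$ requires the forcing-function difference $A(\theta,\cdot)-A(\theta',\cdot)$ to have weighted norm $O(\|\theta-\theta'\|)$, i.e., a \emph{pathwise} Lipschitz condition on the Jacobian of $f$; this follows neither from (A2), which bounds the Jacobian's size, nor from (A5), which concerns only the averaged Jacobian $\barA$. The paper leaves this hypothesis implicit: it asserts the corollary without proof and then relies on exactly this Lipschitz bound in the mean-value-theorem step of \Cref{t:Up_haAf}. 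So your proposal does not contain a gap that the paper's own argument closes; rather, you have made explicit an assumption the paper uses tacitly. One caveat on wording: you say you would ``establish'' the sample-path bound, but it cannot be derived from (A1)--(A5) --- it must be imposed as an additional smoothness hypothesis on $f$ (trivially satisfied in the linear setting, as you note, which is where the corollary does its real work).
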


Solutions to Poisson's equation enable ``whitening'' of $\bfDelta$ through the noise decomposition of {M\'etivier} and {Priouret} \cite{metpri87},
\begin{lemma}
\label[lemma]{t:Met_Pri_Exp}
Under (A2), the representation \eqref{e:DeltaDecomp} holds with
\begin{subequations}
	\begin{align}
		\MD_{k+1} 
		& \eqdef
		\haf(\theta_k,\Phi_{k+1}) - \Expect[\haf(\theta_k,\Phi_{k+1}) \mid \clF_{k}] 
		\\
		\clT_{k+1}  
		&\eqdef
		\uppsi(\theta_{k+1},\Phi_{k+1}) 
		\\
		\Upupsilon_{k+1} 
		&\eqdef
		\frac{1}{\alpha_{k+1}} [\uppsi(\theta_{k+1},\Phi_{k+1}) - \uppsi(\theta_k,\Phi_{k+1})]
		\label{e:UpDef}
	\end{align}
	with $\uppsi(\theta,x) \eqdef f(\theta,x) - \haf(\theta,x)$ and where $\clF_k \eqdef \sigma(\theta_0, \Phi_1,\cdots,\Phi_k)$.
	\label{e:decomp_def}	
\end{subequations}
\qed
\end{lemma}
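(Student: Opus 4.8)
The plan is to verify the identity \eqref{e:DeltaDecomp} by direct substitution of the definitions \eqref{e:decomp_def} into its right-hand side, and then to collapse the result using Poisson's equation. Existence of a solution $\haf$ to \eqref{e:fish-h} with forcing function $f$ is guaranteed by \Cref{t:bounds-H}, so each of the four terms is well defined. Recalling from \eqref{e:NoisyEuler} that $\Delta_{k+1} = f(\theta_k,\Phi_{k+1}) - \barf(\theta_k)$, the goal is to show that the sum $\MD_{k+2} - \clT_{k+2} + \clT_{k+1} - \alpha_{k+1}\Upupsilon_{k+2}$ reduces to exactly this quantity.

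First I would expand each term. From \eqref{e:UpDef} one has $-\alpha_{k+1}\Upupsilon_{k+2} = \haf(\theta_{k+1},\Phi_{k+2}) - \haf(\theta_k,\Phi_{k+2})$, while $\clT_{k+2} = \haf(\theta_{k+1},\Phi_{k+2})$, $\clT_{k+1} = \haf(\theta_k,\Phi_{k+1})$, and $\MD_{k+2}$ carries the term $\haf(\theta_k,\Phi_{k+2})$ together with its conditional mean. The key structural observation, and the entire point of the decomposition, is that on summing, the two $\haf(\theta_{k+1},\Phi_{k+2})$ contributions cancel and the two $\haf(\theta_k,\Phi_{k+2})$ contributions cancel, so that all dependence on $\Phi_{k+2}$ telescopes away except through a conditional expectation. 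What remains is the one-step expression $\haf(\theta_k,\Phi_{k+1}) - \Expect[\haf(\theta_k,\Phi_{k+2}) \mid \clF_{k+1}]$.

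It remains to evaluate this conditional expectation, and this is the only genuinely subtle point. Poisson's equation \eqref{e:fish-h} is stated for a fixed deterministic parameter, whereas here it must be applied at the random argument $\theta_k$. I would resolve this by a freezing argument: since $\theta_k$ is $\clF_k$-measurable, hence $\clF_{k+1}$-measurable, and $\Phi_{k+1}$ is $\clF_{k+1}$-measurable, the Markov property of $\bfPhi$ identifies $\Expect[\haf(\theta_k,\Phi_{k+2}) \mid \clF_{k+1}]$ with the deterministic map $(\theta,x) \mapsto \Expect[\haf(\theta,\Phi_{k+2}) \mid \Phi_{k+1}=x]$ evaluated at $(\theta,x) = (\theta_k,\Phi_{k+1})$. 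Applying \eqref{e:fish-h} to this frozen pair gives $\haf(\theta_k,\Phi_{k+1}) - f(\theta_k,\Phi_{k+1}) + \barf(\theta_k)$. Substituting back cancels the lingering $\haf(\theta_k,\Phi_{k+1})$ term and leaves precisely $f(\theta_k,\Phi_{k+1}) - \barf(\theta_k) = \Delta_{k+1}$, which completes the verification. The algebra itself is a routine cancellation; the main obstacle, such as it is, lies entirely in the measurability-and-Markov justification for applying Poisson's equation at the frozen value $\theta_k$.
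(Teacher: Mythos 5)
Your proof is correct and is exactly the standard verification underlying this lemma, which the paper states without proof (deferring to M\'etivier--Priouret): direct substitution of the definitions, cancellation of the $\haf(\theta_{k+1},\Phi_{k+2})$ and $\haf(\theta_k,\Phi_{k+2})$ terms, and an application of Poisson's equation \eqref{e:fish-h} at the frozen parameter $\theta_k$, justified by $\clF_{k+1}$-measurability of $\theta_k$ and the Markov property. Your identification of the freezing step as the only nontrivial point is apt; note only that existence of $\haf$ via \Cref{t:bounds-H} uses (A4) in addition to the (A2) cited in the lemma's statement.
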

The sequence $\{\clW_{k+1}\}$ is a martingale difference sequence and $\{\Upupsilon_{k+1}\}$ is the major source of bias for stochastic approximation with constant step-size \cite{laumey23a}.
When the noise is AD, $\haf$ is only a function of $\bfPhi$, giving $\Upupsilon  \equiv 0$.

In view of the decomposition in \Cref{t:Met_Pri_Exp}, the following corollary to \Cref{t:bounds-H} is obtained,
\begin{corollary}
\label[corollary]{t:Met_Pri_bounds}
Under (A2) and (A4), we obtain the following bounds for the terms in the representation \eqref{e:DeltaDecomp}: for a constant $\bdd{t:Met_Pri_bounds}$,
\begin{subequations}
	\begin{align}
		\|\MD_{k+1}\| 
		& \leq
		\bdd{t:Met_Pri_bounds}
		[2+\Expect[  V(\Phi_{k+1}) | \clF_{k}   ]  +   V(\Phi_{k+1})] [1 + \| \theta_k \|]
		\\
		\|\clT_{k+1}\|  
		&\leq
		\bdd{t:Met_Pri_bounds}
		[1 +   V(\Phi_{k+1})] [1 + \| \theta_k \|]
		\\
		\|\Upupsilon_{k+1}\| 
		&\leq
		\bdd{t:Met_Pri_bounds} \| L \|_{v^{\frac{1}{p}}}
		[1 +   V(\Phi_{k+1})] L(\Phi_{k+1}) [1 + \| \theta_k \|]
	\end{align}	
	\label{e:Met_Pri_bounds}
\end{subequations}
\qed
\end{corollary}

\section{Moment Bounds}

\label{s:momentproofs}

The standard ODE approximation of an SA recursion begins with the introduction of   ``sampling times'' for the mean flow \eqref{e:meanflow}: $\tau_{k+1} = \tau_k + \alpha_{k+1}$, with $\tau_0 = 0$  \cite{benmetpri12,bor20a}.   Then a fixed time horizon $T>0$ is specified,  along with the sequence  $T_{n+1} = \min \{\tau_k \colon \tau_k \geq T_n + T \} $ for $n\ge 0$, initialized with $T_0 = 0$. It follows that the interval $[T_n,T_{n+1}]$ satisfies $T \leq T_{n+1} - T_n \leq T + \baralpha$ for each $n$, where $\baralpha \eqdef  \sup_k \alpha_k$.  
We let $m_n$ denote the integer satisfying $T_n = \tau_{m_n}$, and  $\{ \odestate^{(n)}_t   :  t\ge T_n \}$ denote the solution to the mean flow with initial condition $ \odestate^{(n)}_{T_n} = \theta_{m_n}$.     Convergence theory of SA proceeds by comparing  $\{\theta_k  :  m_n \le k\le m_{n+1} \}$   and 
$\{\odestate^{(n)}_{\tau_k}  :  m_n \le k\le m_{n+1} \}$.

We establish moment bounds of the form \eqref{e:Pmoment_SA} by extending the approach of \cite{chedevborkonmey21}, based on a Lyapunov function of the form
$\clV(\theta,x) = \clV^\circ(\theta) + \beta \clV^\circ(\theta)v_+(x)$ in which $\beta>0$,   and    %
\[
\clV^\circ(\theta) = 1+\| \theta \|^p  
\, , 
\quad 
v_+(x) = \Expect[ \exp(V(\Phi_{m_n+1})      )  \mid \Phi_{m_n} = x  ]   %
\, .
\]

A Lyapunov bound is obtained in the following:   
\begin{proposition}
\label[proposition]{t:LyapContract}
Suppose the assumptions of \Cref{t:BigBounds} hold.   Then for each $\rho\in(0,1)$ and $p\ge 1$ there are  constants $\varrho_0<1$,   $b_\nu$,  and $n_0\geq 1$ such that
\begin{equation}
	\Expect[\clV( \theta_{m_{n+1}}, \Phi_{m_{n+1}} )  \mid \Phi_{m_n}  ] \leq \varrho_0 \clV( \theta_{m_n}, \Phi_{m_n} ) + b_\nu
	\, , 
	\quad n\geq n_0
	\label{e:Lyap_contraction}
\end{equation}
\qed
\end{proposition}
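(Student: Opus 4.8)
The plan is to view the sampled sequence $\{(\theta_{m_n},\Phi_{m_n})\}$ as a noisy time-$T$ map of the mean flow \eqref{e:meanflow} and to build a single drift inequality in which two contraction mechanisms cooperate. Global asymptotic stability of the mean flow together with stability of the ODE@$\infty$ (both from (A3)) contracts the $\theta$-marginal through $\clV^\circ(\theta)=1+\|\theta\|^p$, while the multiplicative drift (DV3) contracts the Markovian factor $v_+$. The product term $\beta\clV^\circ(\theta)v_+(x)$ is present precisely so that the noise accumulated over a block is absorbed by the combination of these effects; the free parameters are the horizon $T$ (equivalently the threshold $n_0$, since the block length $m_{n+1}-m_n\approx T/\alpha_{m_n}$ grows with $n$) and the weight $\beta>0$.

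First I would isolate the purely deterministic part. Under (A3), the standard scaling argument based on stability of the ODE@$\infty$ produces a fixed horizon $T$ and constants $\varrho_1<1$, $b_0<\infty$ with
\[
\clV^\circ(\odestate^{(n)}_{T_{n+1}}) \le \varrho_1\clV^\circ(\theta_{m_n}) + b_0,
\]
uniformly in the initial value $\odestate^{(n)}_{T_n}=\theta_{m_n}$; stability of the ODE@$\infty$ is what makes the factor $\varrho_1$ independent of $\|\theta_{m_n}\|$.

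Next I would bound the gap between the iterates and this flow. Expanding \eqref{e:NoisyEuler} over $[T_n,T_{n+1}]$ and substituting the M\'etivier--Priouret decomposition (\Cref{t:Met_Pri_Exp}) with the bounds on $\haf$ from \Cref{t:bounds-H}, the accumulated noise splits into a martingale part, telescoping Poisson terms, and a bias term of order $\alpha_{m_n}$, each controlled by factors $(1+V(\Phi_k))(1+\|\theta_k\|)$. Because $\rho<1$ and $T$ is fixed, the step-sizes are asymptotically constant across a block ($\alpha_{m_{n+1}}/\alpha_{m_n}\to1$), so a discrete Gr\"onwall estimate together with (A2) gives $\clV^\circ(\theta_{m_{n+1}})\le\clV^\circ(\odestate^{(n)}_{T_{n+1}})+\clE_n$, where $\clE_n$ is dominated by $\alpha_{m_n}$ times a block average of terms of the form $(1+V(\Phi_k))^q\,\clV^\circ(\theta_k)$.

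The main obstacle, and the motivation for the multiplicative Lyapunov function, is to absorb $\Expect[\clE_n\mid\clF_{m_n}]$. The growth of $\clE_n$ in $\Phi$ is governed by $L$ and $V$ and is not dominated by $v=e^V$ alone; the decisive hypothesis is $L=o(W)$ from \eqref{e:L_is_oW}. Summing the multiplicative drift (DV3) across the block bounds $\Expect[\sum_k W(\Phi_k)v(\Phi_k)\mid\clF_{m_n}]$ and the related error averages by a constant multiple of $v_+(\Phi_{m_n})$, and $L=o(W)$ converts the $L$-growth in $\clE_n$ into a term of strictly smaller order. The same summation yields a genuine contraction of the cross term, $\Expect[\clV^\circ(\theta_{m_{n+1}})v_+(\Phi_{m_{n+1}})\mid\clF_{m_n}]\le\varrho_2\clV^\circ(\theta_{m_n})v_+(\Phi_{m_n})+b''$ with $\varrho_2<1$. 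Combining these with the deterministic contraction and choosing first $\beta$ small---so the $\clV^\circ$-drift dominates the coupling---and then $n_0$ large---so that $\alpha_{m_n}$ is small for $n\ge n_0$---collapses everything to \eqref{e:Lyap_contraction} with $\varrho_0$ strictly below one and $b_\nu<\infty$. The technical care lies in making every constant uniform in $n\ge n_0$, and in particular ensuring that the growth of the block length $m_{n+1}-m_n$ does not erode the geometric rate.
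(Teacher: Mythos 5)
Your proposal shares the paper's skeleton---sampling-time blocks of horizon $T$, a mean-flow contraction for the $\clV^\circ$ factor, iterated (DV3) for the Markovian factor, and a final choice of small $\beta$---but it has a genuine gap exactly where the paper does its real work: the unbounded, state-dependent Lipschitz constant $L(\Phi)$. When you compare $\theta_k$ to the flow $\odestate^{(n)}$ across a block, the discrete Gr\"onwall argument does \emph{not} produce an additive error ``$\clE_n$ of order $\alpha_{m_n}$ times a block average of terms $(1+V(\Phi_k))^q\,\clV^\circ(\theta_k)$''; it produces a \emph{multiplicative} random factor of the form $\exp\bigl(\sum_j \alpha_j L(\Phi_j)\bigr)$, because the per-step Lipschitz modulus in (A2) is $L(\Phi_j)$, not a constant. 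Raised to the power $p$, this is precisely the term $\exp\bigl(p\delta_L\sum_j\alpha_j W(\Phi_j)\bigr)$ appearing in \Cref{t:Prop12ii_CLT}, and the whole difficulty is to bound its conditional expectation jointly with $e^{V}$. Your proposal offers only linear moments, $\Expect[\sum_k W(\Phi_k)v(\Phi_k)\mid\clF_{m_n}]$, and a linear moment bound cannot dominate the exponential of a sum; what is required is the multiplicative ergodic bound of \Cref{t:prop3CLT}, obtained by iterating (DV3) across the block, which is then combined with the pathwise bound in \Cref{t:Prop12i_CLT}.

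This is also where the quantifier ``for each $p\ge 1$'' must be earned, and your write-up never addresses it. The exponential moment bound is available only under the coupling $p\,\delta_L(T+1)<1$, i.e.\ the standing constraint \eqref{e:pAssumptions} $p<\pmax$: the prescribed $p$ and the horizon $T$ fixed by the mean-flow contraction \eqref{e:TAssumption} are compatible only because the condition \eqref{e:L_is_oW} ($L=o(W)$) permits replacing $W$ by $W_r=\max\{r,W\}$ so that $\delta_L(r)$ is as small as desired (\Cref{t:Lemma2clt}). In your argument $L=o(W)$ is invoked merely to claim the $L$-growth is ``of strictly smaller order,'' which is not the role it plays: it is what makes the relevant exponential moment finite for an \emph{arbitrary} power $p$. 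Once these ingredients are in place, the remaining assembly you describe---contraction of $\clV^\circ$ via \Cref{t:contract-4}, contraction of the cross term, then $\beta$ small and $n_0$ large---does match the paper's final step.
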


A similar result appears in   \cite{chedevborkonmey21}   with $\rho \in (1/2,1)$ and $p=4$.    In the following we explain how these assumptions may be relaxed.

We first note that (A4) is far stronger than what is assumed in  \cite{chedevborkonmey21},  in which \eqref{e:L_is_oW} is relaxed to 
$\delta_L \eqdef  |L|_W \eqdef \sup_x   |L(x)|/W(x) < \infty$, and $\delta_L$ is assumed sufficiently small.     

The assumption that $\delta_L$ is small may be guaranteed 
under (A4) by applying 	\eqref{e:L_is_oW}:   

\begin{lemma}
\label[lemma]{t:Lemma2clt} 
If (A4) holds, then for any $\delta_0>0$ there is $r<\infty$ such that (A4) holds with $(V, W_r)$  using $W_r (x):= \max\{r,W(x)\}$,  and 
\[
\delta_L(r) \eqdef 
\sup_x \frac{1}{W_r(x)} |L(x)|  \le \delta_0  
\]
\end{lemma}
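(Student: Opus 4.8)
The plan is to prove the two assertions separately: first that the pair $(V,W_r)$ again satisfies (A4), and second the quantitative bound $\delta_L(r)\le\delta_0$. Throughout I would use three elementary facts about $W_r=\max\{r,W\}$: since $W\ge 1$ we have $W_r\ge\max\{r,1\}\ge 1$, so $W_r\colon\state\to[1,\infty)$; pointwise $W_r\ge W$; and the gap $W_r-W=(r-W)^+$ is bounded by $r$ and vanishes off the sublevel set $S_W(r)=\{x:W(x)\le r\}$. Since $L$ bounds norms in (A2) we also have $L\ge 0$, so $|L|=L$.

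For the first assertion I would check each clause of (A4) in turn, the only one requiring work being (DV3). Writing the hypothesis as $\Expect[\exp(V(\Phi_{k+1}))\mid\Phi_k=x]\le\exp(V(x)-W(x)+b\ind_C(x))$, I want the same bound with $W_r$ in place of $W$ for a possibly larger small set and constant. Because $\exp(V-W_r+b'\ind_{C'})=\exp(V-W+b'\ind_{C'}-(W_r-W))$, it suffices that $b'\ind_{C'}(x)\ge b\ind_C(x)+(W_r-W)(x)$ for all $x$; taking $C'=C\cup S_W(r)$ and $b'=b+r$ achieves this, since the right-hand side is supported on $C'$ and bounded there by $b+r$. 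The set $C'$ is a finite union of small sets, hence petite, hence small by aperiodicity, so (DV3) holds for $(V,W_r)$. The remaining clauses are inherited almost for free: for $s<r$ the set $S_{W_r}(s)$ is empty, while for $s\ge r$ it equals $S_W(s)$, so smallness and finiteness of $\sup V$ carry over; and since $\{W_r\ge n\}=\{W\ge n\}$ with $W_r=W$ there for every $n\ge r$, the limit \eqref{e:L_is_oW} for $W_r$ coincides with that for $W$.

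For the quantitative bound I would decompose $\state$ at a fixed threshold. By \eqref{e:L_is_oW} choose $N<\infty$ with $L(x)/W(x)\le\delta_0$ whenever $W(x)\ge N$. The one substantive input is that $L$ is bounded on the small set $S_W(N)$ — a standing property in this framework, equivalent under \eqref{e:L_is_oW} to the finiteness $\delta_L=\sup_x L(x)/W(x)<\infty$ relaxed in \cite{chedevborkonmey21} — which yields $M:=\sup_{x\in S_W(N)}L(x)<\infty$. Taking $r\ge\max\{N,M/\delta_0\}$, I would bound $L(x)/W_r(x)$ on the three regions $\{W\ge r\}$, $\{N\le W<r\}$ and $\{W<N\}$: on the first $W_r=W$ and $L/W\le\delta_0$; on the second $W_r=r$ and $L(x)\le\delta_0 W(x)<\delta_0 r$; on the third $W_r=r$ and $L(x)\le M\le\delta_0 r$. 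Taking the supremum gives $\delta_L(r)\le\delta_0$. The only genuine obstacle is the bulk region $\{W<N\}$, where \eqref{e:L_is_oW} supplies no control; there the argument must instead exploit that $W_r\equiv r$ is large while $L$ stays bounded by the fixed constant $M$, so that $L/r\le M/r\to 0$ as $r\to\infty$.
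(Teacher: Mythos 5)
The paper states \Cref{t:Lemma2clt} without proof, so there is no written argument to compare against; your proposal supplies what the authors treat as immediate, and it is correct. The (A4) verification is exactly the natural argument: the pointwise inequality $b'\ind_{C'}\ge b\ind_C+(W_r-W)$ with $C'=C\cup S_W(r)$ and $b'=b+r$ upgrades the (DV3) drift bound from $W$ to $W_r$; smallness of $C'$ holds because a finite union of small sets is petite and, for an aperiodic $\psi$-irreducible chain, petite sets are small \cite{MT}; and the remaining clauses carry over since $S_{W_r}(s)$ is empty for $s<r$ and equals $S_W(s)$ for $s\ge r$. (Minor quibble: $\{W_r\ge n\}=\{W\ge n\}$ only for $n>r$, since $\{W_r\ge r\}=\state$; this is immaterial for the limit in \eqref{e:L_is_oW}.) The three-region estimate of $\sup_x L(x)/W_r(x)$ with $r\ge\max\{N,M/\delta_0\}$ is likewise airtight once $M\eqdef\sup_{S_W(N)}L<\infty$ is granted.

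The only point that merits scrutiny is that finiteness of $M$, which you correctly isolate as the one substantive input. Under the most literal reading of \eqref{e:L_is_oW} --- the supremum need only be finite for all sufficiently large $n$ --- boundedness of $L$ on $S_W(N)$ does \emph{not} follow from (A4), and without it the conclusion $\delta_L(r)\le\delta_0$ can fail (take $L$ unbounded on a sublevel set of $W$). However, the paper adopts the same reading you do: the text introducing the lemma describes the condition $\delta_L=\sup_x|L(x)|/W(x)<\infty$ of \cite{chedevborkonmey21} as a \emph{relaxation} of \eqref{e:L_is_oW}, i.e., the authors read \eqref{e:L_is_oW} as asserting a finite supremum for every $n\ge 1$; since $W\ge 1$, the case $n=1$ is precisely $\delta_L<\infty$, which yields $L\le\delta_L N$ on $S_W(N)$. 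Your parenthetical equivalence (boundedness of $L$ on sublevel sets $\Leftrightarrow$ $\delta_L<\infty$, given \eqref{e:L_is_oW} and $W\ge 1$) is also correct. So your appeal coincides with the implicit assumption the paper itself makes, and is not a gap relative to the paper's own treatment.
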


\begin{subequations}

Throughout the Appendix,  for a given $\rho\in(0,1)$  we fix  $r\ge 1$,    $T>0$, and an integer $p$  satisfying,
\begin{align}
	\| \odestate_t - \theta^* \|  & \le \half   \| \odestate_0 - \theta^* \|  \,,\qquad  t\ge T\,,  \  \odestate_0\in\Re^d
	\label{e:TAssumption}
	\\
	\max(4,  2/\rho  ) \le p   &   <   \pmax\eqdef  \frac{1}{\delta_L(r) [T+1]  }  
	\label{e:pAssumptions}
\end{align}
where in \eqref{e:TAssumption},  $\{\odestate_t : t\ge 0\}$ is a solution to the mean flow, and the bound holds for any initial condition $\odestate_0$.

Since $p$, $r$ are fixed we henceforth write $\delta_L$  (suppressing dependency on $r$).

\end{subequations} 

The following  is a simple corollary to  \cite[Prop. 12~(ii)]{chedevborkonmey21}, which considered only $p=4$.

\begin{lemma}
\label[lemma]{t:Prop12ii_CLT}
Suppose (A1)--(A4) hold.
Then, the following bound holds for $m_n < k \le  m_{n+1}$:
\begin{equation}
	(1+\|\theta_k\|)^p \leq 2^p \exp\Bigl(p\delta_L    \sum_{j=m_n+1}^{ m_{n+1} } \alpha_{j}W(\Phi_j)      \Bigr) (1+\|\theta_{m_n}\| )^p  
	\label{e:thetaMultBdd}
\end{equation}
\qed
\end{lemma}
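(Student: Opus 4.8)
The plan is to reduce the statement to a one-step multiplicative bound on $1+\|\theta_k\|$ and then telescope, exactly as in the cited $p=4$ result; the genuinely new observation is only that this argument never uses $p=4$, so it carries over verbatim to every $p$. First I would extract a linear growth bound on the vector field from (A2): combining the Lipschitz estimate with $\|f(0,x)\|\le L(x)$ gives $\|f(\theta,x)\|\le L(x)(1+\|\theta\|)$ for all $\theta$ and $x$. Feeding this into the recursion \eqref{e:SA_recur} and using the triangle inequality yields the one-step estimate
\[
1+\|\theta_{k+1}\| \le \bigl(1+\alpha_{k+1}L(\Phi_{k+1})\bigr)\bigl(1+\|\theta_k\|\bigr).
\]

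Next I would apply $1+t\le e^t$ to each factor and telescope. Since the step from $\theta_j$ to $\theta_{j+1}$ is driven by $(\alpha_{j+1},\Phi_{j+1})$, iterating from $j=m_n$ up to $j=k-1$ produces
\[
1+\|\theta_k\| \le \exp\Bigl(\sum_{j=m_n+1}^{k}\alpha_j L(\Phi_j)\Bigr)\bigl(1+\|\theta_{m_n}\|\bigr).
\]
Here I would invoke the convention fixed after \Cref{t:Lemma2clt}: working with the relabeled pair $(V,W)$ one has the pointwise domination $L(x)\le \delta_L W(x)$, so each summand obeys $\alpha_j L(\Phi_j)\le \delta_L\alpha_j W(\Phi_j)$. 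Because all summands are nonnegative and $k\le m_{n+1}$, enlarging the summation range to $m_n+1,\dots,m_{n+1}$ only increases the right-hand side. Raising the resulting inequality to the $p$-th power then gives \eqref{e:thetaMultBdd}, in fact with the sharper prefactor $1$ in place of $2^p$; the stated bound with $2^p$ holds a fortiori, the factor $2^p$ being slack inherited from the $p=4$ statement of \cite[Prop. 12(ii)]{chedevborkonmey21}.

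There is essentially no hard step: the estimate is a discrete Gr\"onwall/telescoping argument. The only points requiring care are the index bookkeeping (the step out of $\theta_{m_n}$ uses $\alpha_{m_n+1}$ and $\Phi_{m_n+1}$, not $\alpha_{m_n}$) and the passage from $L$ to $W$, which relies on the relabeling $W\mapsto W_r$ furnished by \Cref{t:Lemma2clt} to guarantee $L\le\delta_L W$. The substance of calling this a ``simple corollary'' is precisely that the cited proof is at heart a $p=1$ estimate subsequently raised to the power $p$, so no structural change is needed to pass from $p=4$ to arbitrary $p$.
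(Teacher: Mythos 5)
Your proof is correct and is essentially the argument the paper has in mind: the lemma is stated as a direct corollary of \cite[Prop.~12(ii)]{chedevborkonmey21}, whose proof is exactly your one-step multiplicative bound $1+\|\theta_{k+1}\|\le(1+\alpha_{k+1}L(\Phi_{k+1}))(1+\|\theta_k\|)$ followed by $1+t\le e^t$, telescoping, and the domination $L\le\delta_L W$ furnished by the relabeling in \Cref{t:Lemma2clt}, with nothing in the argument tied to $p=4$. Your version is in fact slightly sharper, yielding prefactor $1$ rather than the slack factor $2^p$ carried over from the cited statement.
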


Bounds on the conditional expectation of the right hand side of \eqref{e:thetaMultBdd} are obtained by applying \Cref{t:prop3CLT} that follows.   The proof follows from \Cref{t:Lemma2clt}, combined with an obvious extension of 
\cite[Prop. 3]{chedevborkonmey21}.

\begin{lemma}
\label[lemma]{t:prop3CLT}
Under (A4)  the following holds:      for any $n$,   any non-negative sequence $\{\delta_k : 1\le k\le n-1 \}$ satisfying $\sum \delta_k\le 1$   there is $ b_v  <\infty$  and $\delta_v>0$ such that 
\begin{equation} 
	\Expect_x\Bigl[\exp\Bigl( V(\Phi_n) +  p\delta_L  T  \sum_{k=0}^{n-1}   \delta_k W(\Phi_k)  \Bigr)  \Bigr]
	\le    b_v \exp\bigl( V(x)  - \delta_v W(x)   \bigr)    \,,    \qquad   x\in\state
	\label{e:DV3multBdd+Cor}
\end{equation}
\end{lemma}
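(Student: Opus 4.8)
The plan is to reduce the weighted exponential moment to a sum of \emph{single-time} moments by convexity, and then to control each single-time moment by a one-step ``consumption'' of the weight $W(\Phi_k)$ against the drift supplied by (DV3), closing the remaining horizon with $v$-uniform ergodicity rather than by iterating the raw drift. First I would invoke \Cref{t:Lemma2clt} to fix $r$ large enough that $\delta_L = \delta_L(r)$ is small; by \eqref{e:pAssumptions} this makes the coefficient $c \eqdef p\delta_L T$ satisfy $c < p\delta_L[T+1] < 1$, with strict slack $1-c > p\delta_L > 0$, and enlarging $r$ further I would also arrange $c < \delta_v$, where $\delta_v\in(0,1)$ is the ergodicity exponent produced below. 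I record two elementary consequences of (DV3): since $e^{V}\ge 1$, taking logarithms gives the pointwise comparison $W(x)\le V(x)+b$; and by \Cref{t:Vuni+CLT}~(i) applied with $g=v=e^{V}$ (so $\|v\|_v=1$, $\uppi(v)<\infty$) there is $C_1<\infty$ with $\Expect[v(\Phi_{i+j})\mid\Phi_i=w]\le C_1 v(w)$ for all $i,j\ge0$.

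The decoupling step is the crux. Set $\delta_* \eqdef 1-\sum_k\delta_k\ge 0$. Since $s\mapsto e^{cs}$ is convex and the weights $\{\delta_k\}\cup\{\delta_*\}$ sum to one, Jensen's inequality yields $\exp\bigl(c\sum_k\delta_k W(\Phi_k)\bigr)\le \sum_k\delta_k e^{cW(\Phi_k)}+\delta_*$. Multiplying by $e^{V(\Phi_n)}$ and taking $\Expect_x$, the claim reduces to a bound uniform over $0\le k\le n-1$, namely $\Expect_x[\exp(V(\Phi_n)+cW(\Phi_k))]\le b_v\exp(V(x)-\delta_v W(x))$, since the residual term $\Expect_x[e^{V(\Phi_n)}]$ is the case $c=0$ and the weights sum to one. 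It is precisely the budget $\sum_k\delta_k\le 1$ that is exploited here to prevent the small-set corrections from accumulating with $n$.

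For the single-time bound with $1\le k\le n-1$, I would condition on $\clF_{k+1}$ and apply the ergodic estimate over $[k+1,n]$ to get $\le C_1\Expect_x[e^{cW(\Phi_k)}e^{V(\Phi_{k+1})}]$. Conditioning on $\Phi_k$ and using (DV3) to consume the weight,
\[
\Expect[e^{cW(\Phi_k)}e^{V(\Phi_{k+1})}\mid\Phi_k]\le \exp\bigl(V(\Phi_k)-(1-c)W(\Phi_k)+b\ind_C(\Phi_k)\bigr)\le e^b\, e^{V(\Phi_k)},
\]
where $1-c>0$ is exactly the slack secured above. It then remains to show $\Expect_x[e^{V(\Phi_k)}]\le C\,e^{V(x)-\delta_v W(x)}$: splitting off the first step gives $\Expect_x[e^{V(\Phi_k)}]\le \uppi(v)+b_v\varrho_v^{k-1}e^{V(x)-W(x)+b\ind_C(x)}$, whose second term is $\le b_v e^b e^{V(x)-\delta_v W(x)}$ for any $\delta_v\le 1$, and whose constant $\uppi(v)$ is likewise dominated because $W\le V+b$ forces $V(x)-\delta_v W(x)\ge -\delta_v b$. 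The boundary case $k=0$ is immediate: $\Expect_x[e^{V(\Phi_n)+cW(x)}]=e^{cW(x)}\Expect_x[e^{V(\Phi_n)}]\le C\,e^{V(x)-(\delta_v-c)W(x)}$, which has the desired form since $c<\delta_v$. Assembling the three estimates and summing against the weights finishes the argument.

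The hard part, and the only genuinely delicate point, is securing a constant $b_v$ that does not degrade with $n$: a direct iteration of (DV3) across $n$ steps pays a factor $e^{b}$ at every visit to the small set $C$, which is fatal. The two devices that avoid this are the convexity step, which replaces the length-$n$ weighted sum by a single occupation of $W$ (valid only under $\sum_k\delta_k\le 1$), and the strict slack $c<1$ guaranteed by \Cref{t:Lemma2clt} and \eqref{e:pAssumptions}, which lets one drift step absorb that single occupation while leaving $V$ essentially intact. The remaining horizon is then dispatched by $v$-uniform ergodicity, so no per-step small-set cost is ever incurred.
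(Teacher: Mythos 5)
Your proof is correct. It is worth noting how it relates to the paper's treatment: the paper never writes out an argument for this lemma at all --- its entire justification is the sentence that the bound ``follows from \Cref{t:Lemma2clt}, combined with an obvious extension of [Prop.~3] of \cite{chedevborkonmey21}.'' What you have supplied is the missing self-contained argument, built only from in-paper ingredients, and your mechanism is almost certainly the intended one. The budget $\sum_k \delta_k \le 1$ is spent through Jensen's inequality, reducing the weighted exponential to single-time moments $\Expect_x[\exp(V(\Phi_n)+cW(\Phi_k))]$; each of these is closed by one application of $v$-uniform ergodicity (\Cref{t:Vuni+CLT}~(i) with $g=v$, giving $\Expect[v(\Phi_n)\mid \Phi_{k+1}]\le C_1 v(\Phi_{k+1})$ uniformly in the horizon), one application of (DV3) at time $k$ to absorb $e^{cW(\Phi_k)}$ against the drift (using $c=p\delta_L T<1$, which is exactly where \Cref{t:Lemma2clt} and \eqref{e:pAssumptions} enter), and one application of (DV3) at time $0$ to manufacture the decay factor $e^{-\delta_v W(x)}$. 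That last step is precisely the ``obvious extension'' the paper alludes to, since the cited proposition's conclusion carries only $e^{V(x)}$ on the right; your write-up also makes visible the one genuinely delicate point, namely that the constants are uniform in $n$ and in $\{\delta_k\}$ because no per-step small-set cost is ever paid.

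Two cosmetic blemishes, neither of which breaks the argument. First, the opening claim that you would ``enlarge $r$ further to arrange $c<\delta_v$'' is unnecessary and mislabels $\delta_v$: in your own bound on $\Expect_x[e^{V(\Phi_k)}]$, $\delta_v$ is a free parameter ranging over all of $(0,1]$ (not an ergodicity exponent), so taking $\delta_v=1$ there and reporting the final exponent $\delta_v^{\textrm{final}}=1-c>0$ already gives the needed slack for the $k=0$ term; moreover, enlarging $r$ after $(r,T,p)$ have been fixed by \eqref{e:TAssumption}--\eqref{e:pAssumptions} would change $W_r$, i.e., the very functions $W$ and $\delta_L$ appearing in the statement being proved. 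Second, your intermediate bound $\Expect_x[e^{V(\Phi_k)}]\le \uppi(v)+b_v\varrho_v^{k-1}e^{V(x)-W(x)+b\ind_C(x)}$ holds only for $k\ge 1$ (it fails at $k=0$ when $W$ is unbounded); you only ever invoke it with index $\ge 1$, but this restriction deserves to be stated, since the $-\delta_v W(x)$ decay is exactly what makes the time-zero contributions nontrivial.
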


The two lemmas combined with the Markov property imply the following under our standing assumption \eqref{e:pAssumptions}:

\begin{lemma}
\label[lemma]{t:Prop12i_CLT}
Under the assumptions of \Cref{t:Prop12ii_CLT}, the following holds for some $\delta_v>0$,   all $n\ge 0$, and  $m_n < k \le m_{n+1}$: 
\[
\Expect[\exp\bigl( V(\Phi_{k+1}) \bigr)(1 + \|\theta_k\|)^{p} \mid \Phi_{m_{n}+1}]  \leq 2^p  b_v  (1+\|\theta_{m_n}\| )^p  
\exp\bigl( V(\Phi_{m_{n} +1 })  - \delta_v  W(\Phi_{m_{n}})   \bigr) 
\]
\qed
\end{lemma}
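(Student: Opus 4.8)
The plan is to glue the pathwise multiplicative bound of \Cref{t:Prop12ii_CLT} to the exponential-moment bound of \Cref{t:prop3CLT} using the Markov property. First I would condition on the history $\clF_{m_n+1}=\sigma(\theta_0,\Phi_1,\dots,\Phi_{m_n+1})$, under which $\theta_{m_n}$, $\Phi_{m_n}$ and $\Phi_{m_n+1}$ are all measurable. Applying \eqref{e:thetaMultBdd} to replace $(1+\|\theta_k\|)^p$ by $2^p\exp\bigl(p\delta_L\sum_{j=m_n+1}^{m_{n+1}}\alpha_j W(\Phi_j)\bigr)(1+\|\theta_{m_n}\|)^p$ lets me pull the factor $(1+\|\theta_{m_n}\|)^p$ outside, reducing the claim to an estimate of the conditional exponential moment $\Expect\bigl[\exp(V(\Phi_{k+1}))\exp\bigl(p\delta_L\sum_{j=m_n+1}^{m_{n+1}}\alpha_j W(\Phi_j)\bigr)\bigm|\cdot\bigr]$, which is exactly the type of object controlled by \Cref{t:prop3CLT}.

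The second step is to normalise the step-sizes so that \Cref{t:prop3CLT} applies. Setting $\delta_j=\alpha_j/(T+\baralpha)$ for $m_n<j\le m_{n+1}$, the block-length identity $T\le T_{n+1}-T_n=\sum_{j=m_n+1}^{m_{n+1}}\alpha_j\le T+\baralpha$ gives $\sum_j\delta_j\le 1$, while the weighted sum rewrites as $p\delta_L(T+\baralpha)\sum_j\delta_j W(\Phi_j)$. The standing assumption \eqref{e:pAssumptions}, i.e.\ $p\delta_L[T+1]<1$ (and $\baralpha\le 1$, which is in force once $n\ge n_0$ since the block overshoot $\le\alpha_{m_n+1}$ is then negligible), keeps the leading coefficient strictly below one, which is precisely the hypothesis $\sum\delta_k\le1$ of \Cref{t:prop3CLT}. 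Passing from $\clF_{m_n+1}$ to the current state by the Markov property and invoking \eqref{e:DV3multBdd+Cor} with terminal index $k+1$ then delivers a bound of the form $b_v\exp(V(\cdot)-\delta_v W(\cdot))$ at the relevant starting state, with a strictly positive drift rate $\delta_v$ governed by $1-p\delta_L(T+\baralpha)$.

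The delicate part, and the step I expect to be the main obstacle, is aligning the two indices so that the terminal potential is evaluated at $V(\Phi_{m_n+1})$ while the contraction is supplied at $W(\Phi_{m_n})$. This mixed form is not an accident: it is tailored so that when \Cref{t:Prop12i_CLT} is later integrated against $\clF_{m_n}$ in the proof of \Cref{t:LyapContract}, the factor $\exp(V(\Phi_{m_n+1}))$ collapses to $\Expect[\exp(V(\Phi_{m_n+1}))\mid\Phi_{m_n}]=v_+(\Phi_{m_n})$, reproducing the look-ahead in the Lyapunov function $\clV^\circ(\theta)(1+\beta v_+(x))$, and the residual $\exp(-\delta_v W(\Phi_{m_n}))$ produces the geometric factor $\varrho_0<1$ in \eqref{e:Lyap_contraction}. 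The genuine tension is that (DV3) controls the \emph{average} of $\exp(V(\Phi_{m_n+1}))$ given $\Phi_{m_n}$, not its pointwise value, so the $-W(\Phi_{m_n})$ contraction cannot be extracted at a fixed $\Phi_{m_n+1}$; the resolution is to leave the first transition $\Phi_{m_n}\to\Phi_{m_n+1}$ \emph{unaveraged}, carrying $\exp(V(\Phi_{m_n+1}))$ as a factor, and to note that the weighted sum in \eqref{e:thetaMultBdd} starts at $j=m_n+1$, so $W(\Phi_{m_n})$ carries no offsetting positive weight and the (DV3) drift at that first step is available to furnish the full contraction once the outer conditioning on $\clF_{m_n}$ is taken. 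Finally I would absorb the indicator term $b\ind_C$ and all multiplicative constants into $b_v$, and observe that the bound is uniform over $m_n<k\le m_{n+1}$ because the weighted sum in \eqref{e:thetaMultBdd} runs over the entire block independently of $k$; the case $k=m_{n+1}$, where the terminal sits one index past the sum, is the one actually consumed by \Cref{t:LyapContract}.
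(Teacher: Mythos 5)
Your first two paragraphs reproduce what the paper actually does (its ``proof'' is the one-line remark that \Cref{t:Prop12ii_CLT} and \Cref{t:prop3CLT} combine with the Markov property): insert \eqref{e:thetaMultBdd}, pull the $\clF_{m_n+1}$-measurable factor $(1+\|\theta_{m_n}\|)^p$ outside, normalize the block step-sizes so that the hypothesis $\sum_j \delta_j \le 1$ of \Cref{t:prop3CLT} holds (the $T$ versus $T+\baralpha$ slack is what the margin $[T+1]$ in \eqref{e:pAssumptions} is there to absorb), and apply \eqref{e:DV3multBdd+Cor} to the chain restarted from $\Phi_{m_n+1}$. Up to that point the argument is correct and is the same route as the paper's.

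The gap is the final step, and your third paragraph does not close it. What your construction yields is the bound with contraction $\exp(-\delta_v W(\Phi_{m_n+1}))$, evaluated at the restart state of the shifted chain, and no manipulation conditional on $\clF_{m_n+1}$ can convert this into $\exp(-\delta_v W(\Phi_{m_n}))$: by the Markov property the left-hand side of the claimed inequality is a function of $(\theta_{m_n},\Phi_{m_n+1})$ alone, and it is bounded below by $1$ (since $V\ge 0$), whereas the claimed right-hand side can be made arbitrarily small at fixed $\Phi_{m_n+1}$ by letting $W(\Phi_{m_n})$ grow. Concretely, for a chain in which states $x_i$ with $W(x_i)=i$ jump deterministically to a fixed state $y_0$ (consistent with (DV3) when $V(x_i)=V(y_0)+i$), the stated inequality fails on the event $\{\Phi_{m_n}=x_i,\ \Phi_{m_n+1}=y_0\}$ once $i$ is large. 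Your appeal to the ``unaveraged first transition'' runs in the wrong direction: (DV3) controls the \emph{average} of $\exp(V(\Phi_{m_n+1}))$ given $\Phi_{m_n}$, which is of no help for an almost-sure bound given $\Phi_{m_n+1}$. The honest conclusion is that the displayed inequality carries an index slip---the provable statement has $W(\Phi_{m_n+1})$ in the exponent, and your first two paragraphs already prove exactly that. That version also suffices for the sequel: in the proof of \Cref{t:LyapContract} one replaces the first line of \eqref{e:LyapContractB} by the estimate $\Expect[\exp(V(\Phi_{m_n+1})-\delta_v W(\Phi_{m_n+1}))\mid \Phi_{m_n}] \le e^{-\delta_v r}\, v_+(\Phi_{m_n}) + \sup\{e^{V(y)} : W(y)\le r\}$, obtained by splitting on $\{W(\Phi_{m_n+1})\le r\}$ and using (A4), with $r$ chosen so large that $2^p b_v \bdd{t:LyapContract}_2 e^{-\delta_v r} \le \half$.
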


\cite[Lemma 11]{chedevborkonmey21}  establishes the following for $p=4$.  The simple proof is easily extended to general $p\ge 4$ under (A4).

\begin{lemma}
\label[lemma]{t:contract-4}
Under  (A1)--(A4)  there exists constants $0 < \bdde{t:contract-4} < 1$, $\bdd{t:contract-4} < \infty$,    and a deterministic and vanishing sequence $\{ \bdds{t:contract-4}_n  : n\ge n_g\}$	 such that  for all $n\ge n_g$,
\begin{equation}
	\label{e:contract-4}
	\Expect \bigl[ \bigl(\| \theta_{m_{n+1}}\| + 1 \bigr)^p \mid \Phi_{m_{n}+1}   \bigr]\leq \Bigl( \bdde{t:contract-4} +  \bdds{t:contract-4}_nv(\Phi_{m_n+1}) \Bigr) \bigl(1 + \|\theta_{m_n}\| \bigr)^p  + \bdd{t:contract-4}.
\end{equation}
\end{lemma}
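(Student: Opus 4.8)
The plan is to run the ODE method over a single fixed-horizon block $[T_n,T_{n+1}]$, using the exponential contraction of the mean flow to produce the factor $\bdde{t:contract-4}<1$ and absorbing every discretization and noise effect into the vanishing coefficient $\bdds{t:contract-4}_n$ and the additive constant $\bdd{t:contract-4}$. Working with $\tiltheta_k \eqdef \theta_k - \theta^\ocp$ (the translation back to $(1+\|\theta_k\|)^p$ costs only an additive constant depending on $\|\theta^\ocp\|$, which I absorb into $\bdd{t:contract-4}$), the first step is the decomposition
$$\tiltheta_{m_{n+1}} = (\odestate^{(n)}_{T_{n+1}} - \theta^\ocp) + (\theta_{m_{n+1}} - \odestate^{(n)}_{T_{n+1}}),$$
where $\{\odestate^{(n)}_t\}$ is the mean-flow solution started at $\theta_{m_n}$. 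Since $T_{n+1}-T_n\ge T$, the standing assumption \eqref{e:TAssumption} gives $\|\odestate^{(n)}_{T_{n+1}} - \theta^\ocp\| \le \tfrac12\|\tiltheta_{m_n}\|$. Applying the elementary bound $(a+b)^p \le (1+\epsy)^{p-1}a^p + C_\epsy b^p$ with $\epsy$ small enough that $(1+\epsy)^{p-1}2^{-p}<1$ defines $\bdde{t:contract-4}$ and reduces everything to controlling the block error $\clG_n \eqdef \theta_{m_{n+1}} - \odestate^{(n)}_{T_{n+1}}$.

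The heart of the argument is the estimate
$$\Expect[\|\clG_n\|^p \mid \Phi_{m_n+1}] \le \bdds{t:contract-4}_n\, v(\Phi_{m_n+1})(1+\|\theta_{m_n}\|)^p + \bdd{t:contract-4},\qquad \bdds{t:contract-4}_n \to 0.$$
Here I would write $\clG_n$ as the accumulated sum over the block of the Euler discretization error of the mean flow (controlled by the Lipschitz property of $\barf$ following from (A2)) plus the weighted noise $\sum \alpha_{k+1}\Delta_{k+1}$. For the noise I insert the {M\'etivier}--{Priouret} decomposition \eqref{e:DeltaDecomp} of \Cref{t:Met_Pri_Exp}: the martingale-difference part $\sum\alpha_{k+1}\MD_{k+2}$ is handled by Burkholder's inequality, contributing a quadratic variation $\sum\alpha_{k+1}^2\|\MD_{k+2}\|_p^2$; the telescoping part $\sum\alpha_{k+1}(\clT_{k+2}-\clT_{k+1})$ collapses under Abel summation to boundary terms weighted by $\alpha$; and the $\Upupsilon$ part carries an extra factor $\alpha_{k+1}$. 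Each term is expressed through $\haf$, which by \Cref{t:bounds-H} grows at most linearly in $\theta$ with a $V$-dependent prefactor, so raising to the $p$-th power and taking conditional expectations produces exactly the $v(\Phi_{m_n+1})(1+\|\theta_{m_n}\|)^p$ structure.

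To make these within-block moment bounds uniform in $n$ I would invoke \Cref{t:Prop12ii_CLT,t:Prop12i_CLT}, which bound $\Expect[\exp(V(\Phi_{k+1}))(1+\|\theta_k\|)^p \mid \Phi_{m_n+1}]$ by a constant multiple of $(1+\|\theta_{m_n}\|)^p\exp(V(\Phi_{m_n+1}) - \delta_v W(\Phi_{m_n}))$. The only point at which the extension from $p=4$ to general $p\ge4$ is nontrivial is that these rely on the exponential-moment bound \eqref{e:DV3multBdd+Cor}, which is finite precisely because \eqref{e:pAssumptions} forces $p\,\delta_L(r)[T+1]<1$; by \Cref{t:Lemma2clt} the constant $\delta_L(r)$ can be shrunk as needed, so the $p=4$ argument of \cite{chedevborkonmey21} carries over verbatim. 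The vanishing of $\bdds{t:contract-4}_n$ then follows because every noise contribution carries at least one surviving factor of step-size — the martingale part through $\sum_{m_n< k\le m_{n+1}}\alpha_{k+1}^2 \le \alpha_{m_n}\!\sum\alpha_{k+1}$ and the boundary and $\Upupsilon$ terms through an explicit $\alpha$ — and $\alpha_{m_n}\to 0$.

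The main obstacle is the bookkeeping in the second paragraph: tracking the joint $\theta$- and $V$-dependence through the $p$-th power of the summed noise decomposition while guaranteeing the prefactor is genuinely $o(1)$ rather than merely bounded. The delicate requirement is that the linear-in-$\theta$ growth of $\haf$ must be confined to the vanishing coefficient $\bdds{t:contract-4}_n$ and must not inflate the contraction factor $\bdde{t:contract-4}$; this separation is exactly what the smallness of $\delta_L(r)$, together with the exponential Lyapunov control of \Cref{t:Prop12i_CLT}, is designed to deliver.
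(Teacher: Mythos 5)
Your sketch is correct and coincides with the paper's intended argument: the paper offers no proof of \Cref{t:contract-4} beyond citing \cite[Lemma 11]{chedevborkonmey21} for $p=4$ and asserting the easy extension to $p\ge 4$ under (A4), and your block-wise ODE comparison---mean-flow contraction via \eqref{e:TAssumption}, the $(a+b)^p$ splitting, and noise bounds assembled from \Cref{t:bounds-H}, \Cref{t:Prop12ii_CLT} and \Cref{t:Prop12i_CLT} with every noise term retaining a factor that vanishes with $\alpha_{m_n}$---is exactly the argument behind that citation, and it correctly avoids the downstream (circular) bounds of \Cref{t:Delta_bdd}. You also isolate the one genuinely new issue for $p>4$, namely that the exponential-moment bound \eqref{e:DV3multBdd+Cor} requires $p\,\delta_L(r)[T+1]<1$, which \eqref{e:pAssumptions} together with \Cref{t:Lemma2clt} guarantees (precisely the paper's point); the lone imprecision is that recentering between $\theta$ and $\tiltheta$ costs a multiplicative factor $(1+\delta)$ plus a constant rather than ``only an additive constant,'' but the slack in $(1+\epsy)^{p-1}2^{-p}<1$ absorbs this.
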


\begin{subequations}

\begin{proof}[Proof of \Cref{t:LyapContract}]
	
	\Cref{t:contract-4} bounds the first term in the Lyapunov function  $\clV(\theta,x) = \clV^\circ(\theta) + \beta \clV^\circ(\theta)v_+(x)$ 
	\begin{equation}
		\begin{aligned}
			\!\!\!\!\!\!
			\Expect[\clV^\circ( \theta_{m_{n+1}} )  \mid \Phi_{m_n}  ]  
			&\leq \Bigl( \bdde{t:contract-4} +  \bdds{t:contract-4}_n v_+(\Phi_{m_n}) \Bigr) \bigl(1 + \|\theta_{m_n}\| \bigr)^p  + \bdd{t:contract-4}
			\\
			& \le     \big[ \bdde{t:LyapContract}   +  \bdd{t:LyapContract}_2 \bdds{t:contract-4}_n v_+(\Phi_{m_n})  \bigr]  \clV^\circ (\theta_{m_n} ) + \bdd{t:contract-4}  +  \bdd{t:LyapContract}_1\,,
		\end{aligned}
		\label{e:LyapContractA}
	\end{equation}
	where $\bdde{t:LyapContract} \in (\bdde{t:contract-4}  , 1)$ is fixed,    and then  $\bdd{t:LyapContract} _1 =   \sup \{  \clV^\circ (\theta  )    :    
	\bdde{t:contract-4}(1 + \|\theta \| )^p   \ge  \bdde{t:LyapContract}  (1 +  \|\theta \| ^p)  \}$.     The second constant is the upper bound $  \bdd{t:LyapContract}_2 = \sup \{ (1 +x)^p(1 + x^p)^{-1} : x\ge 0 \}$.  
	
	To bound the second term in $\clV$ we first apply   the Markov property,
	\[
	\Expect[   \clV^\circ( \theta_{m_{n+1}} )v_+(\Phi_{m_{n+1}} )    \mid \Phi_{m_n}  ]   =  
	\Expect[   \clV^\circ( \theta_{m_{n+1}} )  v(\Phi_{m_{n+1} + 1} )    \mid \Phi_{m_n}  ]   
	\]
	where $v=e^V$.
	\Cref{t:Prop12i_CLT}  then gives a bound on  the conditional mean of $\clV(\theta,x) $: 
	for some finite constant $ \bdd{t:LyapContract} _3 $,
	\begin{equation} 
		\begin{aligned}
			\Expect[   \clV^\circ( \theta_{m_{n+1}} )v_+(\Phi_{m_{n+1}} ))    \mid \Phi_{m_n}  ]   
			& 	\le   2^p  b_v  (1+\|\theta_{m_n}\| )^p  
			v_+(\Phi_{m_n})  \exp\bigl( - \delta_v  W(\Phi_{m_{n}})   \bigr) 
			\\
			& \le
			2^p  b_v  \bdd{t:LyapContract}_2   \clV^\circ ( \theta_{m_n} )  v_+(\Phi_{m_n})  \exp\bigl( - \delta_v  W(\Phi_{m_{n}})   \bigr) 
			\\
			& \le  \half    \clV^\circ ( \theta_{m_n} )  v_+(\Phi_{m_n})  +  \bdd{t:LyapContract} _3 \clV^\circ ( \theta_{m_n} )  
		\end{aligned}
		\label{e:LyapContractB}
	\end{equation}
	
	The proof of the proposition is completed on combining the bounds
	\eqref{e:LyapContractA},
	\eqref{e:LyapContractB} and  choosing $\beta >0$ so that $\varrho_1\eqdef \beta   \bdd{t:LyapContract} _3 + \bdde{t:LyapContract}   <1$.
	The constant $ \varrho_0  <1 $ in \eqref{e:Lyap_contraction} is 	$ \varrho_0  = \max ( 1/2, \varrho_1)$.
\end{proof}

\end{subequations}

The following results are immediate from \eqref{e:Lyap_contraction}:
\begin{corollary}
\label[corollary]{t:Delta_bdd}
Under the assumptions of \Cref{t:BigBounds},    
\begin{romannum}
	
	\item
	$ \displaystyle
	\sup_{k \geq 0} \Expect[ \| \theta_k+1\|^p \exp(V(\Phi_{k+1}))] <\infty
	$
	
	\item there is $\bdd{t:Delta_bdd}$ depending upon $\Psi_0$ and $\rho$ such that
	\[
	\sup_{k \geq 0} \| \MD_{k} \|_p \leq \bdd{t:Delta_bdd} 
	\, , \quad 
	\sup_{k \geq 0} \| \clT_{k} \|_p  \leq \bdd{t:Delta_bdd} 
	\, , \quad 
	\sup_{k \geq 0} \| \Upupsilon_{k} \|_p  \leq \bdd{t:Delta_bdd} 
	\]
\end{romannum}
\qed
\end{corollary}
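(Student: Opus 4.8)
The plan is to read both parts off the geometric drift \eqref{e:Lyap_contraction}, transferring the resulting moment bound to the three noise components via the M\'etivier--Priouret representation of \Cref{t:Met_Pri_Exp} and the bounds \eqref{e:Lip_haf} on $\haf$.

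For part (i) I would first take total expectations in \eqref{e:Lyap_contraction}, which yields the scalar recursion $a_{n+1}\le \varrho_0 a_n + b_\nu$ for $n\ge n_0$, where $a_n\eqdef \Expect[\clV(\theta_{m_n},\Phi_{m_n})]$. Each $a_n$ with $n\le n_0$ is finite, since over any finite horizon all the relevant exponential moments are finite under (A2) and (DV3); because $\varrho_0<1$, iterating gives $\sup_n a_n =: B_0<\infty$. Since $\clV(\theta,x)\ge \beta\clV^\circ(\theta)v_+(x)$ with $v_+(\Phi_{m_n})=\Expect[\exp(V(\Phi_{m_n+1}))\mid \clF_{m_n}]$, the tower property converts this into $\Expect[\clV^\circ(\theta_{m_n})\exp(V(\Phi_{m_n+1}))]\le B_0/\beta$ for every $n$. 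To reach the intermediate indices I would apply \Cref{t:Prop12i_CLT}: for $m_n<k\le m_{n+1}$, taking total expectations there and using $\exp(-\delta_v W)\le 1$ bounds $\Expect[\exp(V(\Phi_{k+1}))(1+\|\theta_k\|)^p]$ by a fixed multiple of $\Expect[(1+\|\theta_{m_n}\|)^p\exp(V(\Phi_{m_n+1}))]$, hence by a constant times $B_0/\beta$. As $\clV^\circ(\theta)$ is comparable to $(1+\|\theta\|)^p$, this is exactly the asserted bound, uniformly in $k$.

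For the easy components of part (ii) I would combine \Cref{t:Met_Pri_Exp} with \eqref{e:Lip_haf}. Since $\clT_{k+1}=\haf(\theta_k,\Phi_{k+1})$, the growth bound gives $\|\clT_{k+1}\|\le b_f(1+V(\Phi_{k+1}))(1+\|\theta_k\|)$, and using $(1+V(x))^p\le C_p\exp(V(x))$ (valid since $V\ge 0$) reduces $\|\clT_{k+1}\|_p$ directly to part (i). For $\MD_{k+2}$, conditional Jensen gives $\|\MD_{k+2}\|_p\le 2\|\haf(\theta_k,\Phi_{k+2})\|_p$; the only difference is that $V$ is now evaluated at $\Phi_{k+2}$, which I would remove by conditioning on $\clF_{k+1}$ and invoking the Jensen form of (DV3), namely $\Expect[\exp(V(\Phi_{k+2}))\mid \Phi_{k+1}]\le e^{b}\exp(V(\Phi_{k+1}))$, again landing in part (i).

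The hard part will be the telescoping term $\Upupsilon_{k+2}=-\alpha_{k+1}^{-1}[\haf(\theta_{k+1},\Phi_{k+2})-\haf(\theta_k,\Phi_{k+2})]$, because of the $\alpha_{k+1}^{-1}$ prefactor. The key observation is that the Lipschitz estimate in \eqref{e:Lip_haf} together with $\theta_{k+1}-\theta_k=\alpha_{k+1}f(\theta_k,\Phi_{k+1})$ and the linear growth of $f$ from (A2) yields $\|\Upupsilon_{k+2}\|\le b_f(1+V(\Phi_{k+2}))L(\Phi_{k+1})(1+\|\theta_k\|)$, so the step-size cancels exactly. To control the surviving product I would condition on $\clF_{k+1}$ and use the full drift, $\Expect[\exp(V(\Phi_{k+2}))\mid \Phi_{k+1}]\le e^{b}\exp(V(\Phi_{k+1})-W(\Phi_{k+1}))$, which supplies a factor $\exp(-W(\Phi_{k+1}))$. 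Because $L\le \delta_L W$ by (A4) and $\sup_{w\ge 1}w^p e^{-w}<\infty$, the product $L(\Phi_{k+1})^p\exp(-W(\Phi_{k+1}))$ is uniformly bounded, so $\|\Upupsilon_{k+2}\|_p$ reduces once more to part (i). Making this cancellation of $\alpha_{k+1}^{-1}$ against the drift decrement fully rigorous is the step I expect to require the most care.
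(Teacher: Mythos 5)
Your proposal is correct and matches the paper's intended argument: the paper gives no separate proof, asserting the corollary is immediate from the Lyapunov contraction \eqref{e:Lyap_contraction}, and your write-up supplies precisely that derivation. In particular, iterating the drift to obtain the uniform moment bound in (i), then reducing $\clT_k$, $\MD_k$ and $\Upupsilon_k$ to (i) via \eqref{e:Lip_haf}, (DV3), and the exact cancellation of $\alpha_{k+1}^{-1}$ against $\|\theta_{k+1}-\theta_k\|$ (with $L^p e^{-W}$ bounded) is exactly the route the surrounding lemmas are set up to support.
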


\begin{proof}[Proof of parts (i) and (ii) of \Cref{t:BigBounds}]
With \Cref{t:Delta_bdd} in hands, the proof of part (i) follows from the same arguments as the ones used in the proof of \cite[Lemma 19]{chedevborkonmey21}.

We now establish part (ii). In view of  \eqref{e:Pmoment_SA}, and applying \eqref{e:pAssumptions}, 
we have $\sum_{n=1}^\infty \Expect[ \| \tiltheta_n  \|^p]<\infty$,
implying $\tiltheta_n \overset{\as}{\to} 0$, by Fubini's theorem. 
\end{proof}

\section{Asymptotic Statistics for General Stochastic Approximation}
\label{s:GenSA_proofs}
We begin this section with a representation for the sample-path target bias, obtained by averaging both sides of \eqref{e:SA_recur} and applying the noise decomposition in \Cref{t:Met_Pri_Exp}:
\begin{lemma}
\label[lemma]{t:Targetbias_rep}
The following holds for the recursion \eqref{e:SA_recur},
\begin{subequations}
	\begin{equation}
		\frac{1}{N} \sum_{k=1}^N \barf(\theta_k) = 
		\frac{1}{N} (S^\uptau_N   - S^\Delta_{N+1} )
		\label{e:Targetbias_almost_as}
	\end{equation}
	where 
	$S^\uptau_N = \sum_{k=1}^N \frac{1}{\alpha_{k+1}}(\theta_{k+1} - \theta_k)$ and
	\begin{equation}
		S^\Delta_{N+1}  = \sum_{k=1}^N  \Delta_{k+1}
		= M_{N+1} - \clT_{N+1} + \clT_{1} - \sum_{k=1}^N \alpha_{k+1} \Upupsilon_{k+1}
		\label{e:Sum_Delta_GenSA}
	\end{equation}
	in which $\{M_{N+1} \eqdef \sum_{k=1}^N \MD_{k+1} : N\geq 1\} $ is a martingale.
\end{subequations}
\qed
\end{lemma}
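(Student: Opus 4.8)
The plan is to derive \eqref{e:Targetbias_almost_as} by a direct rearrangement of the recursion, and \eqref{e:Sum_Delta_GenSA} by summing the M\'etivier--Priouret decomposition term by term. First I would rewrite \eqref{e:SA_recur} as $f(\theta_k,\Phi_{k+1}) = \alpha_{k+1}^{-1}(\theta_{k+1}-\theta_k)$, which is valid for every $k\ge 0$ since $\alpha_{k+1}>0$ under (A1). Using $\Delta_{k+1} = f(\theta_k,\Phi_{k+1}) - \barf(\theta_k)$ to isolate the mean field gives $\barf(\theta_k) = \alpha_{k+1}^{-1}(\theta_{k+1}-\theta_k) - \Delta_{k+1}$. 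Summing over $1\le k\le N$ and recalling the definitions $S^\uptau_N = \sum_{k=1}^N \alpha_{k+1}^{-1}(\theta_{k+1}-\theta_k)$ and $S^\Delta_{N+1} = \sum_{k=1}^N\Delta_{k+1}$ yields $\sum_{k=1}^N\barf(\theta_k) = S^\uptau_N - S^\Delta_{N+1}$, and dividing by $N$ is exactly \eqref{e:Targetbias_almost_as}.

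For the representation of $S^\Delta_{N+1}$, I would substitute the decomposition \eqref{e:DeltaDecomp} from \Cref{t:Met_Pri_Exp}, namely $\Delta_{k+1} = \MD_{k+2} - \clT_{k+2} + \clT_{k+1} - \alpha_{k+1}\Upupsilon_{k+2}$, into $S^\Delta_{N+1}$ and split the sum into its four pieces. The martingale-difference terms collect into $M_{N+2} = \sum_{k=1}^N \MD_{k+2}$ by definition. The pair $-\clT_{k+2} + \clT_{k+1}$ telescopes, leaving the boundary terms $\clT_2 - \clT_{N+2}$; here I would check the index alignment carefully so that the surviving terms are precisely $\clT_2$ and $-\clT_{N+2}$ and not off-by-one neighbors. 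The last piece is carried along untouched as $-\sum_{k=1}^N\alpha_{k+1}\Upupsilon_{k+2}$, and combining the three contributions gives the stated identity \eqref{e:Sum_Delta_GenSA}.

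It remains to verify that $\{M_{N+2}\}$ is a martingale. Each increment is the martingale difference produced in \Cref{t:Met_Pri_Exp}, $\MD_{k+2} = \haf(\theta_k,\Phi_{k+2}) - \Expect[\haf(\theta_k,\Phi_{k+2})\mid\clF_{k+1}]$, so that $\Expect[\MD_{k+2}\mid\clF_{k+1}] = 0$, while $\MD_{k+2}$ is $\clF_{k+2}$-measurable because $\theta_k$ is $\clF_k$-measurable and $\Phi_{k+2}$ is $\clF_{k+2}$-measurable. Since $M_{N+1}$ is then $\clF_{N+1}$-measurable and $M_{N+2} = M_{N+1} + \MD_{N+2}$, I obtain $\Expect[M_{N+2}\mid\clF_{N+1}] = M_{N+1}$, so $\{M_{N+2}\}$ is a martingale with respect to $\{\clF_{N+2}\}$. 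Integrability, needed for this to be well posed, is supplied by the uniform bound $\sup_k\|\MD_k\|_p \le \bdd{t:Delta_bdd}$ of \Cref{t:Delta_bdd}(ii).

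I expect no substantial obstacle here: the entire argument is bookkeeping once \Cref{t:Met_Pri_Exp} is in hand. The only points requiring any care are the index alignment in the telescoping sum and confirming the adaptedness and integrability that justify calling $\{M_{N+2}\}$ a martingale, both of which are routine.
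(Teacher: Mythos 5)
Your proposal is correct and is exactly the argument the paper intends: the paper states the lemma without a displayed proof, noting only that it follows "by averaging both sides of \eqref{e:SA_recur} and applying the noise decomposition in \Cref{t:Met_Pri_Exp}," which is precisely your rearrangement, summation, and telescoping, with the martingale property of $M_{N+2}$ following from the definition of $\MD_{k+2}$ in \eqref{e:decomp_def} and the integrability supplied by \Cref{t:Delta_bdd}.
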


The next two lemmas will be used repeatedly to establish several of the remaining results of the paper. 
\begin{lemma}[Summation by Parts]
\label[lemma]{t:Sum_parts}
For any two real-valued sequences $\{x_n,y_n: n \geq 0\}$ and integers $0 \leq N_0 < N$,
\[
\sum_{k=N_0+1}^N x_k(y_{k} - y_{k-1}) 
= x_{N+1}y_{N} - x_{N_0+1}y_{N_0} - \sum_{k=N_0+1}^{N} (x_{k+1} - x_k) y_k
\]
\qed
\end{lemma}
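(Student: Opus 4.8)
The plan is to prove this purely by algebraic index manipulation; no probabilistic or analytic input is needed, since this is the classical Abel summation identity valid for arbitrary real sequences. First I would split the left-hand side into two sums,
\[
\sum_{k=N_0+1}^N x_k(y_k - y_{k-1}) = \sum_{k=N_0+1}^N x_k y_k - \sum_{k=N_0+1}^N x_k y_{k-1},
\]
and then reindex the second sum via the substitution $j = k-1$, which gives $\sum_{k=N_0+1}^N x_k y_{k-1} = \sum_{k=N_0}^{N-1} x_{k+1} y_k$.

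Second, I would expand the claimed right-hand side in a parallel fashion. Writing $(x_{k+1}-x_k)y_k = x_{k+1}y_k - x_k y_k$ yields
\[
x_{N+1}y_N - x_{N_0+1}y_{N_0} - \sum_{k=N_0+1}^N (x_{k+1}-x_k) y_k = x_{N+1}y_N - x_{N_0+1}y_{N_0} - \sum_{k=N_0+1}^N x_{k+1} y_k + \sum_{k=N_0+1}^N x_k y_k.
\]
The block $\sum_{k=N_0+1}^N x_k y_k$ appears on both sides and cancels, so the claim reduces to verifying
\[
-\sum_{k=N_0}^{N-1} x_{k+1} y_k = x_{N+1}y_N - x_{N_0+1}y_{N_0} - \sum_{k=N_0+1}^N x_{k+1} y_k.
\]

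Third, I would dispatch this remaining identity by the elementary observation that the two sums $\sum_{k=N_0}^{N-1} x_{k+1}y_k$ and $\sum_{k=N_0+1}^N x_{k+1}y_k$ share the common interior block $\sum_{k=N_0+1}^{N-1} x_{k+1}y_k$; their difference is therefore exactly the two boundary terms $x_{N+1}y_N - x_{N_0+1}y_{N_0}$, which is precisely the telescoping required. An equally short alternative would be induction on $N$, with base case $N=N_0+1$ reducing both sides to $x_{N_0+1}(y_{N_0+1}-y_{N_0})$ and the inductive step adding the single term $x_{N+1}(y_{N+1}-y_N)$ to each side. The only thing to watch is the bookkeeping of summation limits and the off-by-one shifts in the indices; there is no genuine mathematical obstacle here, as the identity imposes no hypotheses on the sequences beyond $0 \le N_0 < N$.
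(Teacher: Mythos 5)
Your proof is correct: the split-and-reindex computation, the cancellation of the common block $\sum_{k=N_0+1}^N x_k y_k$, and the final telescoping comparison of boundary terms all check out (as does the proposed induction alternative). The paper itself offers no proof of this lemma---it is stated with an immediate \qed as the classical Abel summation identity---so your argument simply supplies the routine verification the authors took for granted.
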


\begin{lemma}
\label[lemma]{t:alpha_avg}
Under (A1) the following bounds hold: for a constant $\bdd{t:alpha_avg}<\infty$
	\begin{romannum}
		\item
		$\displaystyle
		\sum_{k=1}^N \Big| \frac{1}{\alpha_{k+1}} - \frac{1}{\alpha_{k}}  \Big| \sqrt{\alpha_k} 
		\leq 
		\frac{2}{\sqrt{\alpha_0}} N^{\rho/2} + \bdd{t:alpha_avg}
		$
		\item 		
		$\displaystyle
		\sum_{k=1}^N \alpha_{k}  
		\leq 
		\frac{\alpha_0}{1-\rho} N^{1-\rho} + \bdd{t:alpha_avg}
		$
		
		\item 		
		$\displaystyle
		\sum_{k=1}^N \frac{1}{k+1}  
		\leq 
		\log(N+1) + \bdd{t:alpha_avg}
		$
		
\end{romannum}
\qed
\end{lemma}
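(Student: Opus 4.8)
The plan is to establish all three bounds by comparing the (eventually monotone) summands to integrals, with part~(i) additionally invoking the mean value theorem to control the increments $1/\alpha_{k+1}-1/\alpha_k$. In each case the integral comparison produces the exact leading term, and the constant $\bdd{t:alpha_avg}$ absorbs the $O(1)$ remainder.

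For part~(i), I would first rewrite the summand explicitly. Since $1/\alpha_k = \alpha_0^{-1}k^\rho$, the increment $1/\alpha_{k+1}-1/\alpha_k = \alpha_0^{-1}[(k+1)^\rho - k^\rho]$ is positive, and $\sqrt{\alpha_k} = \sqrt{\alpha_0}\,k^{-\rho/2}$, so the $k$-th term equals $\alpha_0^{-1/2}[(k+1)^\rho-k^\rho]k^{-\rho/2}$. The mean value theorem gives $(k+1)^\rho - k^\rho = \rho\,\xi_k^{\rho-1}$ for some $\xi_k\in(k,k+1)$, and since $\rho-1<0$ this is at most $\rho k^{\rho-1}$; hence the $k$-th term is bounded by $\alpha_0^{-1/2}\rho\,k^{\rho/2-1}$. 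As $x^{\rho/2-1}$ is decreasing, I would compare to its integral, $\sum_{k=1}^N k^{\rho/2-1}\le 1 + \int_1^N x^{\rho/2-1}\,dx = 1 + \tfrac{2}{\rho}(N^{\rho/2}-1)$. Multiplying by $\alpha_0^{-1/2}\rho$ yields $\tfrac{2}{\sqrt{\alpha_0}}N^{\rho/2} + \alpha_0^{-1/2}(\rho-2)$, and since $\rho-2<0$ the stated bound follows (with room to spare).

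Parts~(ii) and~(iii) are more direct. For~(ii), writing $\sum_{k=1}^N\alpha_k = \alpha_0\sum_{k=1}^N k^{-\rho}$ and comparing the decreasing summand to its integral gives $\sum_{k=1}^N k^{-\rho}\le 1 + \int_1^N x^{-\rho}\,dx = 1 + \tfrac{N^{1-\rho}-1}{1-\rho}$; multiplying through by $\alpha_0$ isolates the leading term $\tfrac{\alpha_0}{1-\rho}N^{1-\rho}$ together with a (negative) constant. For~(iii), the cleanest route is the one-sided bound $\tfrac{1}{k+1}\le \int_k^{k+1}\tfrac{dx}{x}$, which holds because $1/x$ is decreasing on $[k,k+1]$; summing telescopes the integrals into $\int_1^{N+1}\tfrac{dx}{x} = \log(N+1)$, already giving the claimed inequality (in fact with remainder zero). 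Taking $\bdd{t:alpha_avg}$ to be a common nonnegative upper bound for the three $O(1)$ remainders completes the proof.

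There is no serious obstacle here: the argument is a sequence of standard monotone sum-versus-integral comparisons. The only point demanding care is matching the precise leading coefficient $2/\sqrt{\alpha_0}$ in~(i), which is why I would track the mean value theorem estimate and the integral of $x^{\rho/2-1}$ exactly rather than through crude constants.
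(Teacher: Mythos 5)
Your proof is correct: the mean value theorem bound $(k+1)^\rho-k^\rho\le\rho\,k^{\rho-1}$, the monotone sum-versus-integral comparisons, and the tracking of the $O(1)$ remainders all check out, and the leading coefficients $2/\sqrt{\alpha_0}$, $\alpha_0/(1-\rho)$, and $1$ are matched exactly. The paper omits the proof entirely (the lemma is stated with \qed, treating it as routine), and your integral-comparison argument is precisely the standard calculation that omission presupposes.
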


In order to obtain moment bounds for PR averaging, \Cref{t:Targetbias_rep} tells us we need to first analyze the right hand side of \eqref{e:Targetbias_almost_as}. \Cref{t:Sum_delta_theta_bounds,t:Sum_delta_theta_bounds2} establish moment bounds for $S^\uptau$ as well as for each term in \eqref{e:Sum_Delta_GenSA}. 
\begin{lemma}
\label[lemma]{t:Sum_delta_theta_bounds}
Under the assumptions of \Cref{t:BigBounds}, there is $\bdd{t:Sum_delta_theta_bounds}$ depending upon $\Psi_0$ and $\rho$ such that the following bounds hold
\begin{romannum}
\item  $\|S^\uptau_N\|_2 
\leq \bdd{t:Sum_delta_theta_bounds} N^{\rho/2}$
\item $\|\clT_{N+1} - \clT_{1} \|_2 
\leq \bdd{t:Sum_delta_theta_bounds}$
\item $\|\sum_{k=1}^N  \alpha_{k+1} \Upupsilon_{k+1} \|_p 
\leq \bdd{t:Sum_delta_theta_bounds} N^{1-\rho}$
\end{romannum}		
\end{lemma}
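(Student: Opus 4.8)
The plan is to handle the three bounds separately. Parts (ii) and (iii) follow almost immediately from the uniform $L_p$ estimates in \Cref{t:Delta_bdd} together with the step-size sums of \Cref{t:alpha_avg}; the real work is in part (i), which requires summation by parts to convert an apparently $O(N)$ sum into the claimed $O(N^{\rho/2})$ bound. At the end the three constants can be combined into a single $\bdd{t:Sum_delta_theta_bounds}$.

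For (iii) I would apply Minkowski's inequality followed by the uniform bound $\sup_k\|\Upupsilon_k\|_p\le \bdd{t:Delta_bdd}$ of \Cref{t:Delta_bdd}(ii):
\[
\Big\| \sum_{k=1}^N \alpha_{k+1}\Upupsilon_{k+2}\Big\|_p \le \sum_{k=1}^N \alpha_{k+1}\|\Upupsilon_{k+2}\|_p \le \bdd{t:Delta_bdd}\sum_{k=1}^N \alpha_{k+1},
\]
and then \Cref{t:alpha_avg}(ii) bounds the final sum by $O(N^{1-\rho})$. For (ii), since $\clT_{k+1}=\haf(\theta_k,\Phi_{k+1})$, the uniform bound $\sup_k\|\clT_k\|_p\le \bdd{t:Delta_bdd}$ and the triangle inequality (using $\|\cdot\|_2\le\|\cdot\|_p$ for $p\ge 2$, which holds here since $p\ge 4$) give $\|\clT_{N+2}-\clT_2\|_2\le \|\clT_{N+2}\|_2+\|\clT_2\|_2\le 2\bdd{t:Delta_bdd}$, a constant uniform in $N$.

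The substantive claim is (i). The key point is that the increments telescope, $\theta_{k+1}-\theta_k=\tiltheta_{k+1}-\tiltheta_k$, so centering at $\theta^*$ and applying summation by parts (\Cref{t:Sum_parts}) with $x_k=1/\alpha_{k+1}$ and $y_k=\tiltheta_{k+1}$ yields
\[
S^\uptau_N=\frac{1}{\alpha_{N+2}}\tiltheta_{N+1}-\frac{1}{\alpha_2}\tiltheta_1-\sum_{k=1}^N\Big(\frac{1}{\alpha_{k+2}}-\frac{1}{\alpha_{k+1}}\Big)\tiltheta_{k+1}.
\]
I would bound the three pieces in $L_2$ using the moment estimate $\|\tiltheta_n\|_2\le \bdd{t:BigBounds}\sqrt{\alpha_n}$ from \eqref{e:Pmoment_SA} (\Cref{t:BigBounds}(i) with $p=2$). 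The boundary term obeys $\|\alpha_{N+2}^{-1}\tiltheta_{N+1}\|_2\le \bdd{t:BigBounds}\sqrt{\alpha_{N+1}}/\alpha_{N+2}=O(N^{\rho/2})$, since $\alpha_{N+1}/\alpha_{N+2}$ is bounded; the constant term is $O(1)$; and for the sum the triangle inequality gives
\[
\Big\|\sum_{k=1}^N\Big(\tfrac{1}{\alpha_{k+2}}-\tfrac{1}{\alpha_{k+1}}\Big)\tiltheta_{k+1}\Big\|_2\le \bdd{t:BigBounds}\sum_{k=1}^N\Big|\tfrac{1}{\alpha_{k+2}}-\tfrac{1}{\alpha_{k+1}}\Big|\sqrt{\alpha_{k+1}},
\]
which after the index shift $j=k+1$ is exactly the weighted sum controlled by \Cref{t:alpha_avg}(i) and hence is $O(N^{\rho/2})$. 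Collecting these three estimates proves (i).

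The conceptual heart of the argument, and the main obstacle, is precisely part (i): a naive termwise bound on $S^\uptau_N=\sum_k f(\theta_k,\Phi_{k+1})$ only yields $O(N)$, so the improvement to $O(N^{\rho/2})$ is genuinely extracted from pairing summation by parts with the $\sqrt{\alpha_n}$ decay of the parameter error. Centering at $\theta^*$ is essential: without it the boundary term would be $\alpha_{N+2}^{-1}\theta^*=O(N^{\rho})$, which is far too large, and it is exactly the cancellation $\theta_{k+1}-\theta_k=\tiltheta_{k+1}-\tiltheta_k$ that allows the difference weights $1/\alpha_{k+2}-1/\alpha_{k+1}$ to combine with $\sqrt{\alpha_{k+1}}$ into the sum of \Cref{t:alpha_avg}(i).
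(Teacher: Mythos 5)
Your proposal is correct and follows essentially the same route as the paper's own proof: summation by parts (\Cref{t:Sum_parts}) combined with the moment bound $\|\tiltheta_n\|_2 \le \bdd{t:BigBounds}\sqrt{\alpha_n}$ and \Cref{t:alpha_avg}~(i) for part (i), and the uniform bounds of \Cref{t:Delta_bdd}~(ii) together with \Cref{t:alpha_avg}~(ii) for parts (ii) and (iii). The only cosmetic difference is your indexing in the Abel summation (weights $1/\alpha_{k+2}-1/\alpha_{k+1}$ versus the paper's $1/\alpha_{k+1}-1/\alpha_k$), which after your index shift is absorbed into the same $O(N^{\rho/2})$ bound.
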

\begin{proof}
To prove part (i), we apply \Cref{t:Sum_parts}, followed by the triangle inequality to obtain
\[
\begin{aligned}
\|S^\uptau_N\|_2 
&  = 
\Big\|\frac{1}{\alpha_{N+1}} \tiltheta_{N+1} - \frac{1}{\alpha_{2}} \tiltheta_{1}  
-  \sum_{k=1}^N \Big( \frac{1}{\alpha_{k+1}} - \frac{1}{\alpha_{k}} \Big)\tiltheta_{k} \Big\|_2 
\\ 
&
\leq 
\frac{1}{\alpha_{N+1}} \| \tiltheta_{N+1} \|_2 + \frac{1}{\alpha_{2}} \| \tiltheta_{1} \|_2 
+ \sum_{k=1}^N \Big| \frac{1}{\alpha_{k+1}} - \frac{1}{\alpha_{k}} \Big| \| \tiltheta_{k} \|_2
\end{aligned}
\]
Jensen's inequality along with the bounds in \Cref{t:BigBounds} gives $ \| \tiltheta_n \|_2 \leq  \| \tiltheta_n \|_p \leq \bdd{t:BigBounds}\sqrt{\alpha_n} $. An application of part (i) of \Cref{t:alpha_avg} yields the final bound.

The result in part (ii) is implied directly from \Cref{t:Delta_bdd}~(ii) and the triangle inequality.

Part (iii) is obtained as follows:  
\[
\Big\| \sum_{k=1}^N \alpha_{k+1}  \Upupsilon_{k+1} \Big\|_p
\leq 
\Big(\sup_{k \geq 0}\|\Upupsilon_{k+1} \|_p \Big) \sum_{k=1}^N|\alpha_{k+1}|
\]
and \Cref{t:Delta_bdd}~(ii) gives $\sup_{k \geq 0}\|\Upupsilon_{k+1} \|_p \leq \bdd{t:Delta_bdd}$. An application of part (ii) of \Cref{t:alpha_avg} finishes the proof.
\end{proof}

The following notation is adopted to define parameter independent disturbance processes: let $M^*_{N+1} = \sum_{k=1}^N \clW^*_{k+1} $ with
\begin{equation}
\clW^*_{k+1} 
\eqdef
\haf(\theta^*,\Phi_{k+1}) - \Expect[\haf(\theta^*,\Phi_{k+1}) \mid \clF_{k}] 
%
\label{e:parameter_indep}
\end{equation}
The above martingale difference sequence is the ``noise'' sequence defining the optimal asymptotic covariance of SA \eqref{e:Cov_lowerbound_def}.

\begin{lemma}
\label[lemma]{t:Sum_delta_theta_bounds2}
Under the assumptions of \Cref{t:BigBounds}, there is $\bdd{t:Sum_delta_theta_bounds2}$ depending upon $\Psi_0$ and $\rho$ such that the following bounds hold
\begin{romannum}
\item  $\| M_{N+1} - M^*_{N+1}  \|_2 
\leq 
\bdd{t:Sum_delta_theta_bounds2} N^{(1 - \rho)/2}$
\item $\| M^*_{N+1}  \|_2 
\leq  
\bdd{t:Sum_delta_theta_bounds2} N^{1/2}$
\end{romannum}		
\end{lemma}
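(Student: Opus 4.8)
The plan is to exploit that both $M_{N+2}=\sum_{k=1}^N \MD_{k+2}$ and $M^*_{N+2}=\sum_{k=1}^N \clW^*_{k+2}$ are $L_2$ martingales adapted to $\{\clF_n\}$. Setting $\delta\clW_{k+2}\eqdef \MD_{k+2}-\clW^*_{k+2}$, the difference $M_{N+2}-M^*_{N+2}=\sum_{k=1}^N \delta\clW_{k+2}$ is again a martingale: by the definitions in \Cref{t:Met_Pri_Exp} and \eqref{e:parameter_indep}, each $\delta\clW_{k+2}$ is $\clF_{k+2}$-measurable with $\Expect[\delta\clW_{k+2}\mid\clF_{k+1}]=0$. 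Orthogonality of martingale increments then decouples the second moments, $\|M_{N+2}-M^*_{N+2}\|_2^2=\sum_{k=1}^N \Expect[\|\delta\clW_{k+2}\|^2]$ and $\|M^*_{N+2}\|_2^2=\sum_{k=1}^N \Expect[\|\clW^*_{k+2}\|^2]$, so both parts reduce to per-increment second-moment bounds.

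For part (ii) the increment $\clW^*_{k+2}=\haf(\theta^*,\Phi_{k+2})-\Expect[\haf(\theta^*,\Phi_{k+2})\mid\clF_{k+1}]$ involves only the fixed $\theta^*$. Discarding the conditional-mean term by conditional Jensen and invoking the growth bound \eqref{e:Lip_haf} of \Cref{t:bounds-H}, namely $\|\haf(\theta^*,x)\|\le b_f(1+V(x))(1+\|\theta^*\|)$, I would bound $\Expect[\|\clW^*_{k+2}\|^2]\le c\,\Expect[(1+V(\Phi_{k+2}))^2]$. The right side is bounded uniformly in $k$ because (A4)/(DV3) controls all polynomial moments of $V(\Phi_k)$ uniformly in $k$ through the $v$-uniform ergodicity of \Cref{t:Vuni+CLT} (with the constant absorbing the dependence on $\Psi_0$). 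Summing over $k=1,\dots,N$ gives $\|M^*_{N+2}\|_2^2\le cN$, i.e. the claimed $O(N^{1/2})$.

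The crux is part (i), where the factor $\alpha_k$ must be extracted. The same conditional-Jensen step gives $\Expect[\|\delta\clW_{k+2}\|^2]\le 4\,\Expect[\|\haf(\theta_k,\Phi_{k+2})-\haf(\theta^*,\Phi_{k+2})\|^2]$, and the Lipschitz bound in \eqref{e:Lip_haf} converts this to $c\,\Expect[(1+V(\Phi_{k+2}))^2\|\tiltheta_k\|^2]$. The key manoeuvre is a Hölder split with conjugate exponents $r,s>1$: the factor $(\Expect[(1+V(\Phi_{k+2}))^{2s}])^{1/s}$ remains bounded uniformly in $k$ by the $V$-moment bounds above, while $(\Expect[\|\tiltheta_k\|^{2r}])^{1/r}\le c\,\alpha_k$ follows from the $L_p$ moment bound \eqref{e:Pmoment_SA} of \Cref{t:BigBounds}(i) with $p=2r$. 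This produces $\Expect[\|\delta\clW_{k+2}\|^2]\le c\,\alpha_k$, and \Cref{t:alpha_avg}(ii) gives $\sum_{k=1}^N\alpha_k\le c'N^{1-\rho}$; taking square roots yields $O(N^{(1-\rho)/2})$.

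The main obstacle I anticipate is legitimizing the Hölder step, which uses \Cref{t:BigBounds}(i) at an arbitrarily large moment order. This is sound because the standing constraint \eqref{e:pAssumptions} bounds $p$ only by $\pmax=1/(\delta_L(r)[T+1])$, and \Cref{t:Lemma2clt} allows $\delta_L(r)$ to be made as small as desired, hence $\pmax$ arbitrarily large; the bound for any fixed order then follows by Jensen. Since any $r>1$ already recovers the full factor $\alpha_k$, there is no tension between the two exponents, and one need only take care that the $V$-moment factor is finite along the transient chain—again furnished by the drift inequality (DV3) rather than stationarity.
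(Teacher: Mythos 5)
Your proof is correct and follows essentially the same route as the paper's: orthogonality of martingale increments, the Lipschitz/growth bounds on $\haf$ from \Cref{t:bounds-H}, the $L_p$ moment bound \eqref{e:Pmoment_SA} of \Cref{t:BigBounds}, and summation of the step-sizes via \Cref{t:alpha_avg}~(ii). The only difference is one of detail: you make the H\"older split controlling $\Expect[(1+V(\Phi_{k+2}))^2\|\tiltheta_k\|^2]$ explicit, whereas the paper asserts the per-increment bound $\|\MD_{k+2}-\MD^*_{k+2}\|_2^2\le b\,\alpha_k$ directly, with the joint moment control of \Cref{t:Delta_bdd}~(i) implicitly doing that work.
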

\begin{proof}
For part (i) we use the martingale difference property to obtain
\[
\|M_{N+1} - M^*_{N+1}\|^2_2 
= \Big \|  \sum_{k=1}^N  \MD_{k+1} - \MD^*_{k+1}   \Big\|^2_2   
=  \sum_{k=1}^N \| \MD_{k+1} - \MD^*_{k+1} \|^2_2
\]
From \Cref{t:bounds-H} and \Cref{t:BigBounds}, we have that $\|  \MD_{k+1} - \MD^*_{k+1} \|^2 \leq \bdd{t:bounds-H}\| \tiltheta_n \|^2_2 \leq \bdd{t:bounds-H} \bdd{t:BigBounds} \alpha_n $ for some constant $\bdd{t:Sum_delta_theta_bounds2}$ depending upon $\Psi_0$. An application of  part (ii) of \Cref{t:alpha_avg} yields (i).

The proof of part (ii) follows similarly to (i): since $\{\MD^*_{k+1}\}$ is a martingale difference sequence,
\[
\| M^*_{N+1}\|^2_2  =  \sum_{k=1}^N \| \MD^*_{k+1} \|^2_2
\leq  
N \Big( \sup_{k \geq 0}\|\MD^*_{k+1} \|_2^2 \Big) 
\leq 
\bdd{t:Delta_bdd} N
\]
where the last bound is obtained from an application of \Cref{t:Delta_bdd}~(ii).
\end{proof}

We are now equipped to prove part (iii) of \Cref{t:BigBounds}. Only the proof for the  multiplicative (MU) noise case is given. The result for when the noise is AD  follows almost identically to the MU setting with the only difference being that $\Upupsilon \equiv 0$, as explained after \Cref{t:Met_Pri_Exp}.

\begin{proof}[Proof of part (iii) of \Cref{t:BigBounds}]
A Taylor series expansion of $\barf(\theta_k)$ around $\theta^*$ gives
\begin{subequations}
\begin{equation}
	\barf(\theta_k) = \barf(\theta^*) + A^* (\theta_k - \theta^*) + \clE^T_k
\end{equation}
where under the assumptions of the theorem, the second order Taylor series expansion error $\clE^T$ admits the upper bound:
\begin{equation}
	\| \clE^T_k \|_p
	\leq 
	L_A\| \tiltheta_k \|^2_p 
	\leq 
	L_A  \bdd{t:BigBounds} \alpha_k
	\label{e:TS_bound}
\end{equation}
\label{eq:TS_approx}
\end{subequations}
in which $L_A$ is the Lipschitz constant associated with $\barA$.

In view of \eqref{eq:TS_approx}, \eqref{e:Targetbias_almost_as} is equivalently expressed as
\[
\thetaPR_N - \theta^*
= 
[A^*]^{-1} \frac{1}{N} \Big[ S^\uptau_N   - S^\Delta_{N+1} + \sum_{k=1}^N \clE^T_k   + M^*_{N+1}    - M^*_{N+1}\Big] 
\]

Taking $L_2$ norms on both sides and applying the triangle inequality we obtain,
\[
\begin{aligned}
\| \tilthetaPR_N  \|_2 
&\leq
\|[A^*]^{-1}\|_F \frac{1}{N} 
\Big(  
\|S^\uptau_N \|_2 + \| S^\Delta_{N+1} - M^*_{N+1}  \|_2 
+ \sum_{k=1}^N \| \clE^T_k\|_2   
+\| M^*_{N+1}\|_2  
\Big)   
\\
&\leq 
\|[A^*]^{-1}\|_F \frac{1}{N} 
(\epsy_N^a  + \epsy_N^b + \epsy_N^c)
\end{aligned}
\]
in which $\| \varble \|_F$ denotes the Frobenius norm and the identity \eqref{e:Sum_Delta_GenSA} gives
\[
\begin{aligned}
\epsy_N^a &= \|S^\uptau_N \|_2 +\| M_{N+1} - M^*_{N+1}\|_2 + \|\clT_{N+1} + \clT_{1}\|_2 
\\
\epsy_N^b &= \Big\| \sum_{k=1}^N \alpha_{k+1} \Upupsilon_{k+1} \Big\|_2 + \sum_{k=1}^N \| \clE^T_k\|_2
\\
\epsy_N^c &= \|M^*_{N+1}\|_2
\end{aligned}
\]
\Cref{t:Sum_delta_theta_bounds,t:Sum_delta_theta_bounds2} and \eqref{e:TS_bound} yield the following bounds: for a constant $\bdd{t:BigBounds}$ depending  upon $\Psi_0$ and $\rho$,
\[
\epsy_N^a \leq \bdd{t:BigBounds} \max{\{N^{\rho/2}, N^{(1-\rho)/2}\}} 
\, , 
\quad 
\epsy_N^b \leq \bdd{t:BigBounds} N^{1-\rho}
\,,
\quad 
\epsy_N^c \leq \bdd{t:BigBounds} N^{1/2}
\]
The error is dominated by $\epsy_N^b$ when $\rho<1/2$ and by  $\epsy_N^c$ when $\rho>1/2$, completing the proof.
\end{proof}

We conclude this section with a proof of the identity in \eqref{e:target_bias_lim} for the average target bias:  let $\haA(\theta,x) = \partial_\theta \haf(\theta,x)$, $\partial_\theta \uppsi(\theta,x) \eqdef A(\theta,x) - \haA(\theta,x) $ and
\begin{equation}
\Upupsilon^*_{k+1} \eqdef \partial_\theta \uppsi(\theta^*,\Phi_{k+1}) f(\theta^*,\Phi_{k+1}) 
\, , \quad 
\barUpupsilon^* = \Expect[ \Upupsilon^*_{k}]
\label{e:Upstar}
\end{equation}

\begin{lemma}
\label[lemma]{t:Up_haAf}
Suppose (A1)--(A5) hold. Then, for each $\rho \in (0,1)$ there is $\bdd{t:Up_haAf}$ depending upon $\Psi_0$ and $\rho$,
\begin{romannum}
\item  $\displaystyle \Upupsilon_{k+1}  = \partial_\theta \uppsi(\theta_{k}, \Phi_{k+1}) f(\theta_{k}, \Phi_{k+1}) +  \bdd{t:Up_haAf} \alpha_{k+1}$

\item $\displaystyle  \| \Upupsilon_{k+1} - \Upupsilon^*_{k+1}   \|_2 \leq \bdd{t:Up_haAf} \alpha^{1/2}_{k+1}$
\end{romannum}
\end{lemma}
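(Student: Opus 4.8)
The plan is to obtain part~(i) from a first-order Taylor expansion of $\haf$ in its parameter argument, and then to read off part~(ii) from part~(i) together with the Lipschitz bounds for $\haf$, $\haA$, and $f$. Recall from \eqref{e:UpDef} that $\Upupsilon_{k+2} = -\alpha_{k+1}^{-1}[\haf(\theta_{k+1},\Phi_{k+2}) - \haf(\theta_k,\Phi_{k+2})]$. Since $\haA(\theta,x) = \partial_\theta\haf(\theta,x)$ by definition, a first-order Taylor expansion of $\haf(\cdot,\Phi_{k+2})$ about $\theta_k$ gives
\[
\haf(\theta_{k+1},\Phi_{k+2}) - \haf(\theta_k,\Phi_{k+2}) = \haA(\theta_k,\Phi_{k+2})(\theta_{k+1}-\theta_k) + \clE_k,
\]
and inserting the recursion $\theta_{k+1}-\theta_k = \alpha_{k+1} f(\theta_k,\Phi_{k+1})$ yields $\Upupsilon_{k+2} = -\haA(\theta_k,\Phi_{k+2}) f(\theta_k,\Phi_{k+1}) - \alpha_{k+1}^{-1}\clE_k$.

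It then remains to verify that $\alpha_{k+1}^{-1}\clE_k$ is $O(\alpha_{k+1})$ in the relevant norm. The Lipschitz continuity of $\haA$ from \Cref{t:bounds-ha} bounds the Taylor remainder by $\|\clE_k\| \le b_A(1+V(\Phi_{k+2}))\|\theta_{k+1}-\theta_k\|^2 = b_A(1+V(\Phi_{k+2}))\alpha_{k+1}^2\|f(\theta_k,\Phi_{k+1})\|^2$, whence $\|\alpha_{k+1}^{-1}\clE_k\| \le b_A\alpha_{k+1}(1+V(\Phi_{k+2}))\|f(\theta_k,\Phi_{k+1})\|^2$. Using $\|f(\theta_k,\Phi_{k+1})\| \le L(\Phi_{k+1})(1+\|\theta_k\|)$ from (A2) and then taking $L_p$ norms, the desired $O(\alpha_{k+1})$ bound follows from the parameter moment bounds of \Cref{t:BigBounds} together with \Cref{t:Delta_bdd}(i).

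For part~(ii), part~(i) shows that $\Upupsilon_{k+2} - \Upupsilon^*_{k+2}$ differs from $-[\haA(\theta_k,\Phi_{k+2}) f(\theta_k,\Phi_{k+1}) - \haA(\theta^*,\Phi_{k+2}) f(\theta^*,\Phi_{k+1})]$ by a term of order $\alpha_{k+1} = o(\alpha_{k+1}^{1/2})$, so it suffices to bound the displayed difference in $L_2$. I would add and subtract $\haA(\theta^*,\Phi_{k+2}) f(\theta_k,\Phi_{k+1})$ to write it as
\[
[\haA(\theta_k,\Phi_{k+2}) - \haA(\theta^*,\Phi_{k+2})] f(\theta_k,\Phi_{k+1}) + \haA(\theta^*,\Phi_{k+2})[f(\theta_k,\Phi_{k+1}) - f(\theta^*,\Phi_{k+1})].
\]
The first bracket is $O(\|\tiltheta_k\|)$ by the Lipschitz bound for $\haA$ (\Cref{t:bounds-ha}) and the second by the Lipschitz bound (A2) for $f$; in both terms the remaining factors are functions of $\Phi_{k+1},\Phi_{k+2}$ with moments of all orders. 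Since \Cref{t:BigBounds}(i) gives $\|\tiltheta_k\|_p \le \bdd{t:BigBounds}\alpha_k^{1/2}$ for every $p$, Hölder's inequality produces an $L_2$ bound of order $\alpha_k^{1/2} \asymp \alpha_{k+1}^{1/2}$.

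The main obstacle is the joint moment control needed to pass from these pointwise estimates to $L_p$ bounds, since every factor couples the $\clF_k$-measurable error $\tiltheta_k$ with functions $V(\Phi_{k+2})$, $L(\Phi_{k+1})$, $\|f(\theta_k,\Phi_{k+1})\|$ of the future noise. I would condition on $\clF_{k+1}$ and invoke (DV3) to bound $\Expect[\exp(V(\Phi_{k+2})) \mid \clF_{k+1}] \le e^b\exp(V(\Phi_{k+1}))$, thereby reducing any moment at time $k+2$ to one at time $k+1$; the remaining joint moments of $\tiltheta_k$ and $\Phi_{k+1}$ are then controlled by Hölder's inequality together with \Cref{t:Delta_bdd}(i) and the polynomial moments of $V$ and $L$ guaranteed by (A4). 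Because the parameter moment bounds of \Cref{t:BigBounds} are available for arbitrary $p$, there is always enough integrability to close both estimates.
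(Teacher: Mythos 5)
Your proposal is correct and takes essentially the same route as the paper's proof: part (i) via a first-order Taylor expansion of $\haf$ with the remainder controlled by the Lipschitz bound of \Cref{t:bounds-ha} together with the moment bounds of \Cref{t:BigBounds} and \Cref{t:Delta_bdd}, and part (ii) via the add-and-subtract/triangle-inequality argument using Lipschitz continuity of $\haA$ and $f$ plus \eqref{e:Pmoment_SA}. The only differences are cosmetic---the paper bounds $\|\theta_{k+1}-\theta_k\|_2$ through \eqref{e:NoisyEuler} and \Cref{t:Delta_bdd}~(ii) where you bound $\|f(\theta_k,\Phi_{k+1})\|$ pathwise via (A2), and your explicit handling of the joint moments (conditioning on $\clF_{k+1}$ and invoking (DV3)) is, if anything, more careful than the paper's, which absorbs that step into the constant.
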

\begin{proof}
A Taylor series expansion of $\haf$ around $\theta_k$ gives  for each $k$
\begin{equation}
\uppsi(\theta_{k+1}, \Phi_{k+1}) - \uppsi(\theta_{k}, \Phi_{k+1})  = 
\partial_\theta \uppsi(\theta_{k}, \Phi_{k+1}) (\theta_{k+1} -\theta_{k}) + \clE^{T}_k
\label{e:Taylor}
\end{equation}
where $\clE^{T}$ denotes the approximation error. By the mean value theorem, there is $\theta^\circ \in (\theta_{k},\theta_{k+1})$ such that
$
\clE^{T}_k = [\partial_\theta \uppsi(\theta^\circ, \Phi_{k+1})-\partial_\theta \uppsi(\theta_{k}, \Phi_{k+1}) ]  (\theta_{k+1} -\theta_{k})
$.
\Cref{t:bounds-ha} implies the upper bound: $\| \clE^{T}_k \|_2  \leq  \bdd{t:Up_haAf} \| \theta_{k+1} -\theta_{k} \|_2^2$ with $\bdd{t:Up_haAf}$ depending upon $\Psi_0$ and $\rho$.

Using the identity \eqref{e:NoisyEuler} and Lipschitz continuity of $\barf$, we obtain, through the triangle inequality,
\[
\| \theta_{k+1} -\theta_{k} \|_2 = \alpha_{k+1} \| \barf(\theta_k) + \Delta_{k+1} \|_2
\leq  \alpha_{k+1} (L_{\bar{f}} \| \tiltheta_k\|_2 + \|\Delta_{k+1} \|_2) \leq \alpha_{k+1} \bdd{t:Up_haAf}
\]
in which $L_{\bar{f}}$ is the Lipschitz constant associated with $\barf$ and the last bound follows from \eqref{e:Pmoment_SA} and \Cref{t:Delta_bdd}~(ii), for a potentially larger $\bdd{t:Up_haAf}$.

Applying \eqref{e:SA_recur} to the right side of \eqref{e:Taylor} completes the proof of part (i), in view of  the definition of $\Upupsilon$ in \eqref{e:UpDef}.

The result in part (ii) follows from Lipschitz continuity of $\haA$ and $A$, \eqref{e:Pmoment_SA}, and part (i), via the triangle inequality.
\end{proof}

\begin{proof}[Proof of part (iv) of \Cref{t:BigBounds}]
The first step is to take expectations of both sides of \eqref{e:Targetbias_almost_as}. From \eqref{e:DeltaDecomp}, we obtain $\Expect[\Delta_{k+1}] = \Expect[-\clT_{k+1} + \clT_{k} - \alpha_{k+1} \Upupsilon_{k+1}]$, which in turn gives,
\[
\frac{1}{N} \sum_{k=1}^N \Expect[\barf(\theta_k)]= \frac{1}{N}\Big( \Expect[S^\uptau_N] + \Expect[\clT_{N+1} - \clT_{1} ] + \sum_{k=1}^N  \alpha_{k+1}  \Expect[ \Upupsilon_{k+1}] \Big)
\]
Adding and subtracting $\frac{1}{N}\sum_{k=1}^N  \alpha_{k+1} \barUpupsilon^*$ to the right side of the above equation with $\barUpupsilon^*$ defined in \eqref{e:Upstar} yields
\begin{equation}
\frac{1}{N} \sum_{k=1}^N \Expect[\barf(\theta_k)] =  \frac{1}{N}\Big( \barUpupsilon^* \sum_{k=1}^N [\alpha_{k+1}  +  \epsy^a_k ]  + \epsy^b_N \Big)
\label{e:almosttarget}
\end{equation}
in which
\[
\begin{aligned}
\epsy^a_k&=  \alpha_{k+1} \Expect[ \Upupsilon_{k+1} -  \Upupsilon^*_{k+1}] 
\\
\epsy^b_N  &=  \Expect[S^\uptau_N] + \Expect[\clT_{N+1} - \clT_{1} ] 
\end{aligned}
\]
Bounds on the terms constituting $\epsy^b$ are given by \Cref{t:Sum_delta_theta_bounds} and are as follows:
\begin{equation}
\begin{aligned}
	\|  \Expect[S^\uptau_N] \| 
	\leq \|S^\uptau_N \|_2  
	\leq \bdd{t:Sum_delta_theta_bounds} N^{\rho/2}
	\\
	\|  \Expect[\clT_{N+1} - \clT_{1} ] \| 
	\leq   \| \clT_{N+1} -  \clT_{1} \|_2     
	\leq  \bdd{t:Sum_delta_theta_bounds}
\end{aligned}
\end{equation}
Consequently, $ \| \epsy^b_N  \| \leq \bdd{t:Sum_delta_theta_bounds} N^{\rho/2} $.
A bound on $\epsy^a$ is achieved from \Cref{t:Up_haAf}:
\[
\|\epsy^a_k\| \leq |\alpha_{k+1}|  \| \Expect[ \Upupsilon_{k+1} -  \Upupsilon^*_{k+1} ] \|
\leq  
|\alpha_{k+1}| \| \Upupsilon_{k+1} -  \Upupsilon^*_{k+1} \|_2 \leq \bdd{t:Up_haAf} \alpha^{3/2}_{k+1}
\] 
where the constants $\bdd{t:Sum_delta_theta_bounds}$ and $\bdd{t:Up_haAf}$ depend upon $\Psi_0$ and $\rho$.

Using the upper bounds for $\epsy^a$ and $\epsy^b$, we apply \Cref{t:alpha_avg}~(ii) to the right side of \eqref{e:almosttarget} and divide both sides by $\alpha_{N+1}$ to obtain
\begin{equation}
\frac{1}{\alpha_{N+1}}\sum_{k=1}^N \frac{1}{N} \Expect[   \barf(\theta_k) ] = \frac{1}{1-\rho} \barUpupsilon^* + \epsy^c_N
\label{e:targetbias_rate}
\end{equation}
in which
\[
\|\epsy^c_N\| \leq  \bdd{t:stats_summary} \max\{ N^{-\rho/2} ,  N^{3\rho/2 - 1}\}
\]
where $\bdd{t:stats_summary}$ is a constant depending upon $\Psi_0$ and $\rho$. The norm of $\epsy^c_N$ is dominated by $\epsy^a_N$ for $\rho <1/2$ and by $\epsy^b_N$ for $\rho>1/2$. Moreover, $\epsy^b_N$ also dominates the target bias for $\rho>2/3$.

Taking limits of both sides of the above equation yields \eqref{e:target_bias_lim} for $\rho \in (0,2/3)$.

\end{proof}


\def\hau{\hat{u}}
\def\haDelta{\hat{\Delta}}

\section{Noise Decomposition for Linear SA}
\label{s:more_MetPri}

In general, the representation for $\bfDelta$ in \Cref{t:Met_Pri_Exp} does not appear to be enough to obtain the asymptotic covariance for recursions with multiplicative noise, and $\rho < 1$. Restricting to linear recursions allows us to obtain the finer representation  for $\bfDelta$ in  \eqref{e:Deltadecomp_recur_intro}. This expression is obtained based on recursive decompositions of stochastic processes  
that are affine in the parameter.   One example   is $\Upupsilon_{k+1} = (A_{k+1}-\haA_{k+1}) (A_{k+1} \theta_k + b_{k+1})$,  $k\ge 0$  (recall   \eqref{e:upsilon_lin}).    

The next lemma relies on solutions to Poisson's equation as in \Cref{t:Met_Pri_Exp} to define the main step in this recursion.
Its proof is identical to  the proof of  \eqref{e:DeltaDecomp}.

\begin{lemma}
\label[lemma]{t:genG_Met_Decomp}
Suppose the matrix valued function $M\colon\state\to\Re^{d\times d}$ and vector valued function $u\colon\state\to\Re^{ d}$ defining $G(\Psi_k) = M(\Phi_{k+1}) \theta_k   +   u(\Phi_{k+1}) $ satisfy  $\| M (x) \|_F + \| u (x) \|\leq \bdd{t:genG_Met_Decomp} v_+^{\epsy}(x)$ for $\epsy>0$ sufficiently small, and with respective means $\barM$, $\baru$. Let  $\haM\colon\state\to\Re^{d\times d}$ and  $\hau \colon\state\to\Re^{ d}$  denote zero-mean solutions to   Poisson's equation \eqref{e:bigfish}:
\[
\Expect[\haM( \Phi_{k+1}) - \haM(  \Phi_{k})\mid \Phi_k = x]  =  M(x) - \barM \qquad 
\Expect[\hau( \Phi_{k+1}) - \hau(  \Phi_{k})\mid \Phi_k = x]  =  u(x) - \baru   
\]
Then,  denoting  $\haG( \theta,x) = \haM(x)\theta+\hau(x)$ and $\uppsi^G(\theta, x)  = G(\theta,x) - \haG(\theta,x)$,  we have for $k\ge 0$, 
\begin{equation}
\begin{aligned}
	G(\Psi_k) &= M(\Phi_{k+1}) \theta_k   +   u(\Phi_{k+1}) 
	\\
	&  = \MD^G_{k+1} -  \clT^G _{k+1} +  \clT^G _{k} - \alpha_{k+1} \Upupsilon^G_{k+1} + \barG (\theta_k)   
\end{aligned}
\label{e:Gdecomp}
\end{equation}
where  $\barG (\theta_k)   =  \barM \theta_k + \baru$,    and for a constant  $\bdd{t:genG_Met_Decomp}$  independent of $\Psi_0$, 
\whamb
$\{ \MD^G_{k+1} \eqdef \haG( \theta_k,\Phi_{k+1}) - \Expect[\haG( \theta_k,\Phi_{k+1}) \mid \clF_{k}]  \}$ is a martingale difference sequence satisfying  $\Expect[\|\MD^G_{k+1} \|^p \mid \Psi_k = z]   \leq \bdd{t:genG_Met_Decomp} \clV(z) $.

\whamb $
\displaystyle
\clT^G_{k+1} \eqdef \uppsi( \theta_{k+1},\Phi_{k+1})
$ with $\Expect[\|\clT^G_{k+1} \|^p \mid \Psi_k = z]   \leq \bdd{t:genG_Met_Decomp} \clV(z) $.

\whamb $
\displaystyle
\Upupsilon^G_{k+1} \eqdef \frac{1}{\alpha_{k+1}} [\uppsi^G(\theta_{k+1}, \Phi_{k+1}) - \uppsi^G(\theta_{k}, \Phi_{k+1})]
$ with $\Expect[\|\Upupsilon^G_{k+1}\|^p \mid \Psi_k = z]   \leq \bdd{t:genG_Met_Decomp} \clV(z) $.
\qed
\end{lemma}

%
%
%
%

\begin{subequations}

\wham{Construction of the disturbance decomposition}

The  representation in \eqref{e:Deltadecomp_recur_intro} is obtained by applying \Cref{t:genG_Met_Decomp}  recursively, to construct
a sequence of stochastic processes of the form,
\begin{align}
G^{(i)}(\Psi_k) &= M^{(i)} (\Phi_{k+1})\theta_k + u^{(i)}(\Phi_{k+1}) 
\label{e:Gdecomp-def}
\\
&  = \clW^{(i)} _{k+1} -  \clT^{(i)} _{k+1} +  \clT^{(i)}_{k} - \alpha_{k+1}  G^{(i+1)}(\Psi_k) + \barG^{(i)} (\theta_k)   
\label{e:Gdecomp-i}
\end{align}
For each $i$, we denote   
\begin{align}
\MD^{(i)}_{k+1} = \MD^G_{k+1}
\, , \quad 
\clT^{(i)}_{k+1} = \clT^G_{k+1}
\, , \quad 
G^{(i+1)}(\Psi_k) 
= \Upupsilon^{G}_{k+1}
\label{e:Gdecomp-one_more}
\end{align}	
in which these terms are defined as in \Cref{t:genG_Met_Decomp} with $G(\Psi_k) = G^{(i)}(\Psi_k)$.
\label{e:Grecur-i}
\end{subequations}

The recursion is initialized using  
\[
\begin{aligned}
G^{(0)}(\Psi_k) & = f(\Psi_k)= M^{(0)} (\Phi_{k+1})\theta_k + u^{(0)}(\Phi_{k+1})  
\\
\barG^{(0)}(\theta_k) &= \barf(\theta_k)= \barM^{(0)}\theta_k + \baru^{(0)} 
\\
\text{where }  &M^{(0)} (\Phi_{k+1}) = A_{k+1} \, , \quad u^{(0)}(\Phi_{k+1})  = -  b_{k+1}  
\\
&\barM^{(0)} = A^* \, , \qquad \qquad \quad  \baru^{(0)} = -  \barb  
\end{aligned} 
\]

The subsequent functions composing the sequence of affine functions $\{G^{(i)}\}$ in \eqref{e:Grecur-i} are defined through the following steps:

\whamb
For $1\le i\le m$:      Given   $M^{(i)}(\Phi_{k+1}) $ and $ u^{(i)}(\Phi_{k+1})$ that define $G^{(i)}(\Psi_k)$  via \eqref{e:Gdecomp-def},   apply 
\Cref{t:genG_Met_Decomp} with $G(\Psi_k) = G^{(i)}(\Psi_k)$ to obtain the terms $\{\MD^{(i)}_{k+1} , \clT^{(i)}_{k+1}, G^{(i+1)}(\Psi_k)   \}$ in \eqref{e:Gdecomp-i}.

%

%

%

\begin{proposition}
\label[proposition]{t:Met_Recursively}
Subject to the assumptions of \Cref{t:stats_summary},  and for any fixed $m\ge 1$, 
the representation \eqref{e:Deltadecomp_recur_intro} holds
for each $k\ge 0$. Moreover, the terms in \eqref{e:Deltadecomp_recur_intro} satisfy the following for a constant $\bdd{t:Met_Recursively}$ depending upon $\Psi_0$ and $\rho$:  
\begin{romannum}
\item 
$\sup_k \|  \Upupsilon^{\sbullet}_{k+1}  \|_2 \le \bdd{t:Met_Recursively}$

\item  $\{
\MD^\sbullet_{k+2}  :  k\ge 0\}$ is a  martingale difference sequence satisfying 
$\sup _k \|  \MD^{\sbullet}_{k+1}  \|_2 \le \bdd{t:Met_Recursively}$

\item  
$\sup _k \|  \clT^{\sbullet}_{k+1}  \|_2 \le \bdd{t:Met_Recursively}$,  and
$\|  \clT^\sbullet_{k+1} -  \clR^\sbullet_{k+1} \|_2   \le \bdd{t:Met_Recursively} \alpha_{k+1}/k$  for   $k\ge 0$

\item  The deterministic sequences of matrices  $ \{\barG_{k+1}\}  $ and vectors $\{ \baru_{k+1} \}$ are convergent, and hence uniformly bounded in $k$.   
Moreover,
\[
\|  \alpha_{k+1} \barG_{k+1}  - \alpha_k \barG_{k} \|_F  \le \bdd{t:Met_Recursively}    \alpha_k/k \,,  \qquad k\ge 1
\]
\end{romannum}
The special case $m=0$ yields \eqref{e:DeltaDecomp} and  $
\MD^{(0)}_{k+1}  = \MD_{k+1} $, $
\clT^{(0)}_{k+1} = \clT_{k+1}  $ in which $\{\MD_k\}$ and $\{\clT_k\}$ are defined in \eqref{e:decomp_def}.
\end{proposition}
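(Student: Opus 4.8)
The plan is to establish \eqref{e:Deltadecomp_recur_intro} by unrolling the recursive decomposition \eqref{e:Grecur-i} exactly $m$ times and then collecting the resulting terms according to their power of $\alpha_{k+1}$. First I would record, by induction on $i$, that every pair $(M^{(i)},u^{(i)})$ produced by the construction inherits the growth bound $\|M^{(i)}(x)\|_F+\|u^{(i)}(x)\|\le \bdd{t:Met_Recursively}\,v_+^{1/p}(x)$; this is precisely the hypothesis required to apply \Cref{t:cor_genG_Met_Decomp} (resp.\ \Cref{t:genG_Met_Decomp}) at the next stage, and the bound \eqref{e: Bounds_G} guarantees it is reproduced at each step. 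Consequently all per-stage pieces $\{\clD^{(i)}_{k+2},\clT^{(i)}_{k+1},\MD^{(m)}_{k+2},\Upupsilon^{(m)}_{k+2}\}$ obey conditional moment bounds of the form $\Expect[\|\cdot\|^p\mid\Psi_k=z]\le \bdd{t:Met_Recursively}\clV(z)$, and combining these with the uniform moment bounds of \Cref{t:Delta_bdd} (in particular $\sup_k\Expect[\clV(\Psi_k)]<\infty$) yields uniform $L_p$, hence $L_2$, bounds on each piece. This single observation already delivers (i) and (ii) once the terms are identified.

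Next I would carry out the telescoping. Writing each stage as $G^{(i)}(\Psi_k)=D^{(i)}_k-\alpha_{k+1}G^{(i+1)}(\Psi_k)$ with $D^{(i)}_k\eqdef \clD^{(i)}_{k+2}-\clT^{(i)}_{k+2}+\clT^{(i)}_{k+1}+\barG^{(i)}(\theta_k)$, iterating from $i=0$ gives $G^{(0)}(\Psi_k)=\sum_{i=0}^{m-1}(-\alpha_{k+1})^iD^{(i)}_k+(-\alpha_{k+1})^mG^{(m)}(\Psi_k)$, and applying \Cref{t:genG_Met_Decomp} to $G^{(m)}$ produces the leading term $(-\alpha_{k+1})^{m+1}\Upupsilon^{(m)}_{k+2}$. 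Recalling $\Delta_{k+1}=G^{(0)}(\Psi_k)-\barf(\theta_k)$ and $\barG^{(0)}=\barf$, the $i=0$ mean term cancels $\barf(\theta_k)$, and I would read off $\Upupsilon^{\sbullet}_{k+2}=\Upupsilon^{(m)}_{k+2}$, the martingale-difference term $\MD^{\sbullet}_{k+2}=\sum_{i=0}^{m-1}(-\alpha_{k+1})^i\clD^{(i)}_{k+2}+(-\alpha_{k+1})^m\MD^{(m)}_{k+2}$ (still a martingale difference since the $\alpha$-coefficients are deterministic), and the mean term $\sum_{i=1}^{m}(-\alpha_{k+1})^i\barG^{(i)}(\theta_k)$. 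Substituting $\theta_k=\tiltheta_k+\theta^*$ and $\barG^{(i)}(\theta)=\barM^{(i)}\theta+\baru^{(i)}$ casts the last sum as $\alpha_{k+1}[\barG_{k+1}\tiltheta_k-\baru_{k+1}]$ with $\barG_{k+1}=\sum_{i=1}^m(-1)^i\alpha_{k+1}^{i-1}\barM^{(i)}$ and $\baru_{k+1}=\sum_{i=1}^m(-1)^{i+1}\alpha_{k+1}^{i-1}[\barM^{(i)}\theta^*+\baru^{(i)}]$, which converge to their $i=1$ terms as $\alpha_{k+1}\to0$, giving the first half of (iv).

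The remaining and most delicate point is the collection of the $\clT$-terms into the approximately telescoping pair $-\clT^{\sbullet}_{k+2}+\clR^{\sbullet}_{k+1}$. The obstruction is that each $\clT^{(i)}$ enters at both times $k+1$ and $k+2$ with the \emph{same} coefficient $(-\alpha_{k+1})^i$, which fails to match the coefficient $(-\alpha_{k+2})^i$ a genuine telescope would require at time $k+2$. I would set $\clT^{\sbullet}_{k+1}\eqdef\sum_{i=0}^m(-\alpha_{k+1})^i\clT^{(i)}_{k+1}$ and absorb the discrepancy by defining $\clR^{\sbullet}_{k+1}\eqdef\clT^{\sbullet}_{k+1}-\sum_{i=0}^m\big[(-\alpha_{k+1})^i-(-\alpha_{k+2})^i\big]\clT^{(i)}_{k+2}$, so that the $\clT$-terms equal $-\clT^{\sbullet}_{k+2}+\clR^{\sbullet}_{k+1}$ identically. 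The bound in (iii) then reduces to the elementary step-size estimate $|\alpha_{k+1}^i-\alpha_{k+2}^i|\le \rho i\,\alpha_{k+1}^i/(k+1)=O(\alpha_{k+1}/k)$ (mean value theorem, using $\alpha_{k+1}^{i-1}\le\alpha_0^{i-1}$ for $i\ge1$, the $i=0$ term vanishing) together with $\sup_k\|\clT^{(i)}_{k+2}\|_2\le\bdd{t:Met_Recursively}$; the identical estimate with $|\alpha_{k+1}-\alpha_k|=O(\alpha_k/k)$ yields $\|\alpha_{k+1}\barG_{k+1}-\alpha_k\barG_k\|_F\le\bdd{t:Met_Recursively}\alpha_k/k$ in (iv). Finally, for $m=0$ no unrolling occurs and \Cref{t:genG_Met_Decomp} applied directly to $G^{(0)}=f$ reproduces \eqref{e:DeltaDecomp} with $\MD^{(0)}=\MD$, $\clT^{(0)}=\clT$. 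I expect the coefficient-mismatch bookkeeping in the telescope, along with the inductive propagation of the growth bounds, to be the only genuine work; everything else is assembly from the two decomposition lemmas and the uniform moment bounds.
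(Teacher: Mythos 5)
Your proposal is correct and takes essentially the same route as the paper: unroll the recursive decomposition built from \Cref{t:genG_Met_Decomp,t:cor_genG_Met_Decomp} exactly $m$ times with the growth bound \eqref{e: Bounds_G} propagated inductively, collect terms by powers of $\alpha_{k+1}$ (your $\MD^\sbullet_{k+2}=\sum_{i=0}^{m-1}(-\alpha_{k+1})^i\clD^{(i)}_{k+2}+(-\alpha_{k+1})^m\MD^{(m)}_{k+2}$ is algebraically identical to the paper's $\sum_{i=0}^m\beta^i_{k+1}\clD^{(i)}_{k+2}-\beta^{m+1}_{k+1}\MD^{\Upupsilon^{(m)}}_{k+2}$, since $\clD^{(m)}_{k+2}=\MD^{(m)}_{k+2}-\alpha_{k+1}\MD^{\Upupsilon^{(m)}}_{k+2}$), and control the telescoping defect by the step-size estimate $|\alpha_{k+1}^i-\alpha_k^i|=O(\alpha_k/k)$, exactly as in the paper's proof. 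The only deviation is cosmetic bookkeeping: the paper absorbs the coefficient mismatch into $\clR^\sbullet_{k+1}$ on the time-$(k+1)$ terms (comparing $\alpha_{k+1}$ with $\alpha_k$), whereas you absorb it on the time-$(k+2)$ terms (comparing $\alpha_{k+1}$ with $\alpha_{k+2}$); both conventions satisfy \eqref{e:Deltadecomp_recur_intro} and the bounds in (i)--(iv).
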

\begin{proof}
Following the steps outlined after \eqref{e:Gdecomp-i}, we obtain $\eqref{e:Deltadecomp_recur_intro}$ in which, 
\begin{equation}
\begin{aligned} 
	\MD^\sbullet_{k+1}   &=  
	\sum^{m}_{i=0}\beta_{k+1}^{i}   \clW^{(i)}_{k+1} 
	\qquad \qquad   \qquad \qquad  
	\Upupsilon^{\sbullet}_{k+1}  =  G^{(m+1)}(\Psi_k)
	\\
	\alpha_{k+1} \barG_{k+1}\tiltheta_k  &=   \sum^{m}_{i=1} \beta_{k+1}^{i}  \barM^{(i)} \tiltheta_k
	\qquad   \qquad \qquad \qquad 
	\clT^\sbullet_{k+1}  = 
	\sum^{m}_{i=0}\beta_{k+1}^{i} \clT^{(i)}_{k+1} 
	\\
	\alpha_{k+1}\upbeta^\circ_{k+1} & =  -  \sum^{m}_{i=1} \beta_{k+1}^{i} [\baru^{i} + \barM^{(i)} \theta^*]
	\qquad \qquad \quad 
	\clR^\sbullet_{k}  = \clT^\sbullet_{k} +
	\sum^{m}_{i=0}(\beta_{k}^{i}  - \beta_{k-1}^{i})   \clT^{(i)}_{k}  
\end{aligned}
\label{e:Deltadecomp_recur_revisited}
\end{equation}
where $\beta^i_k = (-\alpha_{k})^i $ and $\barM^{(i)},\baru^{(i)}$ define $\barG^{(i)}(\theta) = \barM^{(i)} \theta + \baru^{(i)}$ for each $\theta \in \Re^d$ and $i$.

The moment bounds in parts (i)--(iii) are immediate from the bounds given as results of \Cref{t:genG_Met_Decomp}. The coupling between $\clT^\sbullet_k$ and $\clR^\sbullet_k$ in part (iii) is a consequence of the definitions in \eqref{e:Deltadecomp_recur_revisited} and the triangle inequality: for a constant $
\bdd{t:Met_Recursively}$ depending upon $\Psi_0$ and $\rho$,
\[
\| \clR^\sbullet_{k}  - \clT^\sbullet_{k} \|_2 \leq  \sum^{m}_{i=1} |\beta_{k}^{i}  - \beta_{k-1}^{i}|   \| \clT^{(i)}_{k}\|_2 
\leq  \bdd{t:Met_Recursively} N^{-1-\rho}
\]
The last bound holds since the term associated with $i=1$ dominates.

It remains to prove (iv). For each $\theta$, the following is immediate from the definitions in \eqref{e:Deltadecomp_recur_revisited}:
$
\lim_{k \to \infty} \barG_{k+1}[\theta - \theta^*] - \upbeta^\circ_{k+1}
=  - \barG^{(1)}(\theta)  
$.	
Moreover, by the triangle inequality,
\[
\| 	\alpha_{k+1} \barG_{k+1} - 	\alpha_{k} \barG_{k}\|_F \leq \sum^{m}_{i=1} |\beta_{k+1}^{i}  - \beta_{k}^{i}|   \| \barM^{(i)}\|_F \leq  \bdd{t:Met_Recursively} N^{-1-\rho}
\]
\end{proof}

Analogously to the definition of $\{\MD^*_{k}\}$ in \eqref{e:parameter_indep}, we adopt the notation $\{ \MD^{\sbullet*}_{k} , \clW^{(i)*}_{k}   \}$ to define parameter independent disturbance processes. Moreover, we denote 
\begin{equation}
H_{N+1} = \sum_{k=1}^N \MD^{\sbullet}_{k+1} \, , \quad H^*_{N+1} = \sum_{k=1}^N \MD^{\sbullet*}_{k+1}
\label{e:HN}
\end{equation}

We conclude this subsection by obtaining moment bounds on the terms defining $\{\MD^{\sbullet}_{k+1}\}$ in \eqref{e:Deltadecomp_recur_revisited}. %
\begin{lemma}
\label[lemma]{t:Markov_Deltarecur_bounds2}
Suppose that the assumptions of \Cref{t:Met_Recursively} hold. Then, if $m$ in \eqref{e:Deltadecomp_recur_intro} is chosen such that $m\rho>1$ for a fixed $\rho \in (0,1)$, there exists $\bdd{t:Markov_Deltarecur_bounds2}$ depending upon $\Psi_0$ and $\rho$ such that the following bounds hold, for $0 \leq i \leq   m$:
\begin{romannum}					
\item $\|\sum_{k=1}^N   \beta^i_{k+1}( \clW^{(i)}_{k+1} -  \clW^{(i)*}_{k+1} )\|_2  
\leq  \bdd{t:Markov_Deltarecur_bounds2} \max\{ N^{(1-[2i+1]\rho)/2} , \sqrt{\log(N)}\}$

\item $\|\sum_{k=1}^N   \beta^i_{k+1} \clW^{(i)*}_{k+1} \|_2 
\leq   \bdd{t:Markov_Deltarecur_bounds2} \max\{ N^{(1-i\rho)/2}, \sqrt{\log(N)} \}$


\end{romannum}	
\end{lemma}
\begin{proof}
Parts (i) and (ii) follow similarly to \Cref{t:Sum_delta_theta_bounds2}: using the martingale difference property, we have from \Cref{t:Delta_bdd}~(i) that there is a constant $\bdd{t:Markov_Deltarecur_bounds2}$ depending upon $\Psi_0$ and $\rho$ such that
\[
\begin{aligned}
\Big \|  \sum_{k=1}^N  \beta^i_{k+1}( \clW^{(i)}_{k+1} -  \clW^{(i)*}_{k+1} ) \Big\|^2_2  
&= 
\sum_{k=1}^N |\beta_{k+1}^{2i} | \|  \clW^{(i)}_{k+1} -  \clW^{(i)*}_{k+1}  \|^2_2 
\leq 
\sum_{k=1}^N \bdd{t:Markov_Deltarecur_bounds2} |\beta_{k+1}^{2i} |
\| \tiltheta_n \|^2_2  
\\
&\leq \sum_{k=1}^N \bdd{t:Markov_Deltarecur_bounds2}  \max\{ (k+1)^{-(2i+1)\rho} , (k+1)^{-1} \}
\\
\Big\|\sum_{k=1}^N  \beta_{k+1}^{i} 
\clW^{(i)*}_{k+1}  \Big\|^2_2   
&=
\sum_{k=1}^N  |\beta_{k+1}^{i}|^2 
\|\clW^{(i)*}_{k+1}  \|^2_2
\leq \bdd{t:Markov_Deltarecur_bounds2} \sum_{k=1}^N  |\beta_{k+1}^{2i}| 
\\
&\leq  \bdd{t:Markov_Deltarecur_bounds2} \sum_{k=1}^N \max\{ (k+1)^{-2i\rho } ,(k+1)^{-1}  \}
\end{aligned}
\]
\Cref{t:BigBounds} and \Cref{t:alpha_avg}~(ii) establish the final bounds for a potentially larger $\bdd{t:Markov_Deltarecur_bounds2}$.
\end{proof}

The martingales $\{ H_{N+1}  ,  H^*_{N+1} \}$  defined in  \eqref{e:HN} admit attractive bounds:

\begin{corollary}
\label[corollary]{t:Hsolidarity}
Under the assumptions of \Cref{t:Markov_Deltarecur_bounds2}, the following bounds hold for a constant $\bdd{t:Hsolidarity}$ depending upon $\Psi_0$ and $\rho$: 
\[
\| H_{N+1} - H_{N+1}^* \|_2 \leq \bdd{t:Hsolidarity} \max\{ N^{(1-\rho)/2} , \sqrt{\log(N)} \} 
\]
\qed
\end{corollary}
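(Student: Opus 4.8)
The plan is to reduce the statement entirely to \Cref{t:Markov_Deltarecur_bounds2} by the triangle inequality, using the explicit expression for $\MD^\sbullet_{k+2}$ recorded in \eqref{e:Deltadecomp_recur_revisited}. First I would expand the difference of the two martingales, invoking the definitions in \eqref{e:HN} and the decomposition of $\MD^\sbullet_{k+2}$:
\[
H_{N+2} - H^*_{N+2} = \sum_{k=1}^N \bigl(\MD^\sbullet_{k+2} - \MD^{\sbullet*}_{k+2}\bigr) = -\sum_{k=1}^N \beta_{k+1}^{m+1}\bigl(\MD^{\Upupsilon^{(m)}}_{k+2} - \MD^{\Upupsilon^{(m)*}}_{k+2}\bigr) + \sum_{i=0}^m \sum_{k=1}^N \beta_{k+1}^i\bigl(\clD^{(i)}_{k+2} - \clD^{(i)*}_{k+2}\bigr).
\]
This splits the quantity of interest into $m+2$ partial sums, each of which is exactly one of the objects bounded in \Cref{t:Markov_Deltarecur_bounds2}. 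The martingale-difference orthogonality and the underlying moment bounds from \Cref{t:Delta_bdd} are therefore already absorbed into that lemma, so no further probabilistic input is needed here.

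Next I would apply the triangle inequality in the $L_2$ norm and quote parts (i) and (iii) of \Cref{t:Markov_Deltarecur_bounds2}: the $i$-th term of the second sum is at most $\bdd{t:Markov_Deltarecur_bounds2}\max\{N^{(1-[2i+1]\rho)/2}, \sqrt{\log N}\}$, while the remainder built from $\MD^{\Upupsilon^{(m)}}$ is at most $\bdd{t:Markov_Deltarecur_bounds2}\,(N^{(1-[2m+2]\rho)/2}+1)$.

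The remaining task is to identify the dominant rate. Among the exponents $(1-[2i+1]\rho)/2$ for $0\le i\le m$, the largest is attained at $i=0$, giving exponent $(1-\rho)/2$; every term with $i\ge 1$ has a strictly smaller exponent and is hence dominated by $\max\{N^{(1-\rho)/2}, \sqrt{\log N}\}$ (absorbing the finitely many small-$N$ values into the constant). For the $\MD^{\Upupsilon^{(m)}}$ remainder I would invoke the standing hypothesis $m\rho>1$ carried over from \Cref{t:Markov_Deltarecur_bounds2}: it forces $[2m+2]\rho>2$, so that $(1-[2m+2]\rho)/2 < -\tfrac12 < 0$, whence $N^{(1-[2m+2]\rho)/2}\le 1$ and this whole contribution is a constant, again dominated by $\max\{N^{(1-\rho)/2}, \sqrt{\log N}\}$. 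Summing the $m+2$ contributions and collecting all constants into a single $\bdd{t:Hsolidarity}$ (depending on $\Psi_0$, $\rho$, and the fixed $m$) yields the claimed bound.

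I do not expect a genuine obstacle, since \Cref{t:Markov_Deltarecur_bounds2} does the heavy lifting and the corollary is a bookkeeping comparison of polynomial rates. The only point demanding care is verifying that the $i=0$ term is truly the slowest-decaying one and that the high-order $\MD^{\Upupsilon^{(m)}}$ tail is genuinely negligible — which is precisely what the choice $m\rho>1$ guarantees.
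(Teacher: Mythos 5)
Your proposal is correct and is exactly the argument the paper intends: the corollary is stated with no explicit proof precisely because it follows from expanding $H_{N+2}-H^*_{N+2}$ via \eqref{e:Deltadecomp_recur_revisited}, applying the triangle inequality, and quoting parts (i) and (iii) of \Cref{t:Markov_Deltarecur_bounds2}. Your identification of the $i=0$ term as dominant and your use of $m\rho>1$ to reduce the $\MD^{\Upupsilon^{(m)}}$ contribution to a constant are both the right bookkeeping steps.
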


\section{Asymptotic Statistics for Linear Stochastic Approximation}
\label{s:LinSA_proofs}
In the special case of linear SA, obtaining a representation for the target bias directly translates to a bias representation for PR averaged estimates because linearization of $\barf$ is no longer needed: $\barf(\theta) = A^*\tiltheta$. The next lemma is a version of \Cref{t:Targetbias_rep} for PR averaging.
\begin{lemma}
\label[lemma]{t:PRest_rep}
The following holds for the PR averaged estimate \eqref{e:linearSA},
\begin{equation}
A^*\tilthetaPR_N = 
\frac{1}{N} (S^\uptau_N   -  S^\Delta_{N+1} )
\label{e:PRest_almost_as}
\end{equation}
where 
$S^\uptau_N$ and $S^{\Delta}_N$ are defined exactly as in \Cref{t:Targetbias_rep}.
\qed
\end{lemma}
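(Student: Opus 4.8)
The plan is to reduce this directly to the sample-path target-bias identity of \Cref{t:Targetbias_rep}, exploiting the fact that linearity of $\barf$ eliminates the Taylor-series remainder that complicated the general nonlinear argument in the proof of \Cref{t:BigBounds}~(iii). The key observation is that for the linear recursion \eqref{e:linearSA} the mean vector field is \emph{exactly} affine: $\barf(\theta_k) = A^*(\theta_k - \theta^*) = A^* \tiltheta_k$, with no approximation error. This is precisely the linear special case of the Taylor expansion \eqref{eq:TS_approx} in which the second-order error $\clE^T_k$ vanishes identically.

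First I would average this exact identity over $k$. Since $A^*$ is a fixed matrix independent of $k$, it factors out of the sum, giving
\[
\frac{1}{N}\sum_{k=1}^N \barf(\theta_k) \;=\; A^*\,\frac{1}{N}\sum_{k=1}^N \tiltheta_k \;=\; A^*\,\Big(\frac{1}{N}\sum_{k=1}^N \theta_k - \theta^*\Big).
\]
Next, taking $N_0=1$ in the Polyak--Ruppert average \eqref{e:PR}, the empirical mean on the right is exactly $\thetaPR_N$, so the parenthesized quantity is $\tilthetaPR_N = \thetaPR_N - \theta^*$. Combining the two displays yields $A^* \tilthetaPR_N = \frac{1}{N}\sum_{k=1}^N \barf(\theta_k)$.

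Finally I would invoke \Cref{t:Targetbias_rep}, which asserts $\frac{1}{N}\sum_{k=1}^N \barf(\theta_k) = \frac{1}{N}(S^\uptau_N - S^\Delta_{N+1})$ with $S^\uptau_N$ and $S^\Delta_{N+1}$ defined via averaging the recursion \eqref{e:SA_recur} and applying the M\'etivier--Priouret decomposition of \Cref{t:Met_Pri_Exp}. Substituting this into the previous identity gives \eqref{e:PRest_almost_as} directly, with $S^\uptau_N$ and $S^\Delta_N$ unchanged from \Cref{t:Targetbias_rep} as claimed.

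There is essentially no analytic obstacle here: the entire content is the cancellation of the linearization error, which makes the averaged target bias \emph{equal} to $A^* \tilthetaPR_N$ rather than merely approximating it. The only point requiring mild care is the bookkeeping of the burn-in convention $N_0$; with $N_0 = 1$ the factor $1/N$ matches exactly, and any fixed $N_0 > 1$ alters the two sides only by a finite sum of bounded terms scaled by $1/N$, which is absorbed into the error analysis of the subsequent results rather than affecting this exact identity.
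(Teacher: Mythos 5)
Your proposal is correct and matches the paper's own (implicit) argument exactly: the paper states this lemma without a separate proof, explaining just before it that linearity makes $\barf(\theta)=A^*\tiltheta$, so averaging and invoking \Cref{t:Targetbias_rep} with the $N_0=1$ convention gives \eqref{e:PRest_almost_as} immediately. Your additional remark about the burn-in bookkeeping is a fair clarification but does not change the substance.
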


\subsection{Bias} 
\label{s:Bias_proofs}
We begin by analyzing bias within the AD noise setting.

\begin{lemma}
\label[lemma]{t:Lyapunov_eq_norm}
Suppose that the sequence $\{\thbias_n\} \subseteq \Re^d$ satisfies the recursion:
\begin{equation}
\thbias_{n+1} = (I+\alpha_{n+1} A^*) \thbias_n + \alpha_{n+1}\epsy_{n+1}
\label{e:GenRecurr_beta}
\end{equation}
where $A^*$ is Hurwitz, $\{\alpha_n\}$ is as defined by (A1) and $\{\epsy_{n+1}\}$ is a deterministic sequence.
Then, for a constant $\bdd{t:Lyapunov_eq_norm}$ and $n_b>0$ sufficiently large,
\[
\| \thbias_{n+1} \|_{\Lyapsol} \leq (1- \bdd{t:Lyapunov_eq_norm}\tfrac{1}{2}\alpha_{n+1})\| \thbias_{n} \|_{\Lyapsol}
+ \|\epsy_{n+1}\|_{\Lyapsol} \, , \quad n \geq n_b
\]
in which  $\Lyapsol>0$  is the unique positive definite matrix solving the Lyapunov equation, $[A^*]^\transpose \text{\Lyapsol} + \text{\Lyapsol} [A^*] = - I$.
\end{lemma}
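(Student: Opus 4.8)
The plan is to use the fact that, since $A^*$ is Hurwitz, the affine map $x\mapsto (I+\alpha A^*)x$ contracts in the $\Lyapsol$-weighted norm for all sufficiently small $\alpha>0$, and then to absorb the forcing term $\alpha_{n+1}\epsy_{n+1}$ by the triangle inequality. Because $\alpha_n=\alpha_0 n^{-\rho}\downarrow 0$ under (A1), the required smallness holds uniformly for $n\ge n_b$ once $n_b$ is taken large. The existence and positive definiteness of $\Lyapsol$ solving $[A^*]^\transpose\Lyapsol+\Lyapsol A^*=-I$ is the standard consequence of $A^*$ being Hurwitz, so I would take that as given.

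First I would bound the linear part. Writing $\alpha=\alpha_{n+1}$ and expanding the quadratic form, $\|(I+\alpha A^*)\thbias_n\|_{\Lyapsol}^2 = \thbias_n^\transpose\Lyapsol\thbias_n + \alpha\,\thbias_n^\transpose([A^*]^\transpose\Lyapsol+\Lyapsol A^*)\thbias_n + \alpha^2\thbias_n^\transpose[A^*]^\transpose\Lyapsol A^*\thbias_n$. The Lyapunov identity collapses the middle term to $-\alpha\|\thbias_n\|^2$, leaving $\|(I+\alpha A^*)\thbias_n\|_{\Lyapsol}^2 = \|\thbias_n\|_{\Lyapsol}^2 - \alpha\|\thbias_n\|^2 + \alpha^2\|A^*\thbias_n\|_{\Lyapsol}^2$. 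Using $\|A^*\thbias_n\|_{\Lyapsol}^2\le c_A\|\thbias_n\|^2$ with $c_A=\|[A^*]^\transpose\Lyapsol A^*\|$, for $\alpha\le 1/(2c_A)$ the $\alpha^2$ term is absorbed into half the $-\alpha$ term; the norm equivalence $\|\thbias_n\|^2\ge \lambda_{\max}(\Lyapsol)^{-1}\|\thbias_n\|_{\Lyapsol}^2$ then yields $\|(I+\alpha A^*)\thbias_n\|_{\Lyapsol}^2\le(1-\bdd{t:Lyapunov_eq_norm}\alpha)\|\thbias_n\|_{\Lyapsol}^2$ for the constant $\bdd{t:Lyapunov_eq_norm}=\tfrac12\lambda_{\max}(\Lyapsol)^{-1}>0$.

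Next I would take square roots and use $\sqrt{1-x}\le 1-\tfrac12 x$ on $[0,1]$ to obtain $\|(I+\alpha A^*)\thbias_n\|_{\Lyapsol}\le(1-\tfrac12\bdd{t:Lyapunov_eq_norm}\alpha_{n+1})\|\thbias_n\|_{\Lyapsol}$, which already carries the factor $\tfrac12$ of the claimed statement. Finally I would apply the triangle inequality in the $\Lyapsol$-norm directly to the recursion, $\|\thbias_{n+1}\|_{\Lyapsol}\le\|(I+\alpha A^*)\thbias_n\|_{\Lyapsol}+\alpha_{n+1}\|\epsy_{n+1}\|_{\Lyapsol}$, and invoke $\alpha_{n+1}\le 1$ to replace $\alpha_{n+1}\|\epsy_{n+1}\|_{\Lyapsol}$ by $\|\epsy_{n+1}\|_{\Lyapsol}$; this is exactly the asserted bound. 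The threshold $n_b$ is chosen so that $\alpha_{n+1}\le\min\{1,\,1/(2c_A)\}$ for all $n\ge n_b$, which is possible precisely because $\alpha_n\downarrow 0$.

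The only substantive point — and the sole place where the Hurwitz hypothesis enters — is the cancellation produced by the Lyapunov identity, which renders the $O(\alpha)$ term strictly negative definite and hence able to dominate the $O(\alpha^2)$ perturbation $\alpha^2\|A^*\thbias_n\|_{\Lyapsol}^2$. Everything else is norm equivalence, the elementary estimate on $\sqrt{1-x}$, and the triangle inequality, so I expect no real difficulty beyond the bookkeeping that ties $\bdd{t:Lyapunov_eq_norm}$ (defined at the squared level) to the factor $\tfrac12$ that appears after the square-root step, and the uniform choice of $n_b$.
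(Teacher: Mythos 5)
Your proposal is correct and follows essentially the same route as the paper's own proof: expand the quadratic form $\|(I+\alpha_{n+1}A^*)\thbias_n\|_{\Lyapsol}^2$, let the Lyapunov identity $[A^*]^\transpose\Lyapsol+\Lyapsol A^*=-I$ produce the strictly negative $O(\alpha_{n+1})$ term, absorb the $O(\alpha_{n+1}^2)$ perturbation using norm equivalence once $\alpha_{n+1}$ is small, take square roots via $\sqrt{1-\delta}\le 1-\tfrac12\delta$, and finish with the triangle inequality. The only differences are cosmetic (you order the triangle inequality after the contraction estimate and express the constants through $c_A$ and $\lambda_{\max}(\Lyapsol)$ rather than the extreme eigenvalues $\lambda_1,\lambda_d$ used in the paper), so there is nothing to correct.
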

\begin{proof}
The triangle inequality gives
\[
\| \thbias_{n+1} \|_{\Lyapsol} \leq \| (I+\alpha_{n+1} A^*) \thbias_n  \|_{\Lyapsol} +   \alpha_{n+1} \|\epsy_{n+1}\|_{\Lyapsol}	
\]
Letting $\lambda_1$ and $\lambda_d$ denote the eigenvalues of $\Lyapsol$ with maximal and minimal real part, respectively, we obtain the upper bound: for a constant $\bdd{t:Lyapunov_eq_norm}<\infty$,
\[
\begin{aligned}
\| (I+\alpha_{n+1} A^*) \thbias_{n} \|^2_{\Lyapsol}
&=\thbias^\transpose_{n} (I+\alpha_{n+1} A^*)^\transpose {\Lyapsol} (I+\alpha_{n+1} A^*) \thbias_n 
\\
& =	\| \thbias_{n} \|^2_{\Lyapsol}  - \alpha_{n+1} \| \beta_{n} \|^2_{\Lyapsol}
+ \alpha^2_{n+1} \|A^* \beta_{n} \|^2_{\Lyapsol}
\\
&\leq \| \thbias_{n} \|^2_{\Lyapsol} \Big(1 - \frac{1}{\lambda_1}\alpha_{n+1} + \alpha^2_{n+1} \bdd{t:Lyapunov_eq_norm} \frac{\lambda_1}{\lambda_d} \Big)
\end{aligned}
\]
Since ${\Lyapsol}$ is positive definite, an application of the bound $\sqrt{1-\delta} \leq 1 - \tfrac{1}{2}\delta$ with $\delta < \frac{1}{\lambda_1}$ completes the proof for $n\geq n_b$ with $n_b$ sufficiently large.
\end{proof}

\begin{proof}[Proof of part (i) of \Cref{t:stats_summary}]
Applying the identity $\barf(\theta) = A^*\tiltheta$ to \eqref{e:NoisyEuler}, taking expectations of both sides and rearranging terms gives
\[
\Expect[\tiltheta_{n+1}] = (I+\alpha_{n+1} A^*) \Expect[\tiltheta_n] + \alpha_{n+1} \Expect[\Delta_{n+1}]
\]
An application of \Cref{t:Lyapunov_eq_norm} with $\thbias_n =\Expect[\tiltheta_n] $ and $\epsy_n = \Expect[\Delta_{n}]$, yields the contraction: for $n_b>0$ sufficiently large
\[
\begin{aligned}
\clE_{n+1} 
&\leq (1 -  \bdd{t:Lyapunov_eq_norm}\tfrac{1}{2}\alpha_{n+1}) \clE_{n} + \alpha_{n+1} \clE^\Delta_n \, , \quad n\geq n_b
\\
\text{with }
\clE_n &\eqdef \| \thbias_{n} \|_{\Lyapsol}
\, , \quad 
\clE^\Delta_n \eqdef \| \epsy_n \|_{\Lyapsol}
\end{aligned}
\]
By induction, we obtain the following upper bound: for each $n\geq n_b$,
\[
\begin{aligned}
\clE_n \leq  \Xi_{n,n_b} \clE_{n_b} + \sum_{k=n_b}^n \alpha^{n-k}_k \Xi_{n,k+1} \clE^\Delta_n
\, , \quad  
\Xi_{n,k} \eqdef  \prod^{n}_{i=k}  (1-\bdd{t:Lyapunov_eq_norm}\tfrac{1}{2}\alpha_{i+1})  
\end{aligned}
\]
Applying the bound $(1+\delta) \leq \exp(\delta)$ to  the above identity  with $\delta <\Xi $  gives, for a constant  $\bdde{t:stats_summary}$,	
\[
\clE_n
\leq 
\clE_{n_b} \exp( - \bdde{t:stats_summary} (\tau^b_n -\tau^b_{n_b}  )) + 
\sum_{k=n_b}^n  \alpha^{n-k}_k  \exp( - \bdde{t:stats_summary} (\tau^b_n - \tau^b_{k+1}))   \clE^\Delta_n
\]
in which 
$ \displaystyle
\sum_{k=1}^n \alpha_k 
\leq 
\tau^b_n \eqdef 
\alpha_0 (1+(1-\rho)^{-1}[n^{1-\rho} - 1])
$.

Since the noise is additive, \eqref{e:DeltaDecomp} yields $\Expect[\Delta_{k}] = \Expect[-\clT_{k+1} + \clT_{k}]$, in which $\clT_k$ is only a function of $\Phi_k$. The drift condition (DV3) implies that $\clE^\Delta_n \to 0$ geometrically fast (see \Cref{t:Vuni+CLT}~(i)), completing the proof.
\end{proof}


When $f$ is affine in $\theta$, the solution $\haf$ to Poisson's equation \eqref{e:fish-h} with forcing function $f$ takes the form $\haf(\theta,\Phi) = \haA(\Phi)\theta+\hab(\Phi)$. Consequently, $\Upupsilon_k$ is affine in $\theta$ for each $k$:
\begin{subequations}
\begin{align}
\Upupsilon_{k+1} &= (A_{k+1}-\haA_{k+1}) (A_{k+1} \theta_k + b_{k+1})
\label{e:upsilon_lin}
\\
\Upupsilon^*_{k+1} &= (A_{k+1}-\haA_{k+1}) (A_{k+1} \theta^* + b_{k+1})
\label{e:upsilon_lin_star}
\end{align}
\label{e:upsilon_lin_def}
\end{subequations}
We note that the definition for $\Upupsilon^*$ in \eqref{e:upsilon_lin_star} is agrees with the general version in \eqref{e:Upstar}. Moreover, the expression in \eqref{e:upsilon_lin_star} is the same as the major contributor for bias in linear SA with constant step-size, that is, $\barUpupsilon^*$ in \eqref{e:biasCDC} \cite{laumey23a}.

\begin{proof}[Proof of part (ii) of \Cref{t:stats_summary}] 
Taking expectations of both sides of \eqref{e:PRest_almost_as}, we obtain $\Expect[\Delta_{k+1}] = \Expect[-\clT_{k+1} + \clT_{k} - \alpha_{k+1} \Upupsilon_{k+1}]$ from \eqref{e:DeltaDecomp}, yielding
\[
A^* \Expect[\tilthetaPR_N] = \frac{1}{N}\Big( \Expect[S^\uptau_N] + \Expect[\clT_{N+1} - \clT_{1} ] + \sum_{k=1}^N  \alpha_{k+1}  \Expect[ \Upupsilon_{k+1}] \Big)
\]
The remainder of the proof consists of following the same steps as in the proof of \Cref{t:BigBounds}~(iv) to obtain the companion to \eqref{e:targetbias_rate}:
\begin{equation}
\Expect[\tilthetaPR_{N}]   =  \alpha_{N+1} \upbeta_\uptheta + \epsy_N^\upbeta 
\, ,  
\quad  
\| \epsy_N^\upbeta  \|    \le   \begin{cases}
	b^\circ N^{-3\rho/2} \quad &  \rho \leq 1/2
	\\
	b^\circ	N^{\rho/2-1} \quad &  \rho >1/2
\end{cases}
\label{e:SecondMomentBddLinearSA_bias}
\end{equation}	
with $  \upbeta_\uptheta = (1-\rho)^{-1} [A^*]^{-1} \barUpupsilon^* $.  Dividing both sides of the above equation by $\alpha_{N+1}$ and taking limits completes the proof for $\rho \in (0,2/3)$.
\end{proof}

\subsection{Asymptotic Covariance}
\label{s:Cov_proofs}
The next lemma will prove itself useful in establishing asymptotic covariances for PR averaging,

\begin{lemma}
\label[lemma]{t:Cov_lemma}
Suppose	that $\{\clV_N\}$ and $\{\clE_N\}$ are sequences of random variables such that $\| \clV_N\|_2^2 <\infty$ and $\| \clE_N\|_2^2<\infty$. Then, the following approximation holds
\[
\begin{aligned}
\Cov(\clV_N + \clE_N) &= \Cov(\clV_N) + \Sigma^\epsy_N
\\
\text{where} \quad
\Sigma^\epsy_N &\leq \epsy_N I 
\quad 
\text{with} \quad  
\epsy_N  = 2 \| \clV_N\|_{2,s} \| \clE_N \|_{2,s} + \|  \clE_N  \|_{2,s}^2
\end{aligned}
\]
\qed
\end{lemma}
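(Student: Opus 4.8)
The plan is to reduce the claim to the bilinearity of $\Cov$ together with an elementary quadratic-form estimate. Writing the centered variables $\widetilde{\clV}_N = \clV_N - \Expect[\clV_N]$ and $\widetilde{\clE}_N = \clE_N - \Expect[\clE_N]$, bilinearity of the covariance gives $\Cov(\clV_N + \clE_N) = \Cov(\clV_N) + \Sigma^\epsy_N$ with
\[
\Sigma^\epsy_N = \Cov(\clE_N) + \Expect[\widetilde{\clV}_N \widetilde{\clE}_N^\transpose] + \Expect[\widetilde{\clE}_N \widetilde{\clV}_N^\transpose].
\]
The last two matrices are transposes of one another and $\Cov(\clE_N)$ is symmetric, so $\Sigma^\epsy_N$ is symmetric; hence the assertion $\Sigma^\epsy_N \le \epsy_N I$ is a genuine inequality between symmetric matrices, and it suffices to bound the quadratic form $x^\transpose \Sigma^\epsy_N x$ uniformly over unit vectors $x \in \Re^d$.

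For such an $x$ I would expand
\[
x^\transpose \Sigma^\epsy_N x = \Expect\big[(x^\transpose \widetilde{\clE}_N)^2\big] + 2\,\Expect\big[(x^\transpose \widetilde{\clV}_N)(x^\transpose \widetilde{\clE}_N)\big],
\]
and control the cross term by the scalar Cauchy--Schwarz inequality, $\big|\Expect[(x^\transpose \widetilde{\clV}_N)(x^\transpose \widetilde{\clE}_N)]\big| \le \big(\Expect[(x^\transpose \widetilde{\clV}_N)^2]\big)^{1/2}\big(\Expect[(x^\transpose \widetilde{\clE}_N)^2]\big)^{1/2}$. The two remaining ingredients are the elementary bounds $\Expect[(x^\transpose \widetilde{\clV}_N)^2] = x^\transpose \Cov(\clV_N) x \le \trace(\Cov(\clV_N)) = \|\clV_N\|_{2,s}^2$---valid for every unit $x$ because the positive semidefinite matrix $\Cov(\clV_N)$ has largest eigenvalue no greater than its trace---and, identically, $\Expect[(x^\transpose \widetilde{\clE}_N)^2] \le \|\clE_N\|_{2,s}^2$, where I invoke the identity \eqref{e:spandef}. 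Substituting these three estimates gives $x^\transpose \Sigma^\epsy_N x \le \|\clE_N\|_{2,s}^2 + 2\|\clV_N\|_{2,s}\|\clE_N\|_{2,s} = \epsy_N$, which is precisely the claimed bound after taking the supremum over unit $x$.

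There is no genuinely hard step: the lemma merely packages bilinearity of $\Cov$, a single application of Cauchy--Schwarz, and the eigenvalue/trace comparison for positive semidefinite matrices. The hypotheses $\|\clV_N\|_2^2 < \infty$ and $\|\clE_N\|_2^2 < \infty$ ensure that every expectation written above is finite, so that Cauchy--Schwarz applies without qualification. The only place deserving a moment's care is the passage from the scalar Rayleigh-quotient bound to the matricial inequality $\Sigma^\epsy_N \le \epsy_N I$; this is legitimate exactly because $\Sigma^\epsy_N$ was first shown to be symmetric, for which a uniform bound on the Rayleigh quotient over the unit sphere is equivalent to the stated bound on the largest eigenvalue.
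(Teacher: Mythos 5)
Your proposal is correct. The paper gives no proof of this lemma at all (it is stated as immediate, with the end-of-proof box in the statement itself), and your argument---bilinearity of covariance to identify $\Sigma^\epsy_N = \Cov(\clE_N) + \Expect[\widetilde{\clV}_N \widetilde{\clE}_N^\transpose] + \Expect[\widetilde{\clE}_N \widetilde{\clV}_N^\transpose]$, Cauchy--Schwarz on the cross term, and the eigenvalue--trace comparison for positive semidefinite matrices combined with the identity \eqref{e:spandef}---is exactly the routine expansion the authors evidently had in mind, so there is nothing to add.
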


\begin{proof}[Proof of part (iii) of \Cref{t:stats_summary} for the AD settting]
Adding and subtracting $\frac{1}{N}M^*_{N+1}$ to the right side of \eqref{e:PRest_almost_as} yields
\begin{subequations}
\begin{equation}
	\begin{aligned}
		\tilthetaPR_N &=\clV_N + \clE_N
		\\
		\text{in which }
		\clV_N &= [A^*]^{-1}\frac{1}{N} M^*_{N+1}  
		\\
		\clE_N &=  [A^*]^{-1}\frac{1}{N}  (S^\uptau_N   - S^\Delta_{N+1} - M^*_{N+1})
	\end{aligned}
	\label{e:PR_rep_preMSE}
\end{equation}
The expression \eqref{e:Sum_Delta_GenSA} gives $S^\Delta_{N+1}  = M_{N+1} - \clT_{N+1} + \clT_{1}$ for the additive noise setting. \Cref{t:Sum_delta_theta_bounds,t:Sum_delta_theta_bounds2} result in the following upper bounds: for a constant $b$ depending upon $\Psi_0$ and $\rho$, $\| \clV_N \|_2 \leq b  N^{-1/2}$ and
\begin{equation}
	\begin{aligned}
		\| \clE_N \|_2 
		&\leq 
		\| [A^*]^{-1} \|_F  \frac{1}{N}  (  \|  S^\uptau_N  \|_2 +  \| M_{N+1} - M^*_{N+1} \|_2 + \|\clT_{N+1} - \clT_{1} \|_2   )
		\\
		&\leq b  \max\{ N^{-(1+\rho)/2} , N^{-(2-\rho)/2}   \}
	\end{aligned}
\end{equation}
\nonumber
\end{subequations}
Consequently, the $L_2$ norm of $\clE_N $ is dominated by $\|S^\uptau_N  \|_2$ for $\rho >1/2$ and by $\| M_{N+1} - M^*_{N+1} \|_2$ when $\rho<1/2$. 

In view of these bounds and the upper bound after \eqref{e:L2andSpan}, we take covariances of both sides of \eqref{e:PR_rep_preMSE} and apply \Cref{t:Cov_lemma} to obtain
\[\begin{aligned}
\Cov(\tilthetaPR_N) &= \Cov(\clV_N + \clE_N) \leq  \Cov(\clV_N) + \epsy_N  I 
\, , \\  
\epsy_N &\leq  \bdd{t:stats_summary} \max\{ N^{-(2+\rho)/2},N^{-(3-\rho)/2}\}
\end{aligned}
\]
where $ \bdd{t:stats_summary}  = 3b^2$.
By definition, $M^*_{N+1} = \sum_{i=1}^N \MD^*_{k+1}$. Applying the martingale difference property, \eqref{e:AsymptCLT_phi} gives
\begin{equation}
\lim_{N \to \infty} N  \Cov(\tilthetaPR_N) 
=
\lim_{N \to \infty} N \Cov(\clV_N) 
= [A^*]^{-1}  \Sigma_{\MD^*} [{A^*}^\transpose]^{-1}   
\label{e:MD_AsympCov}
\end{equation}
\end{proof}

In view of \Cref{t:PRest_rep} and \eqref{e:Deltadecomp_recur_intro}, the following representation for PR-averaged estimates is obtained:
\begin{proposition}
\label[proposition]{t:PR_stoch_process}
Under the assumptions of \Cref{t:Met_Recursively}, the following holds:
\begin{equation}
A^* \tilthetaPR_{N} = -\frac{1}{N} S^{\clA}_N  - \frac{1}{N} H_{N+1}     - \frac{1}{N} J_{N+1}         +  \frac{1}{N} S^\circ_N 
\label{e:PR_stoch_process}
\end{equation}
where $\{S^\circ_N\}$ is a deterministic sequence, 
\begin{romannum}
\item $\{H_{N+1}\}$ is a martingale satisfying $\displaystyle \lim_{N \to \infty} \frac{1}{N} \Cov(H_{N+1}) = \Sigma_{\MD^*}$

\item
For a constant $\bdd{t:PR_stoch_process}$ depending upon $\Psi_0$ and $\rho$,  and $N\ge 1$,
\[ 
\| S^\clA_N \|_{2,s}
\leq 
\bdd{t:PR_stoch_process} \big[  1  + N^{1-\rho} \| \tilthetaPR_N \|_{2,s} +    \sum_{k = 1}^{N-1} k^{-\rho} \| \tilthetaPR_{k} \|_{2,s}  \big]
\]

\item
If in addition, $m = m(\rho)$ in  \eqref{e:Deltadecomp_recur_intro} is chosen so that $m\rho>1$, then for $N\ge 1$,
%
$ \displaystyle	 \| J_{N+1}   \|_2 \leq \bdd{t:PR_stoch_process} N^{\rho/2} $

\end{romannum}
\end{proposition}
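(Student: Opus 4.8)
The plan is to start from \Cref{t:PRest_rep}, which gives $A^*\tilthetaPR_N = \tfrac{1}{N}(S^\uptau_N - S^\Delta_{N+1})$, and to substitute the refined decomposition \eqref{e:Deltadecomp_recur_intro} into $S^\Delta_{N+1}=\sum_{k=1}^N\Delta_{k+1}$, sorting the resulting terms into the four groups named in \eqref{e:PR_stoch_process}. Concretely I would take $H_{N+2}=\sum_{k=1}^N\MD^\sbullet_{k+2}$ exactly as in \eqref{e:HN}, collect the deterministic pieces into $S^u_N=\sum_{k=1}^N\alpha_{k+1}\baru_{k+1}$, assign the feedback term to $S^\clA_N=\sum_{k=1}^N\alpha_{k+1}\barG_{k+1}\tiltheta_k$, and let $J_{N+2}$ absorb the remaining stochastic pieces, namely $-S^\uptau_N$ together with $\sum_{k=1}^N[(-\alpha_{k+1})^{m+1}\Upupsilon^\sbullet_{k+2}-\clT^\sbullet_{k+2}+\clR^\sbullet_{k+1}]$. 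A direct sign check then reproduces \eqref{e:PR_stoch_process}, and it remains to establish the three stated bounds.

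For part (iii) I would bound $J_{N+2}$ by three elementary estimates. \Cref{t:Sum_delta_theta_bounds}~(i) gives $\|S^\uptau_N\|_2\le \bdd{t:PR_stoch_process} N^{\rho/2}$, the dominant term. The series $\sum_k(-\alpha_{k+1})^{m+1}\Upupsilon^\sbullet_{k+2}$ is handled by the triangle inequality and \Cref{t:Met_Recursively}~(i): since $m\rho>1$ we have $\sum_k\alpha_{k+1}^{m+1}<\infty$, so this contribution is $O(1)$. Finally, writing $-\clT^\sbullet_{k+2}+\clR^\sbullet_{k+1}=(\clT^\sbullet_{k+1}-\clT^\sbullet_{k+2})+(\clR^\sbullet_{k+1}-\clT^\sbullet_{k+1})$ exposes a genuine telescope bounded by $2\sup_k\|\clT^\sbullet_{k+1}\|_2$, plus a remainder controlled by $\sum_k\alpha_{k+1}/k<\infty$, both via \Cref{t:Met_Recursively}~(iii). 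Summing yields $\|J_{N+2}\|_2\le \bdd{t:PR_stoch_process} N^{\rho/2}$.

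For part (ii) the key device is summation by parts (\Cref{t:Sum_parts}) applied to $S^\clA_N=\sum_{k=1}^N\alpha_{k+1}\barG_{k+1}\tiltheta_k$, using that the partial sums $\sum_{j\le k}\tiltheta_j$ equal $(k-N_0+1)\tilthetaPR_k$ up to a fixed boundary vector, to convert increments of $\tiltheta$ into the averaged error $\tilthetaPR$. The boundary term produces $\alpha_{N+2}\barG_{N+2}\,N\tilthetaPR_N$, bounded in span norm by $\bdd{t:PR_stoch_process} N^{1-\rho}\|\tilthetaPR_N\|_{2,s}$ using uniform boundedness of $\{\barG_k\}$; the summed term uses the increment estimate $\|\alpha_{k+2}\barG_{k+2}-\alpha_{k+1}\barG_{k+1}\|_F\le \bdd{t:PR_stoch_process}\alpha_{k+1}/k$ from \Cref{t:Met_Recursively}~(iv), so that the factor $k$ from the partial sum cancels the $1/k$ and each summand is at most $\bdd{t:PR_stoch_process}k^{-\rho}\|\tilthetaPR_k\|_{2,s}$. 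The constant $1$ in the bound absorbs the finite burn-in boundary contributions.

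The main obstacle is part (i), the covariance limit. I would first invoke \Cref{t:Hsolidarity} to show $\|H_{N+2}-H^*_{N+2}\|_2=o(\sqrt N)$, and then apply \Cref{t:Cov_lemma} to conclude that $\tfrac1N\Cov(H_{N+2})$ and $\tfrac1N\Cov(H^*_{N+2})$ share the same limit. It then remains to evaluate $\lim_N\tfrac1N\Cov(H^*_{N+2})$. Since $H^*_{N+2}$ is a martingale, $\tfrac1N\Cov(H^*_{N+2})=\tfrac1N\sum_{k=1}^N\Cov(\MD^{\sbullet*}_{k+2})$, and inspection of \eqref{e:Deltadecomp_recur_revisited} shows $\MD^{\sbullet*}_{k+2}=\MD^*_{k+2}+O(\alpha_{k+1})$ in $L_2$, since every term beyond the leading $i=0$ piece carries a factor $\beta_{k+1}^{i}=(-\alpha_{k+1})^{i}$ with $i\ge 1$. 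Hence the Cesàro average of the per-step covariances converges to the steady-state covariance $\Sigma_{\MD^*}$ of $\{\MD^*_k\}$ defined after \eqref{e:Cov_lowerbound_def}. The delicate points are controlling, uniformly in $k$, the cross-covariance terms generated by the many $\alpha$-weighted corrections, and justifying passage to the stationary limit; for these the uniform $L_p$ moment bounds of \Cref{t:Delta_bdd} and the $v$-uniform ergodicity of \Cref{t:Vuni+CLT} are the essential tools.
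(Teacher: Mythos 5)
Your proposal is correct and follows essentially the same route as the paper's proof: the identical assignment of terms $S^{\clA}_N$, $H_{N+2}$, $J_{N+2}$, $S^u_N$ after substituting \eqref{e:Deltadecomp_recur_intro} into \Cref{t:PRest_rep}, summation by parts together with \Cref{t:Met_Recursively}~(iv) for part (ii), and the telescoping-plus-summability bounds (using $m\rho>1$ and \Cref{t:Sum_delta_theta_bounds}~(i)) for part (iii). The only, immaterial, difference is in part (i): the paper reduces $\tfrac{1}{N}H_{N+2}$ to $\tfrac{1}{N}M^*_{N+2}$ in two stages (an error term $\clE^1_N$ controlled by \Cref{t:Hsolidarity} and a term $\clE^2_N$ controlled by martingale orthogonality) and then invokes \eqref{e:AsymptCLT_phi}, whereas you absorb the $\alpha$-weighted corrections inside $\MD^{\sbullet*}_{k+2}$ per step as $O(\alpha_{k+1})$ in $L_2$ and pass to a Ces\`aro limit --- both arguments rest on the same lemmas and yield the same conclusion.
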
	

\begin{proof}
Summing \eqref{e:Deltadecomp_recur_intro} from $k=1$ to $N$ and substituting it into \eqref{e:PRest_almost_as} gives \eqref{e:PR_stoch_process} in which
\[
\begin{aligned}
S^{\clA}_N &= \sum_{k=1}^N \alpha_{k+1} \barG_{k+1} \tiltheta_k 
\, , \qquad \qquad   
S^\circ_N = \sum_{k=1}^N \alpha_{k+1}\upbeta^\circ_{k+1} 
\\
J_{N+1}  &=  - \clT^{\sbullet}_{N+1} +  \clT^{\sbullet}_{1} -S^\tau_N+ \sum_{k=1}^N [(\clR^\sbullet_{k}  - \clT^{\sbullet}_{k}) + (-\alpha_{k+1})^{m+1} \Upupsilon^{\sbullet}_{k+1}]
\end{aligned}
\]

We proceed with the proof of part (i). We have  $\MD^{(0)*}_{k+1} = \MD^*_{k+1}$ from \Cref{t:Met_Recursively},  
which along with  \eqref{e:Deltadecomp_recur_intro} and the definition of $\{H_{k}\}$ before \Cref{t:Markov_Deltarecur_bounds2} 
gives
the identity $\frac{1}{N} H_{N+1}  = \clV_N + \clE_N$, with $	\clV_N = \frac{1}{N} M^*_{N+1}$ and  $		\clE_N = \clE^1_N   + \clE^2_N$ where
\[
\begin{aligned}
\clE^1_N &=  \frac{1}{N} (H_{N+1} - H^{*}_{N+1})
\\
\clE^2_N &=
\frac{1}{N}  \sum_{k=1}^N \Big[ \sum_{i=1}^m \clW^{(i)*}_{k+1} 
\Big]
\end{aligned}
\]
Moment bounds on $\clV_N$ are obtained exactly as it was done in the proof of \Cref{t:stats_summary}~(iii) for the AD setting, while bounds on $\clE^1_N $ follow from \Cref{t:PR_stoch_process}~(i). They are of the form $\| \clV_N \|_2 \leq b N^{-1/2}$ and $\| \clE^1_N \|_2 \leq b \max\{N^{-(1+\rho)/2}, \sqrt{\log(N)}/N \}$, in which $b$ is a constant depending upon $\Psi_0$ and $\rho$.

Moreover,  similar arguments as the ones in \Cref{t:Markov_Deltarecur_bounds2}~(ii) and (iv) yield the following, for a potentially larger $b$,
\[
\begin{aligned}
\Big\| \sum_{k=1}^N \sum_{i=1}^m \clW^{(i)*}_{k+1}   \Big\|_2  & \leq b \max\{N^{(1-\rho)/2}, \sqrt{\log(N)} \}		
\end{aligned}
\]
where the first bound holds since the term associated with $i=1$ dominates. This implies $\|\clE^2_N  \|_2 \leq  3b \max\{N^{-(1+\rho)/2}, \sqrt{\log(N)}/N \}$.

Then, \Cref{t:Cov_lemma} and the upper bound after \eqref{e:L2andSpan} yield
\[\begin{aligned}
\Cov\Big( \frac{1}{N} H_{N+1} \Big) &=  \Cov\Big(\clV_N + \clE_N \Big)  \leq  \Cov(\clV_N) + \epsy_N  I 
\, , \\  
\epsy_N &\leq  \bdd{t:stats_summary} \max\{ N^{-1-\rho}, \log(N)/N^2\}
\end{aligned}
\]
where $\bdd{t:stats_summary} = 15b^2$.
Multiplying both sides of the above equation by $N$ and taking limits completes the proof for part (i):
\[
\lim_{N \to \infty} \frac{1}{N} \Cov(H_{N+1}) 
=
\lim_{N \to \infty} N \Cov(\clV_N) 
=  \Sigma_{\MD^*}
\]

We now turn to the proof of part (ii). 
Writing $\tiltheta_k = k\tilthetaPR_k - (k-1)\tilthetaPR_{k-1}$ gives
\[	\begin{aligned}
S^\clA_N
&=  \sum_{k=1}^N  \alpha_{k+1} \barG_{k+1} [k\tilthetaPR_k - (k-1)\tilthetaPR_{k-1}]
\\
&=    \alpha_{N+1} \barG_{N+1} N\tilthetaPR_N - \alpha_{2} \barG_{2} \tilthetaPR_1  - \sum_{k = 1}^{N-1} ( \alpha_{k+1} \barG_{k+1} - \alpha_{k} \barG_{k}) k\tilthetaPR_{k}
\end{aligned}
\]
where the last equality was obtained through \Cref{t:Sum_parts}.

Taking $L_2$ span norms of both sides of the above equation and using the triangle inequality 	
\[
\begin{aligned}
\|S^\clA_N\|_{2,s}  
&\leq 
|\alpha_{N+1} | \|  \barG_{N+1}  \|_F 
N \|    \tilthetaPR_N  \|_{2,s}  
+ |\alpha_{2} |\|  \barG_{2}  \|_F  \|   \tilthetaPR_1  \|_{2,s} 
\\
&+ \sum_{k=1}^{N-1} \|\alpha_{k+1}  \barG_{k+1}  -  \alpha_k  \barG_{k}\|_F  k \| \tilthetaPR_{k} \|_{2,s}  
\end{aligned}
\]  
This and     \Cref{t:Met_Recursively}~(iv) completes the proof of (ii).

Part (iii) follows from the definitions of $\{H_{N+1},J_{N+1}\}$ along with the bounds in \Cref{t:Met_Recursively} and \Cref{t:Sum_delta_theta_bounds,t:Hsolidarity}.

\end{proof}

The representation \eqref{e:PR_stoch_process} in \Cref{t:PR_stoch_process} is equivalent to \eqref{e:biasVariance}, in which $\clZ_n \eqdef -\sqrt{N}[A^*]^{-1}(H_{N+1} + J_{N+1}  +  S^\clA_N )$ and   $\upbeta_N \eqdef (N \alpha_{N+1})^{-1} S^\circ_N$.     

The representation  \eqref{e:PR_stoch_process} is particularly useful in the setting of linear SA since it results in tighter bounds for the $L_2$ span norm of $\thetaPR_N$ when compared to \Cref{t:BigBounds}~(iii). 
It will be clear that the asymptotic covariance of the right hand side of \eqref{e:PR_stoch_process} is dominated by $\{H_{N+1}\}$ and
equals $\Sigma_{\MD^*}$.  

Bounds on $\| \tilthetaPR_{N}\|_{2,s}$ are obtained next for the general nonlinear SA algorithm.


\begin{lemma}
\label[lemma]{t:PR_vanishingfast}
Suppose that the assumptions of \Cref{t:Met_Recursively} hold.
Then,
$
\| \tilthetaPR_N \|_{2,s}  \leq \bdd{t:PR_vanishingfast} N^{-1/2}
$,   $N\ge 1$, 
for a constant $\bdd{t:PR_vanishingfast}$ depending upon $\Psi_0$ and $\rho$.

\end{lemma}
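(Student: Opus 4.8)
The plan is to extract a self-referential recursive inequality for the span norm $\varsigma_N \eqdef \| \tilthetaPR_N \|_{2,s}$ directly from the representation \eqref{e:PR_stoch_process} of \Cref{t:PR_stoch_process}, and then close it with a discrete Grönwall argument. First I would take $L_2$ span norms of both sides of \eqref{e:PR_stoch_process}. The deterministic term $S^u_N$ contributes nothing to the span seminorm, and $A^*$ is invertible (being Hurwitz under (A5)), so the triangle inequality for $\|\cdot\|_{2,s}$ gives $\varsigma_N \le \| [A^*]^{-1}\|_F \big( \tfrac1N\|S^\clA_N\|_{2,s} + \tfrac1N\|H_{N+2}\|_{2,s} + \tfrac1N\|J_{N+2}\|_{2,s}\big)$.

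Next I would estimate the three stochastic terms. Part (i) of \Cref{t:PR_stoch_process} gives $\tfrac1N\Cov(H_{N+2})\to\Sigma_{\MD^*}$; a convergent sequence is bounded, so $\|H_{N+2}\|_{2,s}=\sqrt{\trace\Cov(H_{N+2})}=O(\sqrt N)$ and the $H$-term is $O(N^{-1/2})$. Part (iii) gives $\|J_{N+2}\|_2\le \bdd{t:PR_stoch_process}N^{\rho/2}$, and since $\|J_{N+2}\|_{2,s}\le\|J_{N+2}\|_2$ the $J$-term is $O(N^{\rho/2-1})$, which is $o(N^{-1/2})$ for every $\rho<1$. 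Finally part (ii) bounds $\|S^\clA_N\|_{2,s}$ by $\bdd{t:PR_stoch_process}[1+N^{1-\rho}\varsigma_N+\sum_{k=1}^{N-1}k^{-\rho}\varsigma_k]$. Collecting these, there are constants $c_0,c_1$ (depending on $\Psi_0,\rho$) with $\varsigma_N \le c_0 N^{-1/2}+c_1 N^{-\rho}\varsigma_N+\tfrac{c_1}N\sum_{k=1}^{N-1}k^{-\rho}\varsigma_k$. The self-referential term $c_1N^{-\rho}\varsigma_N$ is absorbed for $N\ge N_1$ large enough that $c_1N^{-\rho}\le\tfrac12$, yielding $\varsigma_N\le 2c_0N^{-1/2}+\tfrac{2c_1}N\sum_{k=1}^{N-1}k^{-\rho}\varsigma_k$ for $N \ge N_1$.

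The remaining step is to solve this inequality by induction on the ansatz $\varsigma_N\le B N^{-1/2}$. Feeding the hypothesis into the sum gives $\tfrac1N\sum_{k=1}^{N-1}k^{-\rho}\varsigma_k\le B N^{-1/2}\delta_N$, where $\delta_N\eqdef N^{-1/2}\sum_{k=1}^{N-1}k^{-\rho-1/2}$. The key observation is that $\delta_N\to0$ in all three regimes of $\rho$: the partial sums are bounded when $\rho>1/2$, are $O(\log N)$ when $\rho=1/2$, and are $O(N^{1/2-\rho})$ when $\rho<1/2$, each of which, divided by $N^{1/2}$, tends to $0$. Hence for $N$ beyond some $N_2\ge N_1$ one has $2c_1\delta_N\le\tfrac12$, and the induction step gives $\varsigma_N\le N^{-1/2}(2c_0+B/2)$, which closes provided $B\ge 4c_0$. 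Choosing $B=\max\{4c_0,\ \max_{k\le N_2}k^{1/2}\varsigma_k\}$—finite because each $\varsigma_k\le\|\tilthetaPR_k\|_2<\infty$ by the $L_2$ bound \eqref{e:PR_MSEBound} of \Cref{t:BigBounds}—covers the finitely many base cases and yields $\varsigma_N\le\bdd{t:PR_vanishingfast}N^{-1/2}$ for all $N\ge1$.

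The main obstacle is precisely this Grönwall step: the bound on $\|S^\clA_N\|_{2,s}$ is self-referential in $\varsigma_N$ and couples the entire history through $\sum_{k}k^{-\rho}\varsigma_k$, so the rate cannot simply be read off from the representation. The crucial quantitative point that makes the induction close is that $\sum_{k=1}^{N-1}k^{-\rho-1/2}$ grows strictly more slowly than $N^{1/2}$ for \emph{every} $\rho\in(0,1)$ (equivalently $\delta_N\to0$), so that the feedback from $S^\clA_N$ represents a vanishing fraction of the target rate $N^{-1/2}$ rather than a contribution of comparable order. Extra care is warranted only at the borderline $\rho=1/2$, where a logarithmic factor appears in the partial sums but is still annihilated by the $N^{-1/2}$ prefactor.
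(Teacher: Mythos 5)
Your proof is correct, and while it shares the paper's starting point it closes the argument by a genuinely different mechanism. Like the paper, you take $L_2$ span norms in the representation \eqref{e:PR_stoch_process} and invoke parts (i)--(iii) of \Cref{t:PR_stoch_process} to reach the self-referential bound $\varsigma_N \le c_0 N^{-1/2} + c_1 N^{-\rho}\varsigma_N + \tfrac{c_1}{N}\sum_{k<N} k^{-\rho}\varsigma_k$ with $\varsigma_N \eqdef \|\tilthetaPR_N\|_{2,s}$. The paper closes this by a bootstrap over rates: it first settles $\rho\ge 1/2$ directly from \Cref{t:BigBounds}~(iii) (since then $\alpha_N\le \mathrm{const}\cdot N^{-1/2}$), and for $\rho<1/2$ it seeds the recursion with the established rate $\varsigma_k\le \bdd{t:BigBounds}\alpha_k$, obtaining a bound of order $\alpha_N^2 + N^{-1/2}$ (settling $\rho\in[1/4,1/2)$), then re-inserts the improved rate to get $\alpha_N^3+N^{-1/2}$ (settling $\rho\in[1/6,1/4)$), and so on, with the number of passes growing like $1/\rho$. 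You instead run a single strong induction on $N$ with the target ansatz $BN^{-1/2}$, absorbing the $c_1N^{-\rho}\varsigma_N$ term for $N$ large and exploiting that $\delta_N=N^{-1/2}\sum_{k<N}k^{-\rho-1/2}\to 0$ for \emph{every} $\rho\in(0,1)$, including the logarithmic borderline $\rho=1/2$. What your route buys: it is uniform in $\rho$ (no case split, no $\rho$-dependent iteration count), and it uses \Cref{t:BigBounds} only to guarantee finiteness of the finitely many base-case values $\varsigma_k$, not for its rate. What the paper's route buys: the intermediate bounds $\varsigma_N=O(\alpha_N^{j})$ generated along the way are themselves informative, and each pass yields a bound valid for all $N\ge1$ without managing induction thresholds $N_1,N_2$ or base-case constants. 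One bookkeeping item you should make explicit (as the paper does): invoking \Cref{t:PR_stoch_process}~(iii) presupposes that $m$ in \eqref{e:Deltadecomp_recur_intro} is chosen with $m\rho>1$; since $m$ is free in \Cref{t:Met_Recursively}, this costs nothing, but it should be stated.
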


\begin{proof}
Part (iii) of \Cref{t:BigBounds} and the inequality after \eqref{e:L2andSpan} give 
$\| \tilthetaPR_{N} \|_{2,s} \leq \bdd{t:BigBounds} \max\{\alpha_N,N^{-1/2}\}$, which completes the proof for $\rho\geq 1/2$.
We proceed with the proof of the desired bound  for $\rho <1/2$.

%
For each $\rho \in (0, 1/2)$, we choose $m$ in \eqref{e:Deltadecomp_recur_intro} so that $m \rho>1$.
Then, taking $L_2$ span norms of both sides of \eqref{e:PR_stoch_process} yields the 
recursive sequence of bounds, 
\begin{equation}
\begin{aligned}
\| A^* \tilthetaPR_{N} \|_{2,s} &\leq   \frac{1}{N}  \|S^\clA_N\|_{2,s}   + \frac{1}{N} \| H_{N+1} \|_{2,s}  + \frac{1}{N} \| J_{N+1} \|_{2,s} 
\\
& \leq \bdd{t:PR_stoch_process}  \frac{1}{N} 
\Big( N^{1-\rho} \| \tilthetaPR_N \|_{2,s}  
+  \sum_{k = 1}^{N-1} k^{-\rho} \| \tilthetaPR_{k} \|_{2,s} \Big) + \bdd{t:PR_stoch_process} N^{-1/2}
\end{aligned}
\label{e:forsub}
\end{equation}
where the last inequality follows from  \Cref{t:PR_stoch_process}, along with $\| S^\circ_N   \|_{2,s} =0  $.

Applying  \Cref{t:BigBounds}~(iii)  (the bound \eqref{e:PR_MSEBound})
gives  $ \| \tilthetaPR_{k} \|_{2,s}^2  \le \Expect[\| \thetaPR_k - \theta^* \|^2 ]  \leq \bdd{t:BigBounds}   \alpha_k^2 $  for $\rho<1/2$.
This bound together with \Cref{t:alpha_avg}~(ii) gives
\begin{equation}
\| A^* \tilthetaPR_{N} \|_{2,s} \leq \bdd{t:PR_stoch_process} \Big(\frac{1}{1-2\rho} + 1 \Big) \alpha_N^2 + \bdd{t:PR_stoch_process} N^{-1/2} \, , \quad \rho < 1/2
\label{e:boundrhohalf}
\end{equation}
completing the proof for $\rho \in [1/4,1/2)$.

If $\rho < 1/4$ then   \eqref{e:boundrhohalf}  gives $\| \tilthetaPR_k \|_{2,s} \leq \bdd{t:PR_vanishingfast} \alpha^2_k$, which is an improvement on  \Cref{t:BigBounds}~(iii).   Substituting this into \eqref{e:forsub} and repeating the above process yields a sequence of recursive bounds similar to \eqref{e:boundrhohalf}: 
\[
\| A^* \tilthetaPR_{N} \|_{2,s} \leq \bdd{t:PR_vanishingfast} \Big(\frac{1}{1-3\rho} +1 \Big) \alpha_N^3 + \bdd{t:PR_stoch_process} N^{-1/2} \, , \quad \rho < 1/4 
\]
which completes the proof for $\rho \in [1/6, 1/4)$. 

Continuing to repeat this process establishes the desired bound   for all $\rho \in (0,1)$.
\end{proof}

Now, we can apply the bounds in \Cref{t:PR_vanishingfast} to \Cref{t:PR_stoch_process}~(ii) 
and conclude that the sequences $\{S^\clA_N, J_{N+1}\}$ do not dominate the asymptotic covariance of the right side of \eqref{e:PR_stoch_process}.

\begin{proof}[Proof of part (iii) of \Cref{t:stats_summary} for the MU settting]   

It follows from \eqref{e:PR_stoch_process} that $\tilthetaPR_N  =\clV_N + \clE_N  $, in which
\begin{equation}
\clV_N = -[A^*]^{-1}\frac{1}{N} H_{N+1} \, , 
\qquad
\clE_N =  -[A^*]^{-1}\frac{1}{N}  (J_{N+1}  - S^\circ_{N}      + S^{\clA}_N    )
\end{equation}
Applying the bound $\| \tilthetaPR_N \|_{2,s} \leq \bdd{t:PR_vanishingfast} N^{-1/2}$ from \Cref{t:PR_vanishingfast} to \Cref{t:PR_stoch_process}~(ii), we obtain
\[
\|   S^{\clA}_N      \|_{2,s} 
\leq \bdd{t:PR_stoch_process} \bdd{t:PR_vanishingfast}  N^{(1-2\rho)/2}
\]
Applying \Cref{t:PR_stoch_process} gives,  for a constant $b$ depending upon $\Psi_0$ and $\rho$,
\[
\begin{aligned}
\|  \clV_N  \|_{2,s} & \leq   b N^{-1/2}
\\
\| \clE_N  \|_{2,s}
& \leq
\| [A^*]^{-1}\|_F \frac{1}{N}  \Big( \|  J_{N+1} \|_{2,s} + \|   S^{\clA}_N     \|_{2,s} \Big)
\\
& \leq b  \max\{   N^{-1+\rho/2}, N^{-1/2-\rho}  \}
\end{aligned}
\]

Applying \Cref{t:Cov_lemma} we obtain
\begin{equation}
\Cov(\tilthetaPR_{N})   =  \tfrac{1}{N}\SigmaPR + \clE_N^\Sigma 
\, ,   
\quad
\trace(
\clE_N^\Sigma  )  \le   \begin{cases}
\bdd{t:stats_summary}N^{-[1+\rho]} \quad &  \rho \leq 1/3
\\
\bdd{t:stats_summary}	N^{- [3 - \rho]/2} \quad&  \rho>1/3
\end{cases}
\label{e:SecondMomentBddLinearSA_cov}
\end{equation}
where $\bdd{t:stats_summary} = 3b^2$.
Multiplying both sides of the above equation by $N$ and taking limits completes the proof:  
%
\[
\lim_{N \to \infty} N  \Cov(\tilthetaPR_N) 
=
\lim_{N \to \infty} N \Cov(\clV_N) 
= [A^*]^{-1}  \Sigma_{\MD^*} [{A^*}^\transpose]^{-1}  
\]
\end{proof}

\begin{proof}[Proof of \Cref{t:stats_summary_better}]
Part (i) follows from taking norms of both sides of \eqref{e:SecondMomentBddLinearSA_bias} and applying the triangle inequality.

The proof of part (ii) begins by taking square roots of both sides of \eqref{e:L2andSpan} to obtain, via the triangle inequality
\[
\| \tilthetaPR_N \|_2  \leq  \sqrt{\trace(\Cov(\tilthetaPR_N))}  + \|   \Expect[ \tilthetaPR_N ]\|
\]
Substituting \Cref{e:SecondMomentBddLinearSA_cov,e:SecondMomentBddLinearSA_bias} into the above equation and using the triangle inequality once more completes the proof.
\end{proof}

\section{Numerical Experiments}
\label{s:exp_proofs}

Here we survey additional results and provide further details on the numerical experiments discussed in \Cref{s:exp}. 

Similarly to what is shown by  \Cref{fig:BiasVar}, \Cref{fig:BiasVar03,fig:BiasVar05} display the empirical mean and variance for the final PR-averaged estimates ${\thetaPR_{N}}^i$ across all independent runs  as functions of $\rho$ for $a \in \{0.3 ,0.5\}$. Additionally, ratios between these empirical statistics and their associated theoretical asymptotic predictions are also shown by \Cref{fig:All_a_comp} for each $a$. Note that there is no comparison with theoretical values for the empirical mean when $a=0.5$, in which case the Markov chain is i.i.d. and bias vanishes very fast, by \Cref{t:stats_summary}~(i).

The results in \Cref{fig:BiasVar03,fig:BiasVar05} agree with the discussion surrounding \Cref{fig:BiasVar}: poor solidarity with theory is observed in both large and small-$\rho$ regimes irrespective of the choice of $a$. Again, this is justified by the  error bounds in \Cref{t:stats_summary_better}.

\whamit{Impact of spectral gap.}   
The transition matrix $P$ has eigenvalues $\{\lambda_1, \lambda_2\} =  \{ 1,   2a-1\} $,  and the spectral gap is thus  $\gamma = 1 -|\lambda_2|  = 1 -| 2a-1| $.   
It is maximized when $a=0.5$ and vanishes if either $a\sim 0$ or $a\sim 1$.   
It is commonly assumed that a small spectral gap is a sign of difficulty in numerical algorithms, yet a glance at  \Cref{t:LinspecEx} shows that: 
\whamrm{(i)}   While it is true that $\barUpupsilon^* $  increases without bound when $a\uparrow 1$,   when $a\downarrow 0$ the spectral gap vanishes yet $\barUpupsilon^* $ converges to a finite value.
\whamrm{(ii)}   The asymptotic covariance \textit{vanishes} as $a\downarrow 0$, even as the spectral gap vanishes.
In conclusion:
long statistical memory is not always detrimental to algorithmic performance.  

The above conclusion is in agreement to what can be observed by the results in \Cref{fig:All_a_comp} since the empirical covariance for $a=0.3$ is closer to optimal than for the i.i.d. case ($a=0.5$). Moreover, the setting  $a=0.3$ also outperforms $a=0.7$ in terms of empirical variance even though these two choices result in the same spectral gap. 

When it comes to bias, it appears that $a=0.3$ and $a=0.5$ have similar performances: based upon \Cref{t:stats_summary}~(i), bias for the i.i.d. case vanishes very fast,  so the low magnitude observed for the empirical mean with $a=0.5$ is expected. The curve associated with $a=0.3$ seems to track the prediction in \Cref{t:stats_summary}~(ii) well only facing problems in the large $\rho$ regime. Tracking for case $a=0.7$ is not as good as when $a=0.3$ but also appears to be consisted of with theory. However, tracking is poor in both the low and large $\rho$ regimes.

\begin{figure}[h]
\centering
\includegraphics[width=1\textwidth]{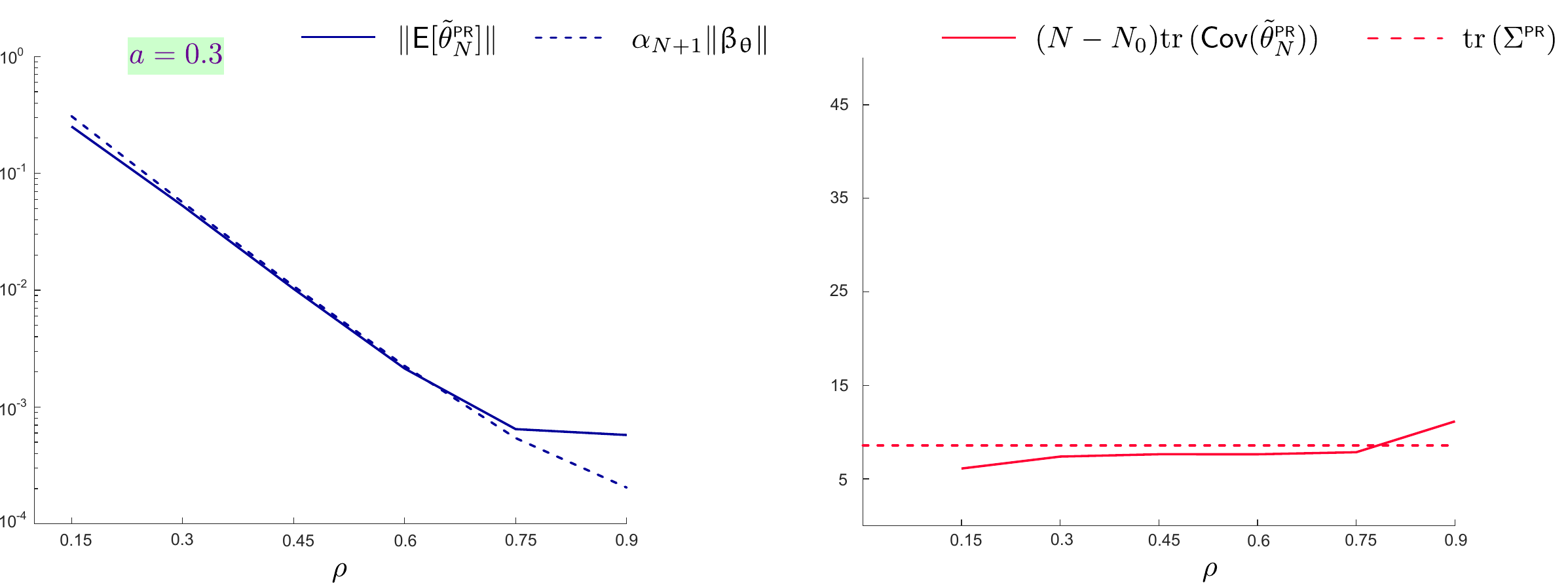}
\caption{Empirical and theoretical mean and variance with $a=0.3$.}
\label{fig:BiasVar03}
\end{figure}

\begin{figure}[h]
\centering
\includegraphics[width=1\textwidth]{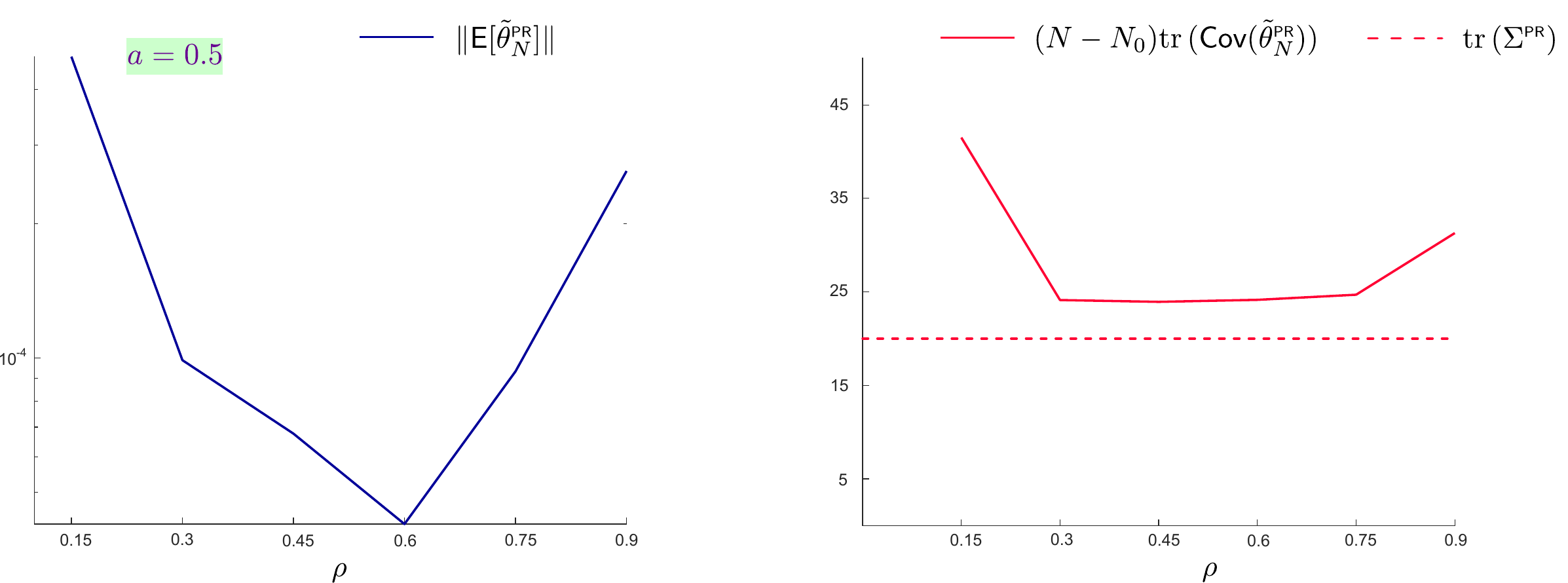}
\caption{Empirical and theoretical mean and variance with $a=0.5$.}
\label{fig:BiasVar05}
\end{figure}

\begin{figure}[h]
\centering
\includegraphics[width=1\textwidth]{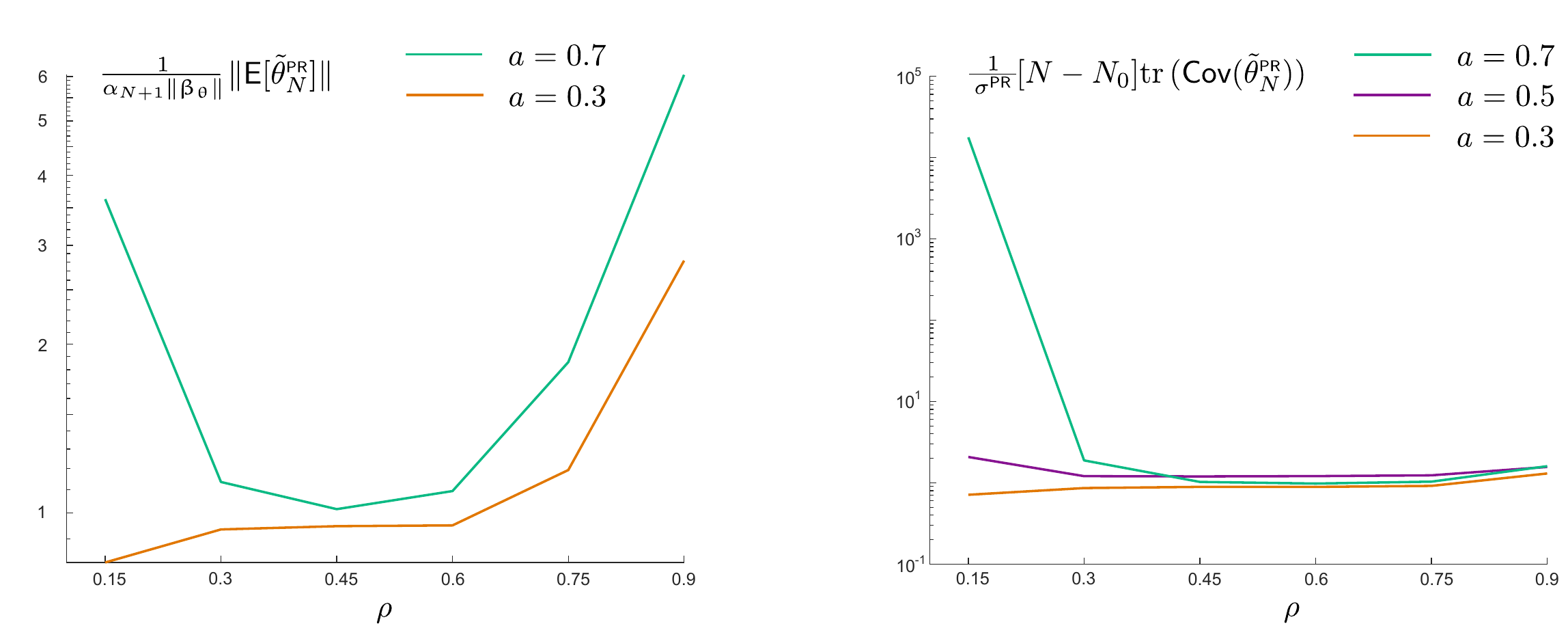}
\caption{Ratios between empirical and theoretical mean and variance for different choices of $a$.}
\label{fig:All_a_comp}
\end{figure}

\whamit{Identifying the constants in   \eqref{e:chezhadoaclamag22}.}  
The example considered in \Cref{s:exp} is of the form \eqref{e:model_SIVA} of  \cite{chezhadoaclamag22}, in which $X_k =\Phi_k$ and $ \MD^0_{k+1}\equiv 0$.  
This prior work  requires a quadratic Lyapunov function for the mean-flow vector field    (see discussion in \Cref{s:litreview}).
The values in \eqref{e:expchoices} were selected for the  model  \eqref{e:Affine_model_exp} so that $V(\theta) =\half \| \theta\|^2$  satisfies \eqref{e:Lyap_SIVA} with $c_0=1$.        The final term in  \eqref{e:chezhadoaclamag22} is thus
\[
L = 520 \alpha_0 (\|\theta^* \| + 1)^2 
\]

We conclude this section with a proof of \Cref{t:LinspecEx}, followed by details on the computing resources used to perform the experiments in \Cref{s:exp}.

\begin{proof}[Proof of \Cref{t:LinspecEx}]
Let $\hag:\state \to \Re$ denote the solution to Poisson's equation \eqref{e:bigfish} with forcing function $g(x)\equiv x$. Since $\uppi$ is uniform, Poisson's equation equation gives for each $x \in \state$,
\[
\Expect[\hag(\Phi_{k+1}) | \Phi_k =x] =  a \hag(x) + (1-a) \hag(\barx)   =\hag(x) -x + \tfrac{1}{2}
\]
where $\barx = \state \setminus \{x\}$. Solutions to \eqref{e:fish-h} are unique up to an additive constant, so let $\hag(1) =1$. Then, substituting $x=1$ into the above equation gives $\hag(0) = \tfrac{1}{2} \frac{1-2a}{1-a}$.
\begin{subequations}
It follows that $f(\theta,x)$ can be re-written as 
\begin{equation}
f(\theta,x) = [x(A^1 - A^0) + A^0] \theta - [x(b^1 - b^0) + b^0]
\end{equation}
which gives
\begin{align}
\haf(\theta,x) &= \hag(x)[(A^1 - A^0) \theta - (b^1 - b^0)] 
\\
\partial_\theta \haf(\theta,x) &= \hag(x)(A^1 - A^0)  
\end{align}
\label{e:def_linex}
\end{subequations}
where again we have used the fact that solutions to Poisson's equation are unique up to an additive constant.

The definition of $\Upupsilon^*$ in \eqref{e:Upstar} implies 
\[
\begin{aligned}
\Upupsilon^*_{k+1} &= ( [\Phi_{k+1}  -\hag(\Phi_{k+1})]  [A^1 - A^0] ) (A_{k+1} \theta^* - b_{k+1})
\\
\barUpupsilon^* = \Expect[\Upupsilon^*_{k+1}] &=  - \tfrac{1}{2}   (A^1 - A^0)\Big[ (A^0\theta^* - b^0)   \Expect[\hag(\Phi_{k+1}) |\Phi_{k} = 0 ]  +     (A^1\theta^* - b^1) (\Expect[\hag(\Phi_{k+1}) |\Phi_{k} = 1 ] -1)       \Big]
\\
& =  - \tfrac{1}{2} (A^1 - A^0)  \Big[ (A^0\theta^* - b^0)  \Big( \hag(0) -g(0)+\tfrac{1}{2}\Big)   +     (A^1\theta^* - b^1)  \Big( \hag(1) - g(1)+\tfrac{1}{2}\Big)       \Big]
\end{aligned}
\]
Part (i) is established upon using this with the following identity, obtained from \eqref{e:barf_linexp}:
\begin{equation}
0 = \barf(\theta^*) = \tfrac{1}{2} [(A^0+A^1) \theta^* - b^0-b^1 ] 
\label{e:barfstar_linex}
\end{equation}

For part (ii), it follows from \eqref{e:AsymptCLT_fish} that we can write the asymptotic covariance $\Sigma_\Delta$ as
\[
\Sigma_\Delta = \Expect_\uppi[  \Delta_k^*  {\haDelta_k^{*\transpose }}
+  \haDelta_k^*  {\Delta_k^*}^\transpose
-	\Delta_k^*  {\Delta_k^*}^\transpose	 ]
\]
where $\Delta_k^* = f(\theta^*,\Phi_{k})$ and $\haDelta_k^* = \haf(\theta^*,\Phi_{k})$. 
The law of total expectation and the definitions in \eqref{e:def_linex} give
\[
\begin{aligned}
\Expect_\uppi[\haDelta_k^*  {\Delta_k^*}^\transpose]
& =  \tfrac{1}{2} (\Expect_\uppi[\haDelta_k^*  {\Delta_k^*}^\transpose| \Phi_{k} = 0] +  \Expect_\uppi[\haDelta_k^*  {\Delta_k^*}^\transpose| \Phi_{k} = 1] )
\\
&= \tfrac{1}{2} [(A^1 - A^0) \theta^* - (b^1 - b^0)]  [ \hag(0)   (A^0\theta^* -b^0)^\transpose 
+\hag(1)    (A^1\theta^* -b^1)^\transpose ]
\\
\Expect_\uppi[	\Delta_k^*  {\Delta_k^*}^\transpose ] 
& = \tfrac{1}{2} (\Expect_\uppi[\Delta_k^*  {\Delta_k^*}^\transpose| \Phi_{k} = 0] +  \Expect_\uppi[\Delta_k^*  {\Delta_k^*}^\transpose| \Phi_{k} = 1] )
\\
&= \tfrac{1}{2} [(A^0\theta^* -b^0)(A^0\theta^* -b^0)^\transpose +  
(A^1\theta^* -b^1)(A^1\theta^* -b^1)^\transpose ] 
\end{aligned}
\]
Again, using \eqref{e:barfstar_linex} we obtain 
\[
\Expect_\uppi[\haDelta_k^*  {\Delta_k^*}^\transpose] = -(A^0\theta^*)(A^0\theta^*)^\transpose (\hag(0) - \hag(1))
\, , \quad 
\Expect_\uppi[	\Delta_k^*  {\Delta_k^*}^\transpose ]  = (A^0\theta^*)(A^0\theta^*)^\transpose
\] 
which completes the proof of (ii).
\end{proof}

\clearpage


\end{document}